\documentclass[a4paper,12pt]{amsart}
\font\s=cmb9
\usepackage{latexsym}
\usepackage{amssymb}
\usepackage{graphicx}
\usepackage{amsthm}
\usepackage{epsfig}
\RequirePackage{color}
\usepackage[normalem]{ulem}
\usepackage{float}
\usepackage{calrsfs}
\DeclareMathAlphabet{\pazocal}{OMS}{zplm}{m}{n}

\usepackage{amscd,enumerate}
\usepackage{amsfonts}

\input xy
\xyoption{all}

\usepackage{multicol}
\usepackage{hyperref}
\hypersetup{ colorlinks,
linkcolor=blue,
filecolor=green,
urlcolor=blue,
citecolor=blue}


\setcounter{secnumdepth}{4}

\newtheorem{theorem}{Theorem}[section]
\newtheorem{lemma}[theorem]{Lemma}
\newtheorem{corollary}[theorem]{Corollary}
\newtheorem{proposition}[theorem]{Proposition}

\theoremstyle{definition}
\newtheorem{definition}[theorem]{Definition}

\newtheorem{remark}[theorem]{Remark}

\numberwithin{equation}{section}



\def\Z{\mathbb Z}   
\def\N{\mathbb N}
\def\clz{\mathop{\mathcal C}}
\def\T{\mathop{\tau}}
\def\path{\mathop{\hbox{Path}}}
\addtolength{\textwidth}{4cm} \addtolength{\oddsidemargin}{-2cm} \addtolength{\evensidemargin}{-2cm}
\textheight=22.15truecm
\def\LPA{L_\K(E) }
\def\path{\mathop{\hbox{Path}} }
\def\K{{\mathbb K}}
\definecolor{turquoise2}{rgb}{0,0.898039,0.933333}
\definecolor{magenta}{rgb}{1,0,1}
\def\L{\mathop{\hbox{$\Lambda$}}}
\def\a{\alpha}
\def\b{\beta}
\def\m{\mu}
\def\l{\lambda}

\begin{document}

\title{On the centroid  of a Leavitt path algebra}

\author[D. Gon\c{c}alves]{Daniel Gon\c{c}alves}
\address{Departamento de Matem\'{a}tica - UFSC - Florian\'{o}polis - SC, Brazil}
\email{daemig@gmail.com}
\thanks{D. Gon\c{c}alves was partially supported by Conselho Nacional de Desenvolvimento Cient\'ifico e Tecnol\'ogico (CNPq) grant numbers 304487/2017-1 and 406122/2018-0  and Capes-PrInt grant number 88881.310538/2018-01 - Brazil. The last three authors are supported by the Junta de Andaluc\'{\i}a  through projects  FQM-336 and UMA18-FEDERJA-119 and  by the Spanish Ministerio de Ciencia e Innovaci\'on   through project  PID2019-104236GB-I00,  all of them with FEDER funds.}

\author[D. Mart\'{\i}n]{Dolores Mart\'{\i}n Barquero}
\address{Departamento de Matem\'atica Aplicada, Universidad de M\'alaga, Espa\~na. }
\email{dmartin@uma.es}

\author[C. Mart\'{\i}n]{C\'andido Mart\'{\i}n Gonz\'alez}
\address{Departamento de \'Algebra, Geometr\'{\i}a y Topolog\'{\i}a, Universidad de M\'alaga, Espa\~na.}
\email{candido\_m@uma.es}

\author[M. Siles]{Mercedes Siles Molina}
\address{Departamento de \'Algebra, Geometr\'{\i}a y Topolog\'{\i}a, Universidad de M\'alaga, Espa\~na.}
\email{msilesm@uma.es}

\subjclass[2010]{Primary 16D70, 16W99, 16S99}



\keywords{Leavitt Path algebra, Centroid}

\begin{abstract}
We describe the centroid  of some Leavitt path algebras. More precisely, we show that for Leavitt path algebras over a field $\K$ that are simple its centroid is isomorphic to $\K$, and for prime Leavitt path algebras its centroid is isomorphic to  $\K$  except if the graph is a row-finite comet, in which case the centroid is isomorphic to $\K[x,x^{-1}]$. \end{abstract}

\maketitle

\section{Introduction}

The center of an algebra and its generalizations (as the centroid and extended centroid) play an important role in ring theory. For example, the center is preserved by Morita equivalence, that is, if two algebras are Morita equivalent their centers are isomorphic (this follows from the fact that the center of an algebra can be derived from the category of its modules). For strongly prime rings (see \cite[Theorem 2.1]{Kau}) it is known that Morita equivalent rings have isomorphic extended centroids (see \cite[Theorem 2.5]{Kau}). But as far as we know a definitive answer to the relation between the centroids of Morita equivalent (non-unital) rings is still not clear. 
The center also plays a key role in determining simplicity of (partial) skew group rings, see \cite{BGOD, DG, GOR, Oinert}. When studying prime rings satisfying a generalized polynomial identity the extended centroid plays an important role, see \cite{Mart}. In Lie theory , the centroid appears in a number of papers about affine Lie algebras and also in relation to triple systems (\cite{BN}, \cite{P}). It has been also intensively studied in Jordan theory (\cite{mccrimmon}) and recently in the theory of evolution algebras (\cite{labra}).

The description and study of the center of Leavitt path algebras has started with the description of the center for simple Leavitt path algebras (\cite{GK}), followed by the description for prime Leavitt path algebras (\cite{CMMSS1}, for row finite algebras (\cite{CMMSS}), and finally for arbitrary Leavitt path algebras (\cite{BH}). In \cite{CMMS} the center is studied using the description of Leavitt path algebras as Steinberg algebras (see \cite{Ben} for the definition of Steinberg algebras). 

As we will see below the center of a Leavitt path algebra associated to a finite graph coincides with its centroid. But for infinite graphs very often the center of a Leavitt path algebra is zero, while the centroid is not. Therefore the centroid carry aditional information about the algebra that can not be extracted from the center, and it is relevant to describe then for Leavitt path algebras.

We organize our work as follows. After this brief introduction, we include a section of preliminaries and notation regarding Leavitt path algebras and centroids. We also include in this section some initial results that will be used along the text. In Section~3 we compute the centroid of a simple Leavitt path algebra and we devote Section~4 to a study of centroids via direct (inverse) limits, which we apply to describe the centroid of certain Leavitt path algebras. We study the centroid of the remaining prime Leavitt path algebras in Section~5. Finally, in Section~6 we summarize our results in the main theorem of the paper.

\section{Preliminaries}

\subsection{Leavitt path algebras}
 A \emph{directed graph} consists of a $4$-tuple $E=(E^0, E^1, r_E, s_E)$ 
consisting of two disjoint sets $E^0$, $E^1$ and two maps
$r_E, s_E: E^1 \to E^0$. The elements of $E^0$ are the \emph{vertices} and the elements of 
$E^1$ are the \emph{edges} of $E$.  Further, for $e\in E^1$, $r_E(e)$ and $s_E(e)$ are 
called the \emph{range} and the \emph{source} of $e$, respectively. 
If there is no confusion with respect to the graph we are considering, we simply write $r(e)$ and $s(e)$.

A vertex $v$ such that $s^{-1}(v) = \emptyset$ is called a \emph{sink};
$v$ is called an \emph{infinite emitter} if $s^{-1}(v)$ is an infinite set.  Otherwise, a vertex
 that is neither a sink nor an infinite emitter is called a \emph{regular vertex}.
The  set of infinite emitters will be denoted by ${\rm Inf}(E)$ while ${\rm Reg}(E)$ will denote the set of regular vertices.

In order to define the Leavitt path algebra, we need to introduce the  
{\it extended graph of} $E$.  This is the graph 
$\widehat{E}=(E^0,E^1\cup (E^1)^*, r_{\widehat{E}}, s_{\widehat{E}}),$ where
$(E^1)^*=\{e_i^* \ | \ e_i\in  E^1\}$ and the functions $r_{\widehat{E}}$ and $s_{\widehat{E}}$ are defined as 
\[{r_{\widehat{E}}}_{|_{E^1}}=r,\ {s_{\widehat{E}}}_{|_{E^1}}=s,\
r_{\widehat{E}}(e_i^*)=s(e_i), \hbox{ and }  s_{\widehat{E}}(e_i^*)=r(e_i).\]
The elements of $E^1$ { are } called \emph{real edges}, while {for each  $e\in E^1$ we} call $e^\ast$ a
\emph{ghost edge}.    

A nontrivial \emph{path} $\mu$ in a graph $E$ is a finite sequence of edges { $\mu=e_1\dots e_n$
such that $r(e_i)=s(e_{i+1})$} for $i=1,\dots,n-1$.

In this case, {$s(\mu):=s(e_1)$ and $r(\mu):=r(e_n)$} are the
\emph{source} and \emph{range} of $\mu$, respectively, and $n$ is the \emph{length} of $\mu$, denoted $|\mu|$. We also say that
$\mu$ is {\emph{a path from $u:=s(e_1)$ to $v:=r(e_n)$}} and write $u\geq v$. 
We  {write  $\mu^0$ for} the set of the vertices which are sources or ranges of the edges appearing in the expression of $\mu$, i.e.,
$\mu^0:=\{s(e_1),r(e_1),\dots,r(e_n)\}$. 

We view an element $v$ of $E^{0}$ as a path of length $0$. In this case $s(v)=r(v)=v$. 
The set of all (finite) paths of a graph $E$ is denoted by ${\rm Path}(E)$.  We will use also the notation $\hbox{Path}(E)^*$ with the meaning 
$\hbox{Path}(E)^*:=\{\lambda^*\ \vert \ \lambda\in \hbox{Path}(E)\}$.
We define a \emph{walk} in $\LPA$ as an element
of the form $\a\b^*$ where $\a,\b\in\path(E)$ with $r(\a)=r(\b)$. If $\mu$ is a path in $E$, and if $v=s(\mu)=r(\mu)$, then $\mu$ is called a \emph{closed path based at $v$}.
If $s(\mu)=r(\mu)$ and $s(e_i)\neq s(e_j)$ for every $i\neq j$, then $\mu$ is called a \emph{cycle}. An edge $e$ is an {\it exit} for a path $\mu = e_1 \dots e_n$ if there exists $i\in \{1, \dots, n\}$ such that
$s(e)=s(e_i)$ and $e\neq e_i$.

In this paper we will consider $\N=\{0, 1,\dots\}$ and $\K$ will denote a field.

 An \emph{infinite path} is an infinite sequence of edges $\lambda=f_0f_1\dots$ such that
$r(f_i)=s(f_{i+1})$ for every $i\in \N$.  We define the source of an infinite path to be $s(\lambda):=s(f_1)$.
 We denote the set of all infinite paths by $E^{\infty}$. 

\medskip

Given a (directed) graph $E$ and a commutative unital ring  $R$, the {\it path $R$-algebra} of $E$,
denoted by $RE$, is defined as the free associative $R$-algebra generated by the
set of paths of $E$ with relations:
\begin{enumerate}
\item[(V)] $vw= \delta_{v,w}v$ for all $v,w\in E^0$.
\item [(E1)] $s(e)e=er(e)=e$ for all $e\in E^1$.
\end{enumerate}
The {\it Leavitt path algebra of} $E$ {\it with coefficients in} $R$, denoted $L_R(E)$, 
is the quotient of the path algebra $R\widehat{E}$ by the ideal of $R\widehat{E}$ generated by the relations:
\begin{enumerate}    
\item[(CK1)] $e^*e'=\delta _{e,e'}r(e) \ \mbox{ for all } e,e'\in E^1$.
\item[(CK2)] $v=\sum _{\{ e\in E^1\mid s(e)=v \}}ee^* \ \ \mbox{ for every}\ \ v\in  {\rm Reg}(E).$
\end{enumerate}
Observe that in $R\widehat{E}$ the relations (V) and (E1) remain valid and that the following is also satisfied:
\begin{enumerate}
\item [(E2)] $r(e)e^*=e^*s(e)=e^*$ for all $e\in E^1$.
\end{enumerate}
It is not difficult to show that
\[L_R(E) = \operatorname{span} \{\alpha\beta^{\ast } \ \vert \ \alpha, \beta \in {\rm Path}(E)\}.\]
and that $L_{R}(E)$ is a $\mathbb{Z}$-graded $R$-algebra, where
for each $n\in\mathbb{Z}$, the degree $n$-component $L_{R}(E)_{n}$ is spanned by the set
\[\{\alpha \beta^{\ast }\ \vert \  \alpha, \beta \in {\rm Path}(E)\ \hbox{and}\  |\alpha|-|\beta|=n\}.\] 

%
%
%

For vertices $u, v$ we say $u \ge v$ whenever there is a path $\mu$ such that $s(\mu)=u$ and $r(\mu)=v$, 
A subset $H$ of $E^0$ is called \emph{hereditary} if $v\ge w$ and $v\in H$ imply $w\in H$. A
hereditary set is \emph{saturated} if every regular vertex which feeds into $H$ and only into $H$ is again
in $H$, that is, if $s^{-1}(v)\neq \emptyset$ is finite and $r(s^{-1}(v))\subseteq H$ imply $v\in H$. Given a vertex $u\in E^0$, the \emph{tree} of $u$, denoted by $T(u)$, is the set:
$$T(u)= \{v \in E^0 \ \vert u \geq v\}.$$
For a subset of vertices $X$, the \emph{tree} of $X$, denoted by $T(X)$, is the set:
$$T(X)=\bigcup_{u\in X}T(u).$$

For a set $X\subseteq E^0$, the \emph{hereditary and saturated closure of} $X$, denoted $\overline X$, is the smallest hereditary and saturated subset of $E^0$ containing $X$. It can be described in the following way (see \cite[Lemma 2.0.7]{AAS}): Let $\Lambda^0(X) := T(X)$ and $\Lambda^{n+1}(X):=\{v \in \text{Reg}(E^0): r(s^{-1}(v))\in \Lambda^n(X)\}\cup \Lambda^n(X)$. Then $\overline{X}=\cup_{n\geq 0} \Lambda^n(X)$. If there is no confusion with respect to the set $X$ we are considering, we simply write $\Lambda^{n}$.
The set of all hereditary and saturated subsets of $E^0$ is denoted by $\mathcal{H}_E$.
Following \cite[Definition 3.2]{AAS1}, we say that a graph $E$ is a \emph{comet} if it has exactly one cycle $c$ with $T (v) \cap c^0\ne \emptyset$ for every vertex $v \in E^0$, and every infinite path ends in the cycle $c$.
For the definitions not included in the paper, we refer the reader to \cite{AAS}. 
\medskip

\begin{remark}\label{basetipica}
Consider a Leavitt path algebra $L_\K(E)$ and fix a basis $B$ as specified in \cite[Theorem 1, Section 3]{Kirillov} or \cite[Corollary 1.5.12]{AAS}. An element $z=\sum_i k_i\a_i\b_i^*\in L_\K(E)$, where $k_i\in \K$ and the elements $\a_i\b_i^*$ belong to $B$, will be said to be written in {\em normal form relative to} $B$. We also speak about the {\em normal expression} of $z$. 
\end{remark}

\begin{definition}
Consider a Leavitt path algebra $L_\K(E)$ and fix a basis $B$ as explained in Remark \ref{basetipica}. Define $\partial_B\colon L_\K(E)\to\N$
as follows: 
\begin{enumerate}[\rm (i)]
    \item $\partial_B(\a)=\hbox{length}(\a)$ for $\a\in\path(E)$,
    \item $\partial_B(\a\b^*)=\partial_B(\a)$ for any $\a\b^*\in B$, 
    \item$\partial_B(z)=\max(\partial_B(\a_i\b_i^*))$, where $z=\sum k_i \a_i\b_i^*$ is the normal expression of $z$ relative to $B$.
\end{enumerate}
\end{definition}

\subsection{The centroid of an algebra}
Recall that the \emph{centroid} of a $\K$-algebra $A$, denoted by $\clz(A)$, is the $\K$-vector space
of all linear maps $\T\colon A\to A$ such that 
$$\T(xy)=\T(x)y=x\T(y),$$
for any $x,y\in A$. The centroid is also a $\K$-algebra under composition. In particular, if $A^2=A$ then $\clz(A)$ is commutative. The elements of $\clz(A)$  will be called \emph{centralizers}. For any centralizer $\T\in \clz(A)$ its kernel and image are both ideals of $A$. Thus, in the case of a simple algebra, each $\T\in \clz(A)$ is invertible and the centroid $\clz(A)$ is a field.
Notice that there is a homomorphism $L\colon Z(A)\to \clz(A)$ such that $a\mapsto L_a$ (the left multiplication operator), where $Z(A)$ denotes the center of $A$. If $\hbox{Lann}(A)=0$ then the previous map $L$ is a monomorphism (recall that $\hbox{Lann}(A):=\{x\in A\colon xA=0\}$), and if $A$ is unital then $L$ is actually an isomorphism. In particular for the Leavitt path algebra associated to a finite graph, we have that
\begin{equation}\label{ugna}
Z(L_\K(E))=\clz(L_\K(E)),
\end{equation}
but for infinite graphs, very frequently we encounter the situation
$Z(L_\K(E))=0$ and $\clz(L_\K(E))\ne 0$.


\begin{proposition}{\rm (First extension property).}\label{platano}
Let $E$ be an arbitrary graph and $\K$ a field.  
Let $\T\colon E^0\to L_\K(E)$ be a map satisfying
$\T(s(f))f=f \T(r(f))$ and $\T(r(f))f^*=f^* \T(s(f))$ for any $f\in E^1$. Then there is a unique centralizer of $L_\K(E)$ whose
restriction to $E^0$ is $\T$.
\end{proposition}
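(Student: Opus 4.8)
The plan is to prove uniqueness and existence separately, with the whole difficulty concentrated in showing that a map defined through the \emph{left} source vertices is automatically compatible with the \emph{right} source vertices. For uniqueness, recall that $L_\K(E)$ has local units given by the finite sums of distinct vertices: every $x\in L_\K(E)$ satisfies $ux=x$ for $u=\sum_{v\in F}v$, where $F$ is the finite set of vertices $v$ with $vx\neq 0$. If $\sigma$ is any centralizer extending $\T$, then $\sigma(x)=\sigma(ux)=\sigma(u)x=\big(\sum_{v\in F}\T(v)\big)x$, so $\sigma$ is completely determined by the values of $\T$ on $E^0$. Note also that the centralizer identity forces $\T(v)=\T(v\cdot v)=\T(v)v=v\T(v)$ for each vertex $v$, so any $\T$ admitting an extension necessarily takes values $\T(v)\in vL_\K(E)v$; I will use this compatibility throughout.

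For existence, I would define the extension on the monomial basis $B$ of Remark~\ref{basetipica} by $\T(\a\b^*):=\T(s(\a))\,\a\b^*$ and extend it $\K$-linearly; equivalently $\T(x)=\T(u)x$ for any left local unit $u$ of $x$, which is well defined because $B$ is graded by the source vertex and $\T(v)=\T(v)v$ kills the superfluous terms. Before checking the centralizer axioms I would establish two propagation lemmas by induction on the length: from the first hypothesis, $\T(s(\m))\m=\m\,\T(r(\m))$ for every path $\m$; and from the second hypothesis, $\T(r(\m))\m^*=\m^*\,\T(s(\m))$ for every path $\m$. Since a walk $\a\b^*$ satisfies $r(\a)=r(\b)$, these combine into the key commutation identity
$$\T(s(\a))\,\a\b^*=\a\,\T(r(\a))\,\b^*=\a\,\T(r(\b))\,\b^*=\a\b^*\,\T(s(\b)).$$

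With this identity in hand, the verification of the centralizer property splits in two. The equality $\T(xy)=\T(x)y$ is the easy half: taking $x=\a\b^*$, one has $\T(x)y=\T(s(\a))(xy)$, and since $s(\a)(xy)=xy$ the normal expression of $xy$ is supported on monomials with source vertex $s(\a)$, whence $\T(s(\a))(xy)=\T(xy)$ directly from the definition of the extension. The equality $\T(xy)=x\,\T(y)$ is the crux, and the place where I expect the real obstacle: taking $y=\gamma\delta^*$, the commutation identity lets me rewrite $x\,\T(y)=x\,\gamma\delta^*\,\T(s(\delta))=(xy)\,\T(s(\delta))$, after which I use $(xy)s(\delta)=xy$ to see that the normal form of $xy$ is supported on monomials whose \emph{right} source is $s(\delta)$, and apply the commutation identity termwise to recognize $(xy)\,\T(s(\delta))=\T(xy)$.

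In short, the hard point is precisely that the two edge hypotheses are exactly the compatibility needed to force the left-source definition to coincide with the right-source one; once the commutation identity is secured, both centralizer equations follow by bookkeeping, and the restriction to $E^0$ recovers the original map since $\T(v)=\T(s(v))v=\T(v)v=\T(v)$. Uniqueness then finishes the argument.
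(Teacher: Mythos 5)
Your proof is correct and follows essentially the same route as the paper's: the same induction on path length giving $\T(s(\m))\m=\m\T(r(\m))$ and $\T(r(\m))\m^*=\m^*\T(s(\m))$, the same walk identity $\T(s(\a))\a\b^*=\a\b^*\T(s(\b))$, and the same linear extension $\a\b^*\mapsto\T(s(\a))\a\b^*$ over the basis of Remark \ref{basetipica}, with your local-unit uniqueness argument and your normal-form support checks supplying exactly the details the paper compresses into ``it is not difficult to check.'' The one caveat is that your standing assumption $\T(v)\in vL_\K(E)v$ does not follow from the edge hypotheses --- you justify it by supposing an extension exists, which is strictly circular in the existence half --- but the paper's own proof tacitly needs the same condition to conclude $\sigma\vert_{E^0}=\T$ (without it the statement is false as written; consider two isolated vertices with $\T$ interchanging them), so this is a missing hypothesis in the proposition itself, satisfied in every application in the paper, rather than a divergence between your argument and the paper's.
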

\begin{proof} By induction it is easy to prove that, for any $\lambda, \mu \in {\rm Path}(E)$, 
$$\T(s(\lambda))\lambda=\lambda \T(r(\lambda)) $$
$$\T(r(\lambda))\lambda^* =\lambda^* \T(s(\lambda)).$$
From here it follows that, for any walk $\l\m^\ast$,
$$\T(s(\lambda))\lambda\mu^*=\lambda\mu^* \T(s(\mu)).$$
Taking into account that any element of $L_\K(E)$ can be written, in normal form relative to a fixed basis, as a linear combination of walks, by linearity we may extend $\tau$ to
$\sigma\colon L_\K(E)\to L_\K(E)$ as the linear map such that for any element $\lambda\mu^*$ in the above basis $\sigma(\lambda\mu^*):=\T(s(\lambda))\lambda\mu^*$.
Now, it is not difficult to check that $\sigma\in \clz(L_\K(E))$ and $\sigma\vert_{E^0}=\T$. 
\end{proof} 

\medskip


\begin{lemma}\label{rap}
Let $E$ be a graph and $\K$ an arbitrary field. Assume $u\in E^0$ is not the base of a cycle and fix a basis $B$ of $L_\K(E)$ as in Remark \ref{basetipica}. For $z\in Z(uL_\K(E)u)$ we have a normal expression $z=ku+\sum f\xi_f f^*$, $k\in \K$, (the sum being extended to all $f\in s^{-1}(u)$ and the $\xi_f$'s are elements in $Z\left(r(f)L_\K(E)r(f)\right)$, all of them zero except for finitely many of them).
\end{lemma}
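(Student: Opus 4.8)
The plan is to work entirely inside the corner algebra $uL_\K(E)u$, to write the central element $z$ in normal form relative to $B$, and then to exploit that $z$ must commute with the orthogonal idempotents $ff^*$ attached to the edges leaving $u$. First I would record the shape of elements of the corner: $uL_\K(E)u$ is spanned by the basis walks $\alpha\beta^*$ with $s(\alpha)=s(\beta)=u$ and $r(\alpha)=r(\beta)$. Here the hypothesis that $u$ is not the base of a cycle enters through a short preliminary observation: a closed path of minimal positive length based at $u$ is forced to be a cycle based at $u$, so there are \emph{no} closed paths based at $u$ at all. Consequently, if one of $\alpha,\beta$ is trivial, then $r(\alpha)=r(\beta)=u$ makes the other a closed path based at $u$, hence also trivial. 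Thus in the normal expression $z=ku+\sum_i k_i\alpha_i\beta_i^*$ the term $u=uu^*$ is the only one involving a path of length $0$, and every surviving $\alpha_i,\beta_i$ has positive length; write $\alpha_i=f_i\alpha_i'$ and $\beta_i=g_i\beta_i'$ with $f_i,g_i\in s^{-1}(u)$.

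The core step is to prove $f_i=g_i$ for every surviving term, which is exactly what permits grouping by first edge. For each $f\in s^{-1}(u)$ the element $p_f:=ff^*$ lies in $uL_\K(E)u$, and the $p_f$ are orthogonal idempotents, so $z$ commutes with each of them. A direct computation using (CK1) gives $zp_f=kp_f+\sum_{g_i=f}k_i\alpha_i\beta_i^*$ and $p_fz=kp_f+\sum_{f_i=f}k_i\alpha_i\beta_i^*$, where in both cases the surviving summands are precisely the original positive-length basis walks of $z$. Subtracting $zp_f=p_fz$ cancels the $kp_f$ terms and leaves an equality between linear combinations of genuine basis walks: a walk on the left has $\beta$ beginning with $f$ while $\alpha$ does not, and on the right the roles are reversed, so the two families of basis walks are disjoint. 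Linear independence of $B$ then forces every term with $f_i\ne g_i$ to have zero coefficient. Letting $f$ range over $s^{-1}(u)$ kills all terms with $f_i\ne g_i$, so $z=ku+\sum_{f\in s^{-1}(u)} f\xi_f f^*$ with $\xi_f:=\sum_{f_i=g_i=f}k_i\,\alpha_i'(\beta_i')^*$, a finite sum.

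It remains to identify $\xi_f$. Source/range bookkeeping shows that $\alpha_i'(\beta_i')^*$ is a walk based at $r(f)$, so $\xi_f\in r(f)L_\K(E)r(f)$. For centrality I would use the corner embedding $w\mapsto fwf^*$ of $r(f)L_\K(E)r(f)$ into $uL_\K(E)u$, with inverse $y\mapsto f^*yf$; indeed $f(r(f)L_\K(E)r(f))f^*=p_f\bigl(uL_\K(E)u\bigr)p_f$. Given any $w\in r(f)L_\K(E)r(f)$, the element $fwf^*$ lies in $uL_\K(E)u$, so $z(fwf^*)=(fwf^*)z$. Expanding both sides with (CK1) and the identities $r(f)\xi_f=\xi_f=\xi_f r(f)$, $r(f)w=w=wr(f)$ reduces them to $f(kw+\xi_f w)f^*$ and $f(kw+w\xi_f)f^*$ respectively; applying $f^*(\cdot)f$ cancels $f$ and $f^*$ and yields $\xi_f w=w\xi_f$. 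As $w$ was arbitrary, $\xi_f\in Z\bigl(r(f)L_\K(E)r(f)\bigr)$, completing the argument.

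\textbf{Main obstacle.} The delicate point is keeping the coefficient comparison legitimate. Because (CK2) can make $u$ and the various $ff^*$ linearly dependent, one cannot naively read off the coefficient of $p_f$; the argument sidesteps this by arranging the identical $kp_f$ terms on both sides of $zp_f=p_fz$ to cancel, so that only the positive-length basis walks—where independence in $B$ is unambiguous—are actually compared. Everything else is careful (CK1)/(CK2) bookkeeping needed to bring $zp_f$, $p_fz$, $z(fwf^*)$ and $(fwf^*)z$ into normal form, together with the elementary fact, used at the outset, that the absence of a cycle based at $u$ rules out all closed paths based at $u$.
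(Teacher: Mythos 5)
Your proof is correct and follows essentially the same route as the paper's: write $z$ in normal form grouped by the first edges of the $\alpha_i$ and $\beta_i$ (using that $u$ is not the base of a cycle to rule out terms with exactly one trivial factor), kill the cross terms by commuting $z$ with the idempotents $ff^*$, and obtain centrality of $\xi_f$ by commuting $z$ with $fwf^*$. The only divergence is bookkeeping: the paper eliminates the cross terms by multiplying $\sum_{f\ne h} f\xi_{f,h}h^* = \sum_{g\ne h} h\xi_{h,g}g^*$ on the right by $h$, whereas you cancel the $kff^*$ terms and invoke linear independence of the remaining basis walks --- both are valid, and your version makes explicit two points the paper glosses over (the absence of closed paths based at $u$, and why (CK2) causes no trouble in the coefficient comparison).
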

\begin{proof}
If $u$ is a sink or $\vert s^{-1}(u)\vert=1$ the result is obvious.
Otherwise we have a normal expression  $z=ku+\sum_{f,g} f\xi_{f,g} g^*$ where $k\in \K$, and $\xi_{f,g}\in L_\K(E)$ are zero except for finitely many of them. In this expression we have taken into account that u is not the base of a cycle, hence, no more summands appear. Moreover $f,g\in s^{-1}(u)$. Since $z$ is in the center of the corner $uL_\K(E)u$ we have $z hh^*=hh^* z$ for any $h\in s^{-1}(u)$. Thus $k\ hh^*+\sum_f f\xi_{f,h} h^*=k\  hh^*+\sum_g h\xi_{h,g} g^*$ which gives
$$h(k+\xi_{h,h})h^*+\sum_{f\ne h} f\xi_{f,h}h^*=
h(k+\xi_{h,h})h^*+\sum_{g\ne h} h\xi_{h,g}g^*.$$
Thus $\sum_{f\ne h} f\xi_{f,h}h^*=\sum_{g\ne h} h\xi_{h,g}g^*$. Multiplying on the right by $h$ we get that $\sum_{f\ne h} f\xi_{f,h}=0$, for any $h\in s^{-1}(u)$. Therefore
$$z=ku+\sum_{f} f\xi_{f,g} f^*+\sum_g(\sum_{f\ne g} f\xi_{f,g}) g^*=ku+\sum_{f} f\xi_{f} f^*,$$
where $\xi_{f}:=\xi_{f,f}$. Finally, to prove that $\xi_f\in Z(r(f)L_\K(E)r(f))$ take an arbitrary element $w\in r(f)L_\K(E)r(f)$. Then $fwf^*\in uL_\K(E)u$ and hence
$z fwf^*=fwf^* z$. So $(ku+\sum_{h} h\xi_{h} h^*)fwf^*=fwf^*(ku+\sum_{h} h\xi_{h} h^*)$. This implies $f\xi_fwf^*=fw\xi_f f^*$, giving $\xi_fw=w\xi_f$.
\end{proof}

\begin{corollary}\label{paru}
Let $E$ be an acyclic graph and $\K$ an arbitrary field. Fix a basis $B$ of $L_\K(E)$ as in Remark~ \ref{basetipica}. If $z\in Z(uL_\K(E)u)$ then we have a normal expression $z=\sum_i k_i \a_i\a_i^*$ with $k_i\in \K$, and $\a_i\in\path(E)$ with $s(\alpha_i)=u$ for any $i$. In particular $Z(L_\K(E))\subseteq Z(L_\K(E))_0$, the homogeneous component of degree $0$ of $Z(L_\K(E))$.
\end{corollary}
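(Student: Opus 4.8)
The plan is to bootstrap from Lemma~\ref{rap} by induction on the degree $\partial_B(z)$. The first observation is that since $E$ is acyclic, \emph{every} vertex fails to be the base of a cycle, so Lemma~\ref{rap} is available at each vertex of $E$; this is exactly what keeps the recursion alive as we descend along edges.

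For the base case $\partial_B(z)=0$, I would argue directly. Writing $z=\sum_i k_i\alpha_i\beta_i^*$ in normal form, the condition $\partial_B(z)=0$ forces every $\alpha_i$ to be a vertex, and since $z=uzu$ we must have $s(\alpha_i)=u=r(\alpha_i)=r(\beta_i)$ together with $s(\beta_i)=u$. Thus each $\beta_i$ is a closed path based at $u$, and acyclicity forces $\beta_i=u$; hence $z=ku=ku\,u^*$, which already has the required shape with $s(u)=u$.

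For the inductive step $\partial_B(z)=n>0$, I apply Lemma~\ref{rap} to obtain $z=ku+\sum_{f\in s^{-1}(u)}f\xi_f f^*$ with $\xi_f\in Z(r(f)L_\K(E)r(f))$. The key computation is that $\xi_f=f^*zf-k\,r(f)$, and that the map $w\mapsto f^*wf$ strips the leading edge $f$ from exactly those walks of $z$ beginning with $f$ while annihilating the rest; consequently $\partial_B(\xi_f)\le \partial_B(z)-1<n$. The induction hypothesis, applied at the vertex $r(f)$, then yields $\xi_f=\sum_j k_{f,j}\gamma_{f,j}\gamma_{f,j}^*$ with $s(\gamma_{f,j})=r(f)$. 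Prepending $f$ gives $f\xi_f f^*=\sum_j k_{f,j}(f\gamma_{f,j})(f\gamma_{f,j})^*$ with $s(f\gamma_{f,j})=u$, and together with the term $ku=ku\,u^*$ this assembles the desired normal expression $z=\sum_i k_i\alpha_i\alpha_i^*$ with $s(\alpha_i)=u$. For the final assertion, given $z\in Z(L_\K(E))$ I would choose a local unit $e=\sum_{v\in V}v$ with $eze=z$ and use centrality to check that $vzw=\delta_{v,w}vzv$, so that $z=\sum_{v\in V}vzv$ with each $vzv\in Z(vL_\K(E)v)$. By the first part every $vzv$ is a sum of terms $\alpha_i\alpha_i^*$, hence homogeneous of degree $0$; summing over $v$ places $z$ in $L_\K(E)_0$, and therefore in $Z(L_\K(E))_0$.

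I expect the main obstacle to be the degree bookkeeping in the inductive step, namely the strict inequality $\partial_B(\xi_f)<\partial_B(z)$: one must verify that after re-expressing $f^*zf$ in normal form relative to $B$ the lengths of the path-parts genuinely drop by one and that nothing of top degree survives. Once this strict decrease is secured, the recursion terminates automatically and the induction closes.
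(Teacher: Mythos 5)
Your proof is correct and follows essentially the same route as the paper: induction on $\partial_B(z)$, applying Lemma~\ref{rap} at the vertex $u$, pushing the induction hypothesis into the corners $r(f)L_\K(E)r(f)$, and reassembling, then decomposing a central element into corner pieces $vzv$ for the final inclusion. The differences are only in level of detail---you spell out the base case (where acyclicity forces $\beta_i=u$) and the strict degree drop $\partial_B(\xi_f)<\partial_B(z)$, both of which the paper leaves implicit.
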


\begin{proof}
Fix a basis $B$ of $L_\K(E)$ as in Remark \ref{basetipica}. Write $\partial:=\partial_B$ and 
consider an element $z\in Z(uL_\K(E)u)$. If $\partial(z)=0$ we have 
$z=k u$ for some scalar $k$. In this case we are done.
Assume the results holds for any $z$ with $\partial(z)<n$. Take $z$ with $\partial(z)=n$.
By Lemma~\ref{rap} we have a normal form 
$z=ku+\sum f\xi_f f^*$, where $\partial(\xi_f)<n$ for any $f\in s^{-1}(u)$. Since $\xi_f$ is in the center of $r(f)L_\K(E)r(f)$, the induction hypothesis implies that each $\xi_f$ has a normal expression of the form $\sum k_i\a_i\a_i^*$.
Replacing $\xi_f$ by its normal expression we get a normal expression for $z$ as required.
Finally, observe that $Z(L_\K(E))\subseteq Z(L_\K(E))_0$ follows taking into account the obtained expression. This containment can be also derived from the structure theorem of the center of a Leavitt path algebra (see \cite[Theorem 3.3]{CMMS}).
\end{proof}

Recall that a graph is said to satisfy \emph{Condition} (MT3), also called \emph{downward directedness}, if given vertices $v$ and $w$, there exists a vertex $u$ such that $v\geq u$ and $w\geq u$. See \cite[Definition 4.1.2]{AAS} for details.

\begin{proposition}\label{criacaballar}
Let $E$ be an acyclic graph which satisfies Condition (MT3). If  $u\in E^0$ and $z\in Z(uL_\K(E)u)$, then $z=ku$ for some $k\in \K.$
\end{proposition}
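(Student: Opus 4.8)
The plan is to argue by induction on $n:=\partial_B(z)$, where $B$ is a fixed basis as in Remark~\ref{basetipica}, peeling off one layer of edges at each step by means of Lemma~\ref{rap} and using Condition (MT3) to control the scalars that appear. For the base case $n=0$ one has $z=ku$ directly from the definition of $\partial_B$, so there is nothing to prove.

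For the inductive step I would assume the statement holds, for \emph{every} vertex of $E$, for central elements of degree $<n$, and take $z\in Z(uL_\K(E)u)$ with $\partial_B(z)=n\ge 1$. Since $E$ is acyclic, $u$ is not the base of a cycle, so Lemma~\ref{rap} applies and produces a normal expression
\[
z=ku+\sum_{f\in s^{-1}(u)} f\,\xi_f\,f^*,\qquad \xi_f\in Z\bigl(r(f)L_\K(E)r(f)\bigr),
\]
with $\partial_B(\xi_f)<n$ (exactly as in the proof of Corollary~\ref{paru}). Applying the induction hypothesis to each $\xi_f$ (these live in the same graph $E$, which remains acyclic and satisfies (MT3)) gives $\xi_f=k_f\,r(f)$ for scalars $k_f\in\K$, so that $z=ku+\sum_{f\in s^{-1}(u)} k_f\, ff^*$, a finite sum.

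The key step is to show that all the $k_f$ coincide, and this is where Condition (MT3) enters. Fixing two distinct edges $f,g\in s^{-1}(u)$, downward directedness provides a vertex $v$ with $r(f)\ge v$ and $r(g)\ge v$; I then choose paths $\mu$ from $r(f)$ to $v$ and $\nu$ from $r(g)$ to $v$, and form the nonzero walk $x:=f\mu\nu^*g^*\in uL_\K(E)u$. A direct computation with (CK1) (using $h^*f=\delta_{h,f}\,r(f)$ and $g^*h=\delta_{g,h}\,r(g)$) yields $zx=(k+k_f)x$ and $xz=(k+k_g)x$; since $z$ is central in the corner, $zx=xz$ forces $k_f=k_g$.

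To finish I would distinguish cases according to the nature of $u$. If $u$ is a sink, then $s^{-1}(u)=\emptyset$ and $z=ku$. If $u$ is regular, the previous step shows all $k_f$ equal a common scalar $k'$, and (CK2) gives $\sum_{f\in s^{-1}(u)}ff^*=u$, whence $z=ku+k'u=(k+k')u$. If $u$ is an infinite emitter, the same step shows that all of the infinitely many scalars $k_f$ coincide, while only finitely many are nonzero because the sum is finite; hence that common value is $0$ and $z=ku$. In every case $z$ is a scalar multiple of $u$, closing the induction. I expect the main obstacle to be the construction of the separating walk $x=f\mu\nu^*g^*$: it is precisely Condition (MT3) that supplies a common descendant $v$ of $r(f)$ and $r(g)$, and hence an element of the corner able to distinguish $k_f$ from $k_g$.
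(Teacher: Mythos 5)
Your proof is correct, and its skeleton is the paper's: induction on $\partial_B$, the corner decomposition $z=ku+\sum_f f\xi_f f^*$ supplied by Lemma~\ref{rap} (with the induction hypothesis applied to the $\xi_f$'s), and a separating walk $f\mu\nu^*g^*$ built from Condition (MT3). Where you genuinely diverge is in how the degree-one case is closed. The paper compares each coefficient against a \emph{reference edge whose coefficient is known to vanish}: when $u$ is an infinite emitter it picks an edge outside the finite support of $z$, and when $u$ is a regular vertex it uses the ``forbidden'' edge $f_0$ of the basis $B$ (the special edge $e_{n_\nu}^\nu$ of \cite[Corollary 1.5.12]{AAS}, whose term $f_0f_0^*$ cannot occur in a normal form); imposing commutativity of $z$ with $f_i\a\b^*f_0^*$ then forces every $k_i=0$, so $z=ku$ with the very scalar $k$ of the decomposition. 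You instead compare arbitrary pairs $f,g\in s^{-1}(u)$, conclude that all $k_f$ coincide, and finish by (CK2) absorption in the regular case, giving $z=(k+k')u$, or by finiteness of support in the infinite-emitter case, giving $k'=0$. The trade-off: the paper's argument stays entirely inside normal forms relative to $B$ and never invokes (CK2) explicitly, while yours avoids any appeal to the special structure of that basis (the forbidden edge) and treats regular vertices and infinite emitters uniformly; this is arguably more elementary and robust, at the harmless cost that the scalar you produce is $k+k'$ rather than $k$ itself. Both computations rest on the same facts, namely (CK1) and the nonvanishing of the walk $f\mu\nu^*g^*$, so each is a complete proof.
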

\begin{proof}
Assume first that $u$ is an infinite emitter. We use induction on the number $\partial_B$ for a fixed basis $B$ of $L_\K(E)$ as in Remark \ref{basetipica}. If $\partial_B(z)=0$ we are done. Assume $\partial_B(z)=1$.
Write $z=k u+\sum_i k_i f_if_i^*$ in the basis $B$, where $k,k_i\in\K$ and some $k_i$ is nonzero. Fix this $i$. Since $u$ is an infinite emitter, it is possible to find an edge $f_j$ which does not appear in the expression of $z$. Then by Condition (MT3) the vertices $r(f_i)$ and $r(f_j)$ connect to a certain vertex $w$. So, choose paths $\a$ from $r(f_i)$ to $w$ and $\b$ from $r(f_j)$ to $w$. Then $f_i\a\b^*f_j^*\in uL_\K(E)u$ and therefore $zf_i\a\b^*f_j^*=f_i\a\b^*f_j^* z$. This commutativity implies $k_i=0$, which is a contradiction.
Now assume that the property holds for any $z$ satisfying the hypothesis and such that $\partial_B(z)<n$. Take $z$ with 
$\partial_B(z)=n$ and consider a normal expression $z=k u+\sum f\xi_f f^*$ relative to $B$. 
We have $\partial_B(\xi_f)<n$ for any $f$
and by Lemma \ref{rap} each element $\xi_f$ is in the center of the corresponding corner. Thus, by the induction hypothesis, $\xi_f= k_f r(f)$ for some $k_f\in\K$. Consequently $z=k u+\sum_f k_f ff^*$ so that $\partial_B(z)\le 1$ and then $z$ is a scalar multiple of $u$ as required.

If $u$ is a sink we are done. Finally, assume then that $u$ is neither an infinite emitter nor a sink, and let $z$ be in $Z(uL_\K(E)u)$. Write again $z$ in its normal form relative to a fixed basis $B$. To prove the result we use induction on the number $\partial_B(z)$. Assume that $f_0$ is one of the \lq\lq forbidden\rq\rq\ elements in $B$ (i.e. $f_0$ is one element of the form $e_{n_\nu}^\nu$ in \cite[Corollary 1.5.12]{AAS}) and satisfies $f_0f_0^*\in uAu$.
If $\partial_B(z)=0$ we are done. Let us deal with the case $\partial_B(z)=1$.
Then $z=k u+\sum_{i=1}^n k_i f_if_i^*$, where $k,k_i\in\K$ and some $k_i$ is nonzero. Fix $i$ such that $k_i\neq 0$. By Condition (MT3) the ranges $r(f_i)$ and $r(f_0)$ connect to a certain vertex $w$, so there are paths $\a$ from $r(f_i)$ to $w$ and $\b$ from $r(f_0)$ to $w$. Then $f_i\a\b^*f_0^*\in uL_\K(E)u$ and therefore $zf_i\a\b^*f_0^*=f_i\a\b^*f_0^* z$. This commutativity implies $k_i=0$, which is a contradiction.
Now assume that the property hold for any $z$ in the hypothesis with $\partial_B(z)<n$. Take an $z$ with 
$\partial_B(z)=n$. We can write $z=k u+\sum f\xi_f f^*$ a normal expression of $z$ relative to $B$. 
We have $\partial_B(\xi_f)<n$ for any $f$
and by Lemma \ref{rap} each element $\xi_f$ is in the center of the corresponding corner. Thus by the induction hypothesis $\xi_f= k_f r(f)$ for some $k_f\in\K$. Consequently $z=k u+\sum_f k_f ff^*$ so that $\partial_B(z)\le 1$ and therefore $z$ is a scalar multiple of $u$ as required.
\end{proof}

\section{Computing the centroid of a simple Leavitt path algebra}

For an arbitrary $\K$-algebra $A$ and  an idempotent $u$ of $A$, we define:
$$C_u:=\{\T(u)\ \vert \ \T\in \clz(A)\},$$
which is a a subalgebra of $uAu$. It is well known that when $A$ is simple its centroid is a field. For simple algebras, $C_u$ will also be a field, as shown in the lemma that follows.

\begin{lemma}\label{peassomulo}
Let $A$ be a $\K$-algebra and $u$ an idempotent in $A$. Then:
\begin{enumerate}[\rm (i)]
\item $C_u$ is contained in the center of $uAu$.
\end{enumerate}

Assume that $A$ is simple. Then:

\begin{enumerate}
\item[\rm (ii)] $C_u$ is a field. 
\item[\rm (iii)] For $x, y\neq 0$, with $x\in C_u$ and $y\in uAu$, we have $xy\ne 0$.
\end{enumerate}
\end{lemma}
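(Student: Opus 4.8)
The plan is to treat the three assertions in order, deriving (ii) and (iii) from (i) together with the fact, recalled above, that the centroid of a simple algebra is a field; throughout, the only tool is the defining identity $\tau(xy)=\tau(x)y=x\tau(y)$ of a centralizer. For (i), fix $\tau\in\clz(A)$, so a typical element of $C_u$ is $\tau(u)$. First I would record that $\tau(u)\in uAu$: applying the centroid identity to $u\cdot u=u$ gives $\tau(u)=\tau(u)u=u\tau(u)$, whence $\tau(u)=u\tau(u)u$. The central observation is then that every $y\in uAu$ satisfies $uy=y=yu$, so placing the factor $u$ on either side of $y$ yields $\tau(u)y=\tau(uy)=\tau(y)$ and $y\tau(u)=\tau(yu)=\tau(y)$. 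Hence $\tau(u)y=y\tau(u)$ for all $y\in uAu$, i.e. $\tau(u)$ lies in the center of $uAu$. This proves (i) and, as a byproduct, yields the restriction formula $\tau(y)=\tau(u)y$ describing the action of $\tau$ on the corner.

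For (ii) I would upgrade this byproduct to an algebra statement. Taking $y=\sigma(u)\in C_u\subseteq uAu$ in the formula $\tau(y)=\tau(u)y$ gives, for any $\sigma,\tau\in\clz(A)$,
\[\tau(u)\,\sigma(u)=\tau(\sigma(u))=(\tau\circ\sigma)(u),\]
so the evaluation map $\clz(A)\to C_u$, $\tau\mapsto\tau(u)$, is a surjective homomorphism of $\K$-algebras (linearity is clear, and composition is the product in $\clz(A)$). Since $A$ is simple, $\clz(A)$ is a field, so the kernel of this homomorphism is either $0$ or all of $\clz(A)$; as the identity centralizer satisfies $\mathrm{id}(u)=u\neq0$ (here the tacit hypothesis $u\neq0$ is used), the map is nonzero and hence injective. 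Consequently $C_u\cong\clz(A)$ is a field, with identity element $u$.

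Finally, (iii) is immediate from (ii): given $0\neq x\in C_u$ and $0\neq y\in uAu$, there is $x^{-1}\in C_u\subseteq uAu$ with $x^{-1}x=u$, and if $xy=0$ then $y=uy=(x^{-1}x)y=x^{-1}(xy)=0$, a contradiction, so $xy\neq0$. As for difficulty, essentially everything is formal manipulation with the centroid identity; the only step requiring genuine care is the multiplicativity of the evaluation map in (ii), which is exactly where the restriction formula $\tau(y)=\tau(u)y$ from (i) is indispensable, together with the mild but necessary assumption $u\neq0$ (without which $C_u$ would be the zero algebra rather than a field).
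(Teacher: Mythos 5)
Your proposal is correct, and part (i) is essentially identical to the paper's argument ($\tau(u)y=\tau(uy)=\tau(y)=\tau(yu)=y\tau(u)$). Where you diverge is in (ii): the paper argues directly that each nonzero element of $C_u$ is invertible \emph{inside} $C_u$, by taking $0\neq\T(u)$, inverting $\T$ in the field $\clz(A)$, and checking $\T^{-1}(u)\T(u)=\T^{-1}(\T(u^2))=u$; you instead organize the proof around the evaluation map $\clz(A)\to C_u$, $\tau\mapsto\tau(u)$, prove it is a surjective algebra homomorphism via the restriction formula $\tau(y)=\tau(u)y$, and conclude injectivity from simplicity of the field $\clz(A)$. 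Your route proves the strictly stronger statement $C_u\cong\clz(A)$, which is in fact the content of the paper's later Proposition~\ref{Paul} (the map $\Omega$, established there for prime Leavitt path algebras), so you have anticipated a key idea of the paper; the paper's version of (ii) is shorter because it never needs multiplicativity or injectivity of the evaluation map. Your explicit flagging of the hypothesis $u\neq 0$ is also a genuine (if minor) refinement: as stated, the lemma fails for $u=0$, where $C_u=\{0\}$, though this is harmless in the paper since $u$ is always a vertex. Finally, for (iii) you invert $x$ inside $C_u$ and compute $y=x^{-1}(xy)=0$, whereas the paper inverts the centralizer $\T$ with $x=\T(u)$ and uses $0=\T(u)y=\T(y)$; these are the same idea in two equivalent packagings, yours resting on (ii), the paper's resting directly on the invertibility of centralizers.
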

\begin{proof}
(i) Take $\T(u)\in C_u$ and $x\in uAu$. Then $\T(u)x=\T(ux)=\T(x)=\T(xu)=x\T(u)$.

(ii) Take $0\ne \T(u)\in C_u$. Since $A$ is simple,  $\clz{(A)}$ is a field, hence there exists $\T^{-1}$ and we have $\T^{-1}(u)\T(u)=\T^{-1}(u\T(u))=\T^{-1}\T(u^2)=u.$
Thus $\T(u)^{-1}=\T^{-1}(u)$. 

(iii) Consider elements
$0\ne x\in C_u$ and $y\in uAu$. If $xy=0$ then, since $x=\T(u)$ for some centralizer $\T$, we have $0=\T(u)y=\T(uy)=\T(y)$ and being $\T$ invertible (because $\clz{(A)}$ is a field) implies $y=0$.
\end{proof}

\begin{lemma}\label{guarrin}
Let $E$ be a graph whose associated Leavitt path algebra $L_\K(E)$ is simple. Then for any vertex $u$ we have $C_u\cap\hbox{Path}(E)=\{u\}=C_u\cap\hbox{Path}(E)^*$.
\end{lemma}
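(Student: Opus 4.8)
The plan is to dispose of the trivial inclusion first and then rule out paths of positive length by extracting a geometric consequence of simplicity. Throughout set $A:=L_\K(E)$ and recall that $u\in E^0$ is an idempotent. The identity map of $A$ is a centralizer and sends $u$ to $u$, so $u\in C_u$; since $u$ is a path of length $0$ and $u^\ast=u$, we get $u\in C_u\cap\path(E)$ and $u\in C_u\cap\path(E)^\ast$. Thus $\{u\}$ is contained in both intersections, and everything rests on showing that no path of length $\ge 1$ can occur.

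Suppose then that $\alpha\in C_u\cap\path(E)$ with $|\alpha|\ge 1$. Every element of $C_u$ has the form $\T(u)=\T(u^2)=u\,\T(u)\,u$, so $C_u\subseteq uAu$; hence $u\alpha=\alpha=\alpha u$, which for a (nonzero basis) path forces $s(\alpha)=r(\alpha)=u$, i.e. $\alpha$ is a closed path based at $u$. By Lemma~\ref{peassomulo}(i), $C_u$ lies in the center of $uAu$, so $\alpha$ commutes with $\alpha^\ast\in uAu$. Combining this with the identity $\alpha^\ast\alpha=r(\alpha)=u$ yields the crucial algebraic relation
$$\alpha\alpha^\ast=\alpha^\ast\alpha=u.$$

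The heart of the argument is to show that $\alpha\alpha^\ast=u$ for a closed path of positive length contradicts simplicity. Writing $\alpha=e_1\alpha'$, a short computation with the relations (CK1) and (E2) gives $e_1e_1^\ast=e_1e_1^\ast u=e_1e_1^\ast\,\alpha\alpha^\ast=e_1\alpha'\alpha'^\ast e_1^\ast=u$; consequently any edge $f\ne e_1$ with $s(f)=u$ satisfies $f=uf=e_1e_1^\ast f=0$, so $u$ emits only $e_1$. Cancelling $e_1$ and $e_1^\ast$ on the relation $e_1\alpha'\alpha'^\ast e_1^\ast=e_1e_1^\ast$ leaves $\alpha'\alpha'^\ast=s(\alpha')$, and iterating this descent shows that \emph{every} vertex occurring as the source of an edge of $\alpha$ emits exactly one edge, namely the corresponding edge of $\alpha$. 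Since $\alpha$ is closed of positive length it contains a cycle $c$, and each vertex of $c$ then emits a unique edge, so $c$ has no exit. This is impossible, because a simple Leavitt path algebra satisfies Condition (L), that is, every cycle has an exit (see \cite{AAS}). Hence $|\alpha|=0$ and $\alpha=u$, proving $C_u\cap\path(E)=\{u\}$.

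Finally, $C_u\cap\path(E)^\ast=\{u\}$ follows by the mirror computation: if $\beta^\ast\in C_u$ with $\beta\in\path(E)$ and $|\beta|\ge 1$, then $\beta^\ast\in uAu$ forces $\beta$ to be closed at $u$, and commuting $\beta^\ast$ with $\beta\in uAu$ together with $\beta^\ast\beta=r(\beta)=u$ again produces $\beta\beta^\ast=u$, hence the same exitless cycle and the same contradiction. I expect the only real obstacle to be the passage from the algebraic identity $\alpha\alpha^\ast=u$ to the geometric statement that $\alpha$ contains a cycle without exit; this descent, via the idempotent inequalities $\alpha\alpha^\ast\le e_1e_1^\ast\le u$, is exactly the point at which simplicity (through Condition (L)) enters.
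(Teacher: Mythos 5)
Your proof is correct, but it follows a genuinely different route from the paper's. The paper's argument for real paths is a zero-divisor argument: writing $\T(u)=f_1\cdots f_n$, it uses Condition (L) (respectively, the definition of cycle) to locate a vertex $w$ on the closed path with two distinct outgoing edges, and then the nonzero element $y=f_1\cdots f_k\,gg^*f_k^*\cdots f_1^*$ satisfies $y\,\T(u)=0$, contradicting Lemma~\ref{peassomulo}(iii); for ghost paths the paper inverts $\T$ (the centroid of a simple algebra is a field) to reduce to the real-path case, so the field property of $\clz(L_\K(E))$ is used in both halves. You, by contrast, use only Lemma~\ref{peassomulo}(i): centrality of $\alpha$ in $uL_\K(E)u$ together with $\alpha^*\alpha=u$ gives $\alpha\alpha^*=u$, and your descent ($e_1e_1^*=u$, then $\alpha'\alpha'^{\,*}=s(\alpha')$, and so on) shows that every source along $\alpha$ emits exactly one edge, so $\alpha$ contains a cycle without exit, contradicting Condition (L) directly; the ghost case is the mirror computation, with no inversion of $\T$ needed. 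Your route is more elementary and more portable: it really proves that $Z(uL_\K(E)u)\cap\path(E)=\{u\}$ for any graph satisfying Condition (L), which is close in spirit to Lemma~\ref{Charlie}(3)--(4) that the paper later establishes separately for the prime case, whereas the paper's route is shorter because invertibility of centralizers does the heavy lifting. Two small points you should make explicit when writing this up: the step $f=e_1e_1^*f=0$ for $f\ne e_1$ eliminates $f$ only because edges are nonzero elements of $L_\K(E)$ (a standard fact), and the existence of a cycle inside a closed path of positive length (take a minimal closed subpath, using that repeated sources force repeated edges once each source emits a single edge) is what entitles you to invoke Condition (L).
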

\begin{proof}
First we prove that $C_u\cap\hbox{Path}(E)\subseteq \{u\}$.
Take $\T(u)\in\hbox{Path}(E)$ for some $\T\in \clz(L_\K(E))$. If $\T(u)$ is a trivial path, then, since $\T(u)=u\T(u)u$, we have $\T(u)=u$. If $\T(u)=f_1\cdots f_n$ is a nontrivial path, $s(f_1)=r(f_n)=u$, hence $u$ is the source of the closed path $f_1\cdots f_n$. 

We claim that there is a vertex $w$ in $(f_1\cdots f_n)^0$ such that $s^{-1}(w)$ has at least two elements: indeed, if $f_1\cdots f_n$ turns out to be a cycle, the simplicity of $L_\K(E)$ implies that the cycle has an exit (see \cite[Theorem 3.1.10]{AAS}), hence for some $w$ in $(f_1\cdots f_n)^0$ we have $s^{-1}(w)$ has at least two elements.
If $f_1\cdots f_n$ is not a cycle, then by definition of cycle itself, some $w\in (f_1\cdots f_n)^0$ satisfies 
$\vert s^{-1}(w)\vert\ge 2$. So we may write
$\T(u)=f_1\cdots f_k w f_{k+1}\cdots f_n$ and there is some edge $g\in s^{-1}(w)$, with $g\ne f_{k+1}$.
Then $$(f_1\cdots f_k g g^*f_k^*\cdots f_1^*)\T(u)
=f_1\cdots f_k g g^*f_{k+1}\cdots f_n=0,$$
which contradicts Lemma  \ref{peassomulo} (iii). 

Now suppose that $\T(u)\in\hbox{Path}(E)^*$, say $\T(u)=e_1^*\cdots e_n^*$. Then $r(e_1)=u= s(e_n)$ and $\T(e_n\cdots e_1)=r(e_1)=u$. Using that $L_\K(E)$ is simple, we know that there exists $\T^{-1}$, so $e_n \cdots e_1 = \T^{-1}(u)$ and $e_n \cdots e_1 \in C_u\cap\hbox{Path}(E)$, a contradiction.
\end{proof}

\begin{lemma}\label{spesso}
Let $L_\K(E)$ be the Leavitt path algebra associated to an arbitrary graph $E$,  and let $u,v\in E^0$. Consider a centralizer $\T\in \clz(A)$. Then:
\begin{enumerate}[\rm (i)]
    \item \label{spessoi} If there is a path from $u$ to $v$ and $\T(u)=ku$, where $k\in \K$, then $\T(v)=kv$.
    \item If $u$ is in $\overline{\{v\}}$ and $\T(v)=kv$, where $k\in \K$, then also $\T(u)=ku$.
    \item Assume that $u$ is not in a cycle and $\T(u)\notin \K u$. Let $g$ be an edge which appear in the normal form of $\T(u)$ relative to a basis $B$. If $v=r(g)$, then $\partial_B (\T(u)) > \partial_B (\T(v))$.
\end{enumerate}
\end{lemma}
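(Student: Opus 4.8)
For (i), Proposition~\ref{platano} yields, for any path $\mu$ from $u$ to $v$, the identity $\T(u)\mu=\mu\T(v)$. Substituting $\T(u)=ku$ gives $k\mu=\mu\T(v)$; left--multiplying by $\mu^{*}$ and using $\mu^{*}\mu=v$ together with $v\T(v)=\T(v)$ gives $kv=\T(v)$. For (ii) the plan is to induct on the layers of $\overline{\{v\}}=\bigcup_{n\ge0}\Lambda^{n}(\{v\})$. The base case $u\in\Lambda^{0}=T(v)$ is (i) applied to a path from $v$ to $u$. In the inductive step $u$ is regular with $r(s^{-1}(u))\subseteq\Lambda^{n}$, so the hypothesis gives $\T(r(f))=k\,r(f)$ for every $f\in s^{-1}(u)$; then $\T(f)=f\T(r(f))=kf$, and applying $\T$ to the relation (CK2), $u=\sum_{s(f)=u}ff^{*}$, gives $\T(u)=\sum_{f}\T(f)f^{*}=k\sum_{f}ff^{*}=ku$.

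For (iii) I would again use $\T(u)\mu=\mu\T(v)$, rewritten as $\T(v)=\mu^{*}\T(u)\mu$ for a path $\mu$ from $u$ to $v$ (using $\mu^{*}\mu=v$ and $\T(u)\in uL_\K(E)u$). Since $g$ occurs in a walk of the normal form of $\T(u)$ and every such walk begins at $u$, the vertex $v=r(g)$ is reachable from $u$; as $u$ lies on no cycle we have $v\neq u$, so I may take $\mu$ to be a \emph{simple} path of length $m\ge1$. Writing $N:=\partial_{B}(\T(u))$, which is $\ge1$ because $\T(u)\notin\K u$, it suffices to prove that every walk occurring in $\mu^{*}\T(u)\mu$ has real part of length at most $N-1$.

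To see this, expand $\T(u)=\sum_{i}k_{i}\alpha_{i}\beta_{i}^{*}$ and examine $\mu^{*}\alpha_{i}\beta_{i}^{*}\mu$ by splitting $\mu^{*}\alpha_{i}$ and $\beta_{i}^{*}\mu$: according to the prefix relation between $\mu$ and $\alpha_{i}$ (resp.\ $\beta_{i}$) each factor is a path, a ghost path, or $0$. The four surviving products have real-part lengths $|\alpha_{i}|-m$, $|\alpha_{i}|-|\beta_{i}|$, $0$ and $0$. Using $m\ge1$, the simplicity of $\mu$ (which, since $r(\alpha_{i})=r(\beta_{i})$, forces $\alpha_{i}=\beta_{i}$ and hence real length $0$ in the ghost$\times$path case), and the no-cycle hypothesis at $u$ (which forces $|\beta_{i}|\ge1$ whenever the path$\times$path case occurs, for otherwise $\alpha_{i}$ would be a nontrivial closed path at $u$), each of these is $\le N-1$.

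The step I expect to be most delicate is the degree bookkeeping: one must check that rewriting these products in normal form relative to $B$ does not raise $\partial_{B}$. This holds because the only reductions are the Cuntz--Krieger relations, and rewriting a walk through (CK2) at a regular range vertex produces walks of the same or smaller real-part length; hence $\partial_{B}(z)$ never exceeds the largest real-part length appearing in any walk expression of $z$. Granting this, $\partial_{B}(\T(v))=\partial_{B}(\mu^{*}\T(u)\mu)\le N-1<N=\partial_{B}(\T(u))$, as required. (Alternatively one can run an induction on $\partial_{B}(\T(u))$ starting from Lemma~\ref{rap}, writing $\T(u)=ku+\sum_{f}f\xi_{f}f^{*}$ with $\xi_{f}=\T(r(f))-k\,r(f)$; the outer-edge case is immediate, and the identity $\T(v)=\mu^{*}\T(u)\mu$ is precisely what lets one bypass the difficulty that a descendant $r(f)$ might itself lie on a cycle.)
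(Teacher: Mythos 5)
Your proofs of (i) and (ii) are correct and essentially the paper's own. For (i) the paper computes $\T(v)=\T(\lambda^*\lambda)=\lambda^*\T(u)\lambda=kv$, which is your calculation; note only that the identity $\T(u)\mu=\mu\T(v)$ needs no appeal to Proposition~\ref{platano} (that result is about \emph{extending} vertex maps to centralizers), since it is immediate from the centralizer axioms: $\T(u)\mu=\T(u\mu)=\T(\mu v)=\mu\T(v)$. For (ii) the paper simply cites \cite[Lemma 1.2]{CMMSS} to write $u=\sum_i\alpha_i\alpha_i^*$ with $r(\alpha_i)\in T(v)$; your induction on the layers $\Lambda^n$ of $\overline{\{v\}}$ re-proves exactly that decomposition, so the content is the same, just self-contained. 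The genuine divergence is in (iii). The paper's proof is the short argument you relegate to your closing parenthesis: since $\T(u)\in Z(uL_\K(E)u)$ by Lemma~\ref{peassomulo}(i), Lemma~\ref{rap} gives a normal expression $\T(u)=ku+\sum_{f\in s^{-1}(u)}f\xi_ff^*$, and then $\T(v)=\T(g^*g)=g^*\T(u)g=kv+\xi_g$, so that $\partial_B(\T(v))=\partial_B(\xi_g)<\partial_B(\T(u))$ is read off inside a single normal form --- no renormalization can occur, which is precisely why the paper needs no analogue of your bookkeeping lemma. (Implicitly the paper reads ``$g$ appears in the normal form'' as $g\in s^{-1}(u)$, which is also how the lemma is applied in Proposition~\ref{kfield}.) Your main line --- $\T(v)=\mu^*\T(u)\mu$ for a \emph{simple} path $\mu$ from $u$ to $r(g)$, the four prefix cases, and the fact that reduction to the basis $B$ never raises $\partial_B$ --- is correct: the cases are exhaustive, simplicity of $\mu$ is exactly what kills the ghost$\times$path case, the no-cycle hypothesis is exactly what kills the path$\times$path case, and the bookkeeping claim holds because the only rewriting move, (CK2) applied at a turn $\gamma ee^*\delta^*$ with $e$ the designated edge at the regular vertex $r(\gamma)$, replaces a walk of real length $|\gamma|+1$ by walks of real lengths $|\gamma|$ and $|\gamma|+1$. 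What your route buys is generality: it covers any edge $g$ occurring anywhere in the normal form of $\T(u)$, the literal reading of the statement, rather than only edges leaving $u$; the cost is the combinatorial case analysis and the normalization lemma, both of which the paper's use of Lemma~\ref{rap} renders unnecessary.
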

\begin{proof}
(i) If $\lambda$ is a path with $s(\lambda)=u$ and $r(\lambda)=v$
then $v=\lambda^*\lambda$. Thus, if $\T(u)=k u$ we have $\T(v)=\T(\lambda^*\lambda)=\lambda^*\T(u)\lambda=kv$. 

(ii) For any $w\in T(v)$, by (i) we have $\T(w)=kw$ for $k\in \K$ such that $\T(v)=kv$.
 Now, for $u\in\overline{\{v\}}$, by \cite[Lemma 1.2]{CMMSS} we may write $u=\sum \alpha_i\alpha_i^*$, where each $\alpha_i$ is a path whose range is in $T(v)$. Therefore 
$\T(u)=\sum \alpha_i kr(\alpha_i)\alpha_i^*=k u.$

(iii)
Since $\T(u) \in  Z(uL_\K(E)u)$ (use (i) in Lemma \ref{peassomulo}), by Lemma~\ref{rap} we have a normal expression $\T(u)=ku+\sum f\xi_f f^*$, where  $k\in \K$  and the $\xi_f$'s are elements in $Z\left(r(f)L_\K(E)r(f)\right)$. Then
$g^\ast\T(u)g=kg^\ast ug+\sum g^\ast f\xi_f f^* g$; i.e.,  $\tau(v)=\tau(g^\ast g)= kv + \xi_g$. This implies $\partial_B(\tau(v)) = \partial_B(\xi_g) < \partial_B(\tau(u))$.
\end{proof}

\begin{corollary}\label{porco}
If $L_\K(E)$ is a simple Leavitt path algebra then, for any $u\in E^0$, we have $C_u=\K u$.
\end{corollary}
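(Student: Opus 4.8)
\emph{Set-up and the structural reduction.} The inclusion $\K u\subseteq C_u$ is immediate, since every scalar multiple of the identity is a centralizer. For the converse I fix $\tau\in\clz(L_\K(E))$ and aim to show $\tau(u)\in\K u$. The key structural fact I would exploit is that in a simple $L_\K(E)$ the only hereditary and saturated subsets of $E^0$ are $\emptyset$ and $E^0$ (see \cite[Theorem 3.1.10]{AAS}), so that $\overline{\{w\}}=E^0$ for \emph{every} vertex $w$. Combined with Lemma~\ref{spesso}(ii), this has a striking consequence: the moment we exhibit a single vertex $w$ with $\tau(w)=kw$ for some $k\in\K$, we automatically obtain $\tau(x)=kx$ for \emph{all} $x\in E^0$ (with the same scalar $k$), and in particular $\tau(u)\in\K u$. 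Thus the entire problem collapses to the search for one vertex where $\tau$ acts as a scalar.

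\emph{Producing such a vertex via reduction.} If $\tau$ vanishes at some vertex, the propagation above (with $k=0$) forces $\tau$ to vanish at all vertices, and we are done; so I assume $\tau(v)\neq 0$ for a chosen $v$. By Lemma~\ref{peassomulo}(i), $z:=\tau(v)$ is a nonzero element of the corner $vL_\K(E)v$. Since $L_\K(E)$ is simple it satisfies Condition (L), so the cyclic alternative of the Reduction Theorem (see \cite{AAS}) is void, and there exist paths $\mu,\nu$, necessarily with $s(\mu)=s(\nu)=v$ and $r(\mu)=r(\nu)=:w$, such that $\mu^*z\nu=kw$ with $0\neq k\in\K$. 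The defining identities of a centralizer give $\mu^*z\nu=\mu^*\tau(v)\nu=\tau(\mu^*\nu)$, so $\tau(\mu^*\nu)=kw$.

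\emph{The crux: eliminating the closed-path cases.} Now I would analyze $\mu^*\nu$. Since $\tau(\mu^*\nu)=kw\neq 0$ we have $\mu^*\nu\neq 0$, which by (CK1) forces $\mu$ and $\nu$ to be comparable as paths. If $\mu=\nu$ then $\mu^*\nu=w$ and $\tau(w)=kw$, exactly the vertex we want. In the remaining cases $\nu=\mu\nu'$ or $\mu=\nu\mu'$, one gets a \emph{nontrivial closed path} ($\nu'$, resp.\ $\mu'$) based at $w$; here a short manipulation using $\tau(w)\in Z(wL_\K(E)w)$ together with $\nu'^*\nu'=w$ (resp.\ $\mu'^*\mu'=w$) shows that $\tau(w)$ equals a nonzero scalar multiple of the ghost path $\nu'^*$ (resp.\ of the path $\mu'$). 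Since $\tau(w)\in C_w$, this would place a nontrivial ghost path (resp.\ path) inside $C_w$, contradicting Lemma~\ref{guarrin}. Hence only $\mu=\nu$ survives, and $\tau(w)=kw$. This removal of the closed-path contributions through Lemma~\ref{guarrin} is the main technical point of the argument; it is precisely the obstacle that arises because the graph may contain cycles, and Lemma~\ref{guarrin} is exactly the tool tailored to dispatch it.

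\emph{Conclusion and an alternative.} Feeding $\tau(w)=kw$ back into the reduction of the first paragraph yields $\tau(u)=ku$, so $C_u\subseteq\K u$ and therefore $C_u=\K u$. I would emphasize that this argument is uniform and does not require splitting into the acyclic and the (purely infinite) cyclic cases. When $E$ happens to be acyclic one can in fact bypass the Reduction Theorem altogether: simplicity implies primeness and hence Condition (MT3), so Proposition~\ref{criacaballar} directly gives $\tau(u)\in Z(uL_\K(E)u)=\K u$. The genuinely new content therefore lies in the cyclic situation, where controlling the closed paths introduced by the reduction is the heart of the matter.
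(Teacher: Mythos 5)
Your proof is correct and is essentially the paper's own argument: both rest on the Reduction Theorem (whose vertex alternative is the only possibility because simplicity gives Condition (L)), both then use Lemma~\ref{guarrin} to exclude a nontrivial path or ghost path, and both finally propagate the scalar from one vertex to all of $E^0$ via Lemma~\ref{spesso} and the triviality of hereditary saturated sets in the simple case. The only deviation is mechanical: the paper inverts the centralizer (the centroid of a simple algebra being a field) to place $\alpha^*\beta$ itself in $C_v$ before applying Lemma~\ref{guarrin}, whereas you avoid $\tau^{-1}$ by multiplying the identity $\tau(\mu^*\nu)=kw$ through by $\nu'^*$ (resp.\ $\mu'$) so as to exhibit $\tau(w)$ as a scalar multiple of a ghost path (resp.\ path) in $C_w$ --- a harmless variation that changes nothing structural.
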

\begin{proof}
Fix a vertex $u$ and a centralizer $\T$. Applying \cite[ Proposition 3.1 ]{AMMS} to $\T(u)$ and taking into account that $L_\K(E)$ satisfies Condition (L), we get paths $\alpha,\beta$ such that 
$\alpha^*\T(u)\beta=kv\ne 0$ for some vertex $v$ and $k\in \K^\times$.
So $0\ne kv=\T(\alpha^*u\beta)=\T(\alpha^*\beta)$. 
Consequently $0\ne \alpha^*\beta=k\T^{-1}(v)$. Now
$\alpha^*\beta$ is either a path or a ghost path. 
In case $\alpha^*\beta=\gamma$ is a nontrivial path we get a contradiction since $\gamma\in\hbox{Path}(E)\cap C_v=\{v\}$ by
Lemma~\ref{guarrin}. Similarly we get a contradiction if $\alpha^*\beta=\gamma^*$, for a nontrivial path $\gamma$.
Therefore, the only possibility is $\alpha^*\beta=v$ and we conclude that
$\T(v)=kv$. Now, since $L_\K(E)$ is simple, the only nontrivial hereditary and saturated set of vertices is $E^0$, therefore $u\in\overline{\{v\}}$ and hence, by Lemma~\ref{spesso}, we get $\T(u)=ku$.
\end{proof}

\begin{proposition}\label{pasteldenata}
The centroid of a simple Leavitt path algebra $L_\K(E)$ is isomorphic to $\K$.
\end{proposition}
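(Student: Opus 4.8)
The plan is to build an explicit $\K$-algebra isomorphism $\Phi\colon \clz(L_\K(E))\to\K$ and show it is bijective, with Corollary~\ref{porco} supplying the essential input. The whole argument reduces to promoting the vertexwise scalars of that corollary to a single global scalar attached to each centralizer.

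First I would fix a centralizer $\T\in\clz(L_\K(E))$ and extract one scalar from it. By Corollary~\ref{porco}, for every vertex $u$ we have $\T(u)=k_u u$ for some $k_u\in\K$. The key step is to check that $k_u$ does not depend on $u$. Since $L_\K(E)$ is simple, the only nonempty hereditary and saturated subset of $E^0$ is $E^0$ itself; hence $\overline{\{v\}}=E^0$ for every vertex $v$. Fixing one vertex $v$ with $\T(v)=k_v v$ and applying Lemma~\ref{spesso}(ii) to each $u\in E^0=\overline{\{v\}}$ yields $\T(u)=k_v u$ with the \emph{same} scalar $k_v$. Thus there is a well-defined $k=k(\T)\in\K$ with $\T(u)=ku$ for all $u\in E^0$, and I set $\Phi(\T):=k$.

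Next I would verify that $\Phi$ is a unital $\K$-algebra homomorphism. $\K$-linearity is immediate from $(\T_1+\T_2)(u)=(k_1+k_2)u$ and $(c\T)(u)=(ck)u$. For multiplicativity, evaluating the composite on a vertex gives $(\T_1\circ\T_2)(u)=\T_1(k_2 u)=k_2\T_1(u)=k_1k_2\,u$, so $\Phi(\T_1\T_2)=\Phi(\T_1)\Phi(\T_2)$; the identity map sends each $u$ to $u$, so $\Phi(\mathrm{id})=1$.

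Finally I would establish bijectivity. For surjectivity, the scalar operator $k\cdot\mathrm{id}$ lies in $\clz(L_\K(E))$ and satisfies $\Phi(k\cdot\mathrm{id})=k$. For injectivity, suppose $\Phi(\T)=0$, so $\T(u)=0$ for every vertex $u$; since any basis element $\alpha\beta^*$ satisfies $\alpha=s(\alpha)\alpha$, we get $\T(\alpha\beta^*)=\T(s(\alpha))\alpha\beta^*=0$, whence $\T=0$ by linearity. The only real content sits in the second paragraph — upgrading the vertexwise scalars of Corollary~\ref{porco} to a single global scalar — and that is precisely where simplicity (through $\overline{\{v\}}=E^0$) and Lemma~\ref{spesso}(ii) enter; everything afterwards is formal.
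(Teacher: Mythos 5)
Your proposal is correct and follows essentially the same route as the paper: Corollary~\ref{porco} gives the scalar at one vertex, simplicity forces $\overline{\{v\}}=E^0$ so that Lemma~\ref{spesso}(ii) propagates the \emph{same} scalar to every vertex, and then the centralizer is identified with $k\cdot\mathrm{id}$. The only cosmetic difference is at the final step, where the paper invokes the uniqueness clause of Proposition~\ref{platano}, while you inline the equivalent computation $\T(\alpha\beta^*)=\T(s(\alpha))\alpha\beta^*$ to show a centralizer vanishing on $E^0$ is zero.
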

\begin{proof}
Take $\T\in \clz(L_\K(E))$. Since $L_\K(E)$ is simple,  for any vertex $v\in E^0$  we have $\overline{\{v\}}=E^0$ by \cite[Theorem 2.9.1]{AAS}. Fix $v\in E^0$. Corollary \ref{porco} implies that $\tau(v)=kv$ for some $k\in \K$. Now, for any
$u\in \overline{\{v\}}$,  by Lemma~\ref{spesso},  $\tau(u)=ku$ for $k$ as before. Now, apply Proposition \ref{platano} to get that $\tau(x)=kx$ for any $x\in L_\K(E)$.
Thus each centralizer is of the form $k 1_{L_\K(E)}$, for some $k\in \K$  (where $1_{L_\K(E)}$ denotes the identity map).
\end{proof}

\section{Centroids and direct limits}
Recall that a \emph{directed set} $(I,\le)$ is a set with a preorder relation $\le$
such that any two elements $i,j\in I$ have an upper bound, that is, there is an element $k\in I$ such that $i,j\le k$. We will use the notation $k\ge i$ meaning $i\le k$. A \emph{direct system} (or \emph{inductive system}) \emph{of objects} in a category is just a family $\{A_i\}_{i\in I}$ of objects labelled by a directed set $(I,\le)$, and a collection of
arrows $\{e_{ji}\}_{i\le j}$ such that $e_{ji}\colon A_i\to A_j$ satisfying
(1) $e_{ii}=1_{A_i}$ for any $i$, and (2) $e_{kj}e_{ji}=e_{ki}$ when 
$i\le j\le k$. An \emph{inverse} (or \emph{projective}) \emph{system }is a direct system in the opposite category. 
If $S=(\{A_i\}_i,\{e_{ij}\}_{i\le j})$ is a direct system, an object $A$ is a \emph{cocone of} $S$ if there are arrows $e_i\colon A_i\to A$ such $e_je_{ji}=e_i$ whenever $i\le j$. If $A$ and $B$ are cocones of $S$, we have arrows
$e_i\colon A_i\to A$ and $f_i\colon A_i\to B$. Then
an arrow $t\colon A\to B$ is said to be a \emph{homomorphism} from the cocone $A$ to the cocone $B$ if $t e_i=f_i$ for any $i\in I$. 
Similarly, one can define the notion of cone for an inverse systems of objects in a category. If $S$ is a direct system and $A$ is a cocone of $S$, we will say that $A$ is a \emph{direct limit of} $S$ is for any other cocone $B$ of $S$ there is a unique homomorphism of cocones from $A$ to $B$. The notion of inverse or projective limit is dual to this.  


Let $\K$ be a field. Recall that for an (associative) $\K$-algebra $A$  and any idempotent $e\in A$, we may write  
$$A=e A e\oplus e A f\oplus f A e\oplus f A f,$$ 
where $fA:=\{a-ea \ \vert \ a \in A\}$, $Af:=\{a-ae \ \vert\ a \in A\}$ and $fAf:=\{(a-ea)-(a-ea)e \ \vert \ a \in A\}$. This is called the Peirce decomposition of $A$ relative to the idempotent $e$. If $A$ is unital, then we may take $f=1-e$.
The subspaces $eAe$, $eAf$, $fAe$ and $fAf$ are called the $(1,1)$, $(1,0)$, $(0,1)$ and $(0,0)$ components of the Peirce decomposition of $A$ relative to $e$. Usually the notation for these subspaces is $A_{11}:=eAe$, $A_{10}:=eAf$, $A_{01}:=fAe$ and $A_{00}:=fAf$. Note that $A_{11}$ and $A_{00}$ are subalgebras of $A$. For any Peirce decomposition, there is a $\K$-linear map $\pi\colon A\to A_{11}$ such that $a\mapsto eae$. Of course $\pi$ is not a homomorphism 
of algebras but its restriction $\pi\vert_{A_{11}}$ is a homomorphism $A_{11}\to A_{11}$ (in fact the identity map on $A_{11}$).

Given a field $\K$ and $\K$-algebras $A$ and $B$, with $A$ unital, we will say that $A$ is \emph{nicely embedded in} $B$ if there is a monomorphism $i\colon A\to B$ such that
$i(A)$ coincides with the Peirce $(1,1)$-component of $B$ relative to the idempotent
$i(1_A)$. In this context, we will say that $i$ is a \emph{nice embedding}.
We must remark that when $B$ is unital the monomorphismm $i$ is not necessarilly unital. In case that $i(1_A)=1_B$, where $1_A$ and $1_B$ denote the unital elements in $A$ and $B$, respectively, then the $(1,1)$-component of $B$ relative to $1_B$ is the whole algebra $B$. As a consequence $i$ is an isomorphism. So the definition is interesting specially when $i$ is a monomorphism but does not map the unit of $A$ to the unit of $B$. This happens for instance in the canonical monomorphism
$M_n(\K)\to M_{n+1}(\K)$ such that $\tiny A\mapsto\begin{pmatrix}A & 0\cr 0 & 0\end{pmatrix}$.
\begin{lemma}\label{mulo}
Assume $A$ is nicely embedded in $B$ through a monomorphism $i\colon A\to B$. 
The restriction of $i\colon A\to i(A)$ is an isomorphism of algebras which we will denote $\theta$. Let $e=i(1_A)$ and $\pi\colon B\to A$ the linear map such that $\pi(b)=\theta^{-1}(ebe)$. Then $\pi$ induces  a homomorphism of algebras
$\sigma\colon \clz(B)\to \clz(A)$ such that $\T\mapsto \pi \T i$.
\end{lemma}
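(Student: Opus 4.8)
The plan is to check two things: that $\sigma(\T)=\pi\T i$ genuinely lies in $\clz(A)$ for each $\T\in\clz(B)$, and that the assignment $\T\mapsto\sigma(\T)$ is $\K$-linear and multiplicative. Before starting I would fix the dictionary coming from the nice embedding: writing $e=i(1_A)$, the hypothesis says $i(A)=eBe$ (the Peirce $(1,1)$-component of $B$), the map $\theta\colon A\to i(A)$ is an algebra isomorphism with $\theta(1_A)=e$, the composite $\pi i=\theta^{-1}\theta$ equals the identity $1_A$, and $i\pi\colon B\to B$ is precisely the Peirce projection $b\mapsto ebe$ onto $eBe$.

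The key step, which I would isolate first, is that the subalgebra $eBe$ is invariant under every centralizer of $B$. If $a\in eBe$ then $a=eae$, so, applying the centralizer identity twice to the idempotent $e$, $\T(a)=\T(e\,a\,e)=e\,\T(a)\,e$; thus $\T(a)$ is fixed by the projection $b\mapsto ebe$ and therefore lies in $eBe$. In particular $\T(i(x))\in i(A)$ for every $x\in A$, which lets me simplify
$$\sigma(\T)(x)=\pi\bigl(\T(i(x))\bigr)=\theta^{-1}\bigl(e\,\T(i(x))\,e\bigr)=\theta^{-1}\bigl(\T(i(x))\bigr).$$

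With this simplified formula the remaining verifications become formal. To see $\sigma(\T)\in\clz(A)$, for $x,y\in A$ I would use that $\theta=i$ is multiplicative to write $i(xy)=i(x)i(y)$, apply the centralizer identity $\T(i(x)i(y))=\T(i(x))\,i(y)=i(x)\,\T(i(y))$, observe that every factor already sits in the subalgebra $i(A)$, and transport the equalities back through the algebra homomorphism $\theta^{-1}$ (using $\theta^{-1}i=1_A$) to obtain $\sigma(\T)(xy)=\sigma(\T)(x)\,y=x\,\sigma(\T)(y)$. Linearity of $\sigma$ is immediate from that of $\pi$ and $i$. For multiplicativity I would compute, for $x\in A$, that $\sigma(\T_2)(x)=\theta^{-1}(\T_2(i(x)))$ and hence $i(\sigma(\T_2)(x))=\T_2(i(x))$, since $i\theta^{-1}$ is the identity on $i(A)$ and $\T_2(i(x))\in i(A)$; feeding this into $\sigma(\T_1)$ gives $\sigma(\T_1)\sigma(\T_2)(x)=\theta^{-1}(\T_1(\T_2(i(x))))=\sigma(\T_1\T_2)(x)$.

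The one genuinely delicate point is the invariance of the $(1,1)$-component, and it is exactly here that the \emph{equality} $i(A)=eBe$, rather than a mere inclusion, is indispensable: a centralizer of $B$ maps $i(A)$ into $eBe$, and only because $eBe$ is all of $i(A)$ can we guarantee the output returns to $i(A)$, so that $\theta^{-1}$ is applicable and $\sigma$ is both well defined and multiplicative. Once that invariance is in place, every other assertion reduces to routine bookkeeping with $\theta$, $\theta^{-1}$ and the centralizer identity.
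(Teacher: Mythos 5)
Your proposal is correct and follows essentially the same route as the paper: both arguments hinge on the fact that a centralizer of $B$ preserves the Peirce component $eBe=i(A)$ (which the paper asserts tersely and you justify explicitly via $\T(eae)=e\T(a)e$), and both verify multiplicativity using that $i\theta^{-1}$ is the identity on $i(A)$. Your version is in fact slightly more careful, since the paper's phrase ``$i\theta^{-1}(z)=z$ for any $z$'' holds only for $z\in i(A)$, a restriction you state correctly.
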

\begin{proof}
By definition, $\pi$ restricted to $B_{11}$ is an isomorphism $B_{11}\to A$.
Take $\T\in \clz(B)$ and let $S:=\pi \T i$. We prove first that $S\in \clz(A)$.
For any $x,y\in A$ we have 
$$S(xy)=\pi\left[\T(i(x)i(y))\right]=\pi\left[\T(i(x))i(y)\right]=\pi(\T(i(x))) \pi(i(y)).$$
where this last equality comes from the fact that $\T(i(x))\in B_{11}$ because $\T$ is
a centralizer. Thus $S(xy)=S(x)y$ since $\pi i=1_A$. Symmetrically, we can prove 
$S(xy)=xS(y)$ for any $x,y\in A$. So far, we have $S\in \clz(A)$ and we have a map
$\sigma\colon \clz(B)\to \clz(A)$ such that $\T\mapsto \pi \T i$. Next we prove that $\sigma$ is
a homomorphism of algebras. Take $\T,\T'\in \clz(B)$, then
$\sigma(\T\T')=\pi \T \T' i$. Take now $a\in A$, then 
$$\sigma(\T)\sigma(\tau')(a)=(\pi \T i)(\pi \tau'(i(a)))=(\pi \T i)(\theta^{-1}\tau'(i(a)))$$
and since $i \theta^{-1}(z)=z$ for any $z$ we get 
$$\sigma(\T)\sigma(\tau ')(a)=\pi \T \tau'(i(a))=\sigma(\T\tau')(a).$$
\end{proof}

\begin{lemma}\label{cernicalo}  Assume that $i_1\colon A\to B$ is a nice embedding with $\sigma_1\colon \clz(B)\to \clz(A)$ the induced $\K$-algebras homomorphism according to Lemma \ref{mulo}.
Let $i_2\colon B\to C$ be another nice embedding with $\sigma_2\colon \clz(C)\to \clz(B)$ the corresponding homomorphism between the centroids. Then $i_2i_1$ is a nice embedding with
associated homomorphism $\clz(C)\to \clz(A)$ given by $\sigma_1\sigma_2$.
\end{lemma}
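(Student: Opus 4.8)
The plan is to establish the two assertions separately: that the composite $i_2i_1$ is again a nice embedding, and that the homomorphism it induces on centroids via Lemma~\ref{mulo} coincides with $\sigma_1\sigma_2$. Throughout, set $e_1:=i_1(1_A)$ and $e_2:=i_2(1_B)$, so that by the definition of nice embedding $i_1(A)=e_1Be_1$ and $i_2(B)=e_2Ce_2$, and put $e:=(i_2i_1)(1_A)=i_2(e_1)$. The small but crucial fact I would record first is the relation $e=e_2e=ee_2$: since $i_2$ is an algebra homomorphism and $e_1=1_Be_1=e_11_B$ in $B$, applying $i_2$ gives $i_2(e_1)=i_2(1_B)i_2(e_1)=i_2(e_1)i_2(1_B)$, that is, $e$ is an idempotent of $C$ dominated by $e_2$.

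For the first assertion, $i_2i_1$ is a monomorphism (a composite of monomorphisms) and $A$ is unital, so only the image needs to be identified. Using that $i_2$ is a homomorphism I would write
$$(i_2i_1)(A)=i_2(e_1Be_1)=i_2(e_1)\,i_2(B)\,i_2(e_1)=e\,(e_2Ce_2)\,e,$$
and then collapse the right-hand side to $eCe$ by means of $ee_2=e_2e=e$. This shows that $(i_2i_1)(A)$ is precisely the $(1,1)$-Peirce component of $C$ relative to $e=(i_2i_1)(1_A)$, so $i_2i_1$ is a nice embedding; let $\sigma\colon\clz(C)\to\clz(A)$ denote its induced homomorphism.

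For the second assertion, let $\theta_1,\theta_2,\theta$ denote the corestrictions of $i_1,i_2,i_2i_1$ to their images and $\pi_1,\pi_2,\pi$ the associated projections of Lemma~\ref{mulo}. Since $\theta=\theta_2\theta_1$ on $A$, one has $\theta^{-1}=\theta_1^{-1}\theta_2^{-1}$ on $eCe$. The key computation, which I expect to be the main (though still routine) obstacle because of the bookkeeping forced by the non-unital maps, is the identity $\theta_2^{-1}(ece)=e_1\pi_2(c)e_1$ for $c\in C$; it follows by applying $i_2$ to $e_1\pi_2(c)e_1$, using $i_2(\pi_2(c))=e_2ce_2$ together with $ee_2=e_2e=e$. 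Substituting it into $\pi(c)=\theta^{-1}(ece)=\theta_1^{-1}\bigl(\theta_2^{-1}(ece)\bigr)=\theta_1^{-1}(e_1\pi_2(c)e_1)=\pi_1(\pi_2(c))$ yields $\pi=\pi_1\pi_2$.

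Finally, with $\pi=\pi_1\pi_2$ and the nice embedding $i_2i_1$ in hand, for any $\T\in\clz(C)$ I would compute
$$\sigma(\T)=\pi\,\T\,(i_2i_1)=\pi_1\pi_2\,\T\,i_2i_1=\pi_1\bigl(\pi_2\T i_2\bigr)i_1=\sigma_1\bigl(\sigma_2(\T)\bigr),$$
where the last step uses that $\sigma_2(\T)=\pi_2\T i_2$ lies in $\clz(B)$ by Lemma~\ref{mulo}, so that $\sigma_1$ genuinely applies to it. This gives $\sigma=\sigma_1\sigma_2$ and completes the argument. Everything beyond the idempotent relation $e=e_2e=ee_2$ and the identity $\theta_2^{-1}(ece)=e_1\pi_2(c)e_1$ is formal manipulation of the definitions from Lemma~\ref{mulo}.
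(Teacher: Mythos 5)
Your proof is correct and follows essentially the same route as the paper: the nice-embedding part is verified with exactly the same idempotent relation $i_2i_1(1_A)i_2(1_B)=i_2i_1(1_A)=i_2(1_B)i_2i_1(1_A)$, and the centroid part unwinds the definitions of Lemma~\ref{mulo} just as the paper does. The only (cosmetic) difference is that you package the second half as the operator identity $\pi=\pi_1\pi_2$, valid on all of $C$, whereas the paper verifies the equality $\sigma(S)(a)=\sigma_1(\sigma_2(S))(a)$ pointwise by showing both sides equal $\theta_1^{-1}\theta_2^{-1}\bigl(S(i_2i_1(a))\bigr)$; the computations involved are the same.
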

\begin{proof}
We know $i_1(A)=i_1(1_A)Bi_1(1_A)$ and $i_2(B)=i_2(1_B)Ci_2(1_B)$.
From the first equality $i_2i_1(A)=i_2i_1(1_A)i_2(B)i_2i_1(1_A)$ and
so $$i_2i_1(A)=i_2i_1(1_A)(i_2(1_B)Ci_2(1_B))i_2i_1(1_A).$$
But $i_2i_1(1_A) i_2(1_B)= i_2 i_1(1_A)=  i_2(1_B)i_2i_1(1_A)$.
So $$i_2i_1(A)=i_2i_1(1_A)C i_2i_1(1_A).$$
This proves that $i_2i_1$ is a nice embedding. 
Consider now $\sigma_1\colon\clz(B)\to\clz(A)$ such that $\sigma_1(\T)(a)=
\theta_1^{-1}[i_1(1_A)\T(i_1(a))i_1(1_A)]$ where $\theta_1$ is the isomorphism
$\theta_1\colon A\to i(A)$ such that $a\mapsto i_1(a)$.
We also have $\sigma_2\colon \clz(C)\to\clz(B)$ such that 
$\sigma_2(S)(b)=\theta_2^{-1}[i_2(1_B)S(i_2(b))i_2(1_B)]$ being 
$\theta_2\colon B\to i_2(B)$ the isomorphism $b\mapsto i_2(b)$.

Consider now the homomorphism
$\sigma_1\sigma_2\colon\clz(C)\to \clz(A)$ such that for any $S\in\clz(C)$
and $a\in A$, we have $$\sigma_1\sigma_2(S)(a)=(\theta_2\theta_1)^{-1}[i_2i_1(1_A)S(i_2i_1(a))i_2i_1(1_A)]=$$ 
$$\theta_1^{-1}[i_1(1_A)\theta_2^{-1}S(i_2i_1(a))i_1(1_A)=\theta_1^{-1}\theta_2^{-1}S(i_2i_1(a))$$
On the other hand
$$\sigma_1(\sigma_2(S))(a)=\theta_1^{-1}[i_1(1_A)\sigma_2(S)(i_1(a)) i_1(1_A)]=
\theta_1^{-1}[\sigma_2(S)(i_1(a))]=$$
$$\theta_1^{-1}\theta_2^{-1}
[i_2(1_B)S(i_2i_1(a)) i_2(1_B)]=\theta_1^{-1}\theta_2^{-1}
[S(i_2i_1(a))]$$
the last equality coming from the fact that $S(i_2(x))\in i_2(B)=i_2(1_B)Ci_2(B)$.
\end{proof}

Assume that $(I,\le)$ is a directed set and $(A_{i},e_{ji})_{i\le j}$
a direct system of unital $\K$-algebras such that every $e_{ji}\colon A_i\to A_j$ is a nice embedding of $A_i$ in $A_j$. Under this hypothesis we have:
\begin{lemma}\label{naca}
If $\displaystyle A=\lim_\to A_i$, the canonical map $e_i\colon A_i\to A$ is also a nice
embedding.
\end{lemma}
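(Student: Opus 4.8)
The plan is to verify the two defining properties of a nice embedding for the canonical map $e_i\colon A_i\to A$: that $e_i$ is a monomorphism, and that its image equals the Peirce $(1,1)$-component of $A$ relative to the idempotent $u_i:=e_i(1_{A_i})$, i.e. $e_i(A_i)=u_iAu_i$. Since the canonical maps of a direct limit are algebra homomorphisms, $u_i$ is indeed an idempotent of $A$. Throughout I will use the two standard facts about the direct limit of algebras: every element of $A$ is of the form $e_j(b)$ for some index $j$ and some $b\in A_j$, and $e_i=e_je_{ji}$ whenever $i\le j$.

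For injectivity, suppose $e_i(a)=0$. By the description of the direct limit there is some $j\ge i$ with $e_{ji}(a)=0$; since every $e_{ji}$ is a nice embedding and hence a monomorphism, we conclude $a=0$. Thus $e_i$ is injective.

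The inclusion $e_i(A_i)\subseteq u_iAu_i$ is immediate: for $a\in A_i$ we have $e_i(a)=e_i(1_{A_i}a1_{A_i})=u_ie_i(a)u_i$. The core of the proof is the reverse inclusion. Take $x\in u_iAu_i$. Writing $x=e_k(b)$ for some index $k$ and $b\in A_k$, and choosing $j\ge i,k$, we may replace $b$ by $e_{jk}(b)$ and assume $x=e_j(b)$ with $j\ge i$ and $b\in A_j$. Using $e_i=e_je_{ji}$ we compute
$$u_ixu_i=e_j\bigl(e_{ji}(1_{A_i})\,b\,e_{ji}(1_{A_i})\bigr).$$
Now the element $e_{ji}(1_{A_i})\,b\,e_{ji}(1_{A_i})$ lies in the Peirce $(1,1)$-component of $A_j$ relative to $e_{ji}(1_{A_i})$. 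Because $e_{ji}\colon A_i\to A_j$ is a nice embedding, this component is exactly $e_{ji}(A_i)$, so there exists $a\in A_i$ with $e_{ji}(a)=e_{ji}(1_{A_i})\,b\,e_{ji}(1_{A_i})$. Applying $e_j$ and using $e_je_{ji}=e_i$ yields $u_ixu_i=e_i(a)\in e_i(A_i)$. Since $x=u_ixu_i$, we obtain $x\in e_i(A_i)$, and hence $u_iAu_i\subseteq e_i(A_i)$.

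Combining both inclusions gives $e_i(A_i)=u_iAu_i$, which together with injectivity shows that $e_i$ is a nice embedding. The one delicate point, and the step where the nice-embedding hypothesis on the bonding maps is essential, is pulling the element $e_{ji}(1_{A_i})\,b\,e_{ji}(1_{A_i})$ back into $e_{ji}(A_i)$; everything else is formal manipulation with the universal properties of the direct limit.
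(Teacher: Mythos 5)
Your proof is correct and follows essentially the same route as the paper: injectivity is reduced to injectivity of the bonding maps, one inclusion is formal, and the reverse inclusion $u_iAu_i\subseteq e_i(A_i)$ is obtained by representing an arbitrary element in some $A_j$ with $j\ge i$ and using the nice-embedding hypothesis on $e_{ji}$ to pull $e_{ji}(1_{A_i})\,b\,e_{ji}(1_{A_i})$ back into $e_{ji}(A_i)$. The only difference is presentational: the paper works with the explicit equivalence-class construction $[(a,k)]$ of the colimit, while you invoke the same facts abstractly.
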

\begin{proof}
Recall that we can take $A=\cup_{i\in I} (A_i\times\{i\})/\equiv$ where the equivalence relation
$\equiv$ is $(x,i)\equiv (y,j)$ if and only if there is some $k$ with 
$i,j\le k$ such that $e_{ki}(x)=e_{kj}(y)$. Then, the induced map $e_i\colon A_i\to A$ is given by $x\mapsto [(x,i)]$ where $[\quad ]$ denotes equivalence class. We prove first that $e_i$ is a monomorphism: if $(x,i)\equiv (0,j)$
there is some $k\in I$ with $i,j\le k$ and $e_{ki}(x)=e_{kj}(0)=0$. This implies $x=0$ since each $e_{ki}$ is a monomorphism. Now we must prove that
$e_i(A_i)=uAu$ with $u=e_i(1_i)$ and $1_i$ the unit of $A_i$. Consider an
arbitrary $[(a,k)]\in A$, then $a\in A_k$ and we may take $k\ge i$. Since all
the maps of the direct systems are nice embeddings we have $e_{ki}(1_i)A_ke_{ki}(1_i)=e_{ki}(A_i)$. So 
$$u[(a,k)]u=[(1_i,i)][(a,k)][(1_i,i)]=[(e_{ki}(1_i),k)][(a,k)][(e_{ki}(1_i),k)]=$$ $$[(e_{ki}(1_i)ae_{ki}(1_i),k)].$$
But $e_{ki}(1_i)ae_{ki}(1_i)\in e_{ki}(1_i)A_ke_{ki}(1_i)=e_{ki}(A_i)$.
So $e_{ki}(1_i)ae_{ki}(1_i)=e_{ki}(z)$ for some $z\in A_i$.
Consequently 
$$[(e_{ki}(1_i)ae_{ki}(1_i),k)]=[(e_{ki}(z),k)]=[(z,i)]\in e_i(A_i).$$
We have proved $uAu\subset e_i(A_i)$. The other relation is trivial.
\end{proof}

Next, we keep on assuming that $(I,\le)$ is a directed set and $(A_{i},e_{ji})_{i\le j}$ a direct system of unital $\K$-algebras such that every $e_{ji}\colon A_i\to A_j$ is a nice embedding of $A_i$ in $A_j$.

\begin{lemma}\label{gorrino}
The induced $\K$-algebra homomorphisms
$\sigma_{ij}\colon \clz(A_j)\to \clz(A_i)$ form an inverse system of algebras and 
\begin{equation}\label{guarro}
\clz(\lim_{\to} A_i)\cong \lim_{\leftarrow} \clz(A_i).
\end{equation}
\end{lemma}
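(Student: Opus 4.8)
The plan is to verify that $(\clz(A_i),\sigma_{ij})$ is an inverse system and then to exhibit an explicit isomorphism between $\clz(A)$, where $A=\lim_\to A_i$, and the standard realization $\lim_\leftarrow\clz(A_i)=\{(\T_i)_i\in\prod_i\clz(A_i)\ \vert\ \sigma_{ij}(\T_j)=\T_i\text{ whenever }i\le j\}$. For the inverse system, I would first note that $e_{ii}=1_{A_i}$ forces $\sigma_{ii}=1_{\clz(A_i)}$ by the formula in Lemma \ref{mulo}, and then, for $i\le j\le k$, invoke the direct system identity $e_{ki}=e_{kj}e_{ji}$ together with Lemma \ref{cernicalo} (taking $i_1=e_{ji}$ and $i_2=e_{kj}$): the homomorphism induced by $e_{ki}$ is on the one hand $\sigma_{ik}$ by definition and on the other hand $\sigma_{ij}\sigma_{jk}$ by that lemma, whence $\sigma_{ik}=\sigma_{ij}\sigma_{jk}$.

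The comparison map rests on one elementary fact that I would isolate first: a centralizer preserves Peirce corners, i.e. if $u$ is an idempotent and $a=uau$ then $\T(a)=\T(uau)=u\T(au)=u\T(a)u\in uAu$. By Lemma \ref{naca} each canonical map $e_i\colon A_i\to A$ is a nice embedding, hence induces $\sigma_i\colon\clz(A)\to\clz(A_i)$ via Lemma \ref{mulo}; applying Lemma \ref{cernicalo} to $e_i=e_je_{ji}$ gives the cone relations $\sigma_i=\sigma_{ij}\sigma_j$, so $\Phi\colon\clz(A)\to\lim_\leftarrow\clz(A_i)$, $\T\mapsto(\sigma_i(\T))_i$, is a well-defined $\K$-algebra homomorphism (each $\sigma_i$ being one). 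Writing $u_i=e_i(1_i)$, so that $e_i(A_i)=u_iAu_i$ by niceness, the corner-preservation fact applied to $e_i(x)\in u_iAu_i$ collapses the formula of Lemma \ref{mulo} to $\T(e_i(x))=e_i(\sigma_i(\T)(x))$ for all $x\in A_i$; the identical computation carried out for $e_{ji}$ turns the relation $\sigma_{ij}(\T_j)=\T_i$ into the key compatibility $e_{ji}(\T_i(x))=\T_j(e_{ji}(x))$.

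With these identities in hand the bijectivity of $\Phi$ is straightforward. For injectivity, $\sigma_i(\T)=0$ for all $i$ forces $\T(e_i(x))=e_i(\sigma_i(\T)(x))=0$, and since $A=\bigcup_i e_i(A_i)$ every element of $A$ is of the form $e_i(x)$, so $\T=0$. For surjectivity, given a compatible family $(\T_i)_i$ I would define $\T(a):=e_i(\T_i(x))$ whenever $a=e_i(x)$; choosing $k\ge i,j$ and using the compatibility together with $e_ke_{ki}=e_i$ and the injectivity of $e_k$ shows this is independent of the representative, so $\T$ is a well-defined linear map. Selecting a common index for any two elements $a=e_i(x)$, $b=e_i(y)$ and using that $\T_i$ is a centralizer gives $\T(ab)=\T(a)b=a\T(b)$, so $\T\in\clz(A)$, while the displayed formula yields $\sigma_i(\T)=\T_i$. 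Hence $\Phi$ is an isomorphism, establishing \eqref{guarro}.

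I expect the main obstacle to be the corner-preservation identity and its two refinements, since these are what let me move a centralizer back and forth between a nicely embedded subalgebra and the ambient algebra; in particular, upgrading $\sigma_{ij}(\T_j)=\T_i$ to the embedding-compatible form $e_{ji}(\T_i(x))=\T_j(e_{ji}(x))$ is exactly the ingredient that makes the reconstruction of $\T$ in the surjectivity step well defined. The remaining work is the routine bookkeeping of direct and inverse limits.
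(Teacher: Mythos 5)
Your proof is correct and follows essentially the same route as the paper's: both derive the inverse system and the cone maps $\sigma_i$ from Lemmas \ref{naca} and \ref{cernicalo}, and both reconstruct a centralizer of $\lim_{\to}A_i$ from a compatible family $(\T_i)$ via the relation $\T\circ e_i=e_i\circ\T_i$, resting on the same key fact that centralizers preserve the Peirce corners $e_i(A_i)$. The only difference is packaging: the paper verifies the universal property against an arbitrary test cone $U$, whereas you exhibit an explicit isomorphism onto the thread realization $\{(\T_i)_i\in\prod_i\clz(A_i)\ \vert\ \sigma_{ij}(\T_j)=\T_i \hbox{ for } i\le j\}$, so that your surjectivity step is precisely the paper's construction of $t(u)$ specialized to the tautological cone, and your injectivity check plays the role of uniqueness there.
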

\begin{proof}
As before denote $A=\displaystyle \lim_\to A_i$. By Lemma \ref{naca} the canonical
monomorphisms $e_i\colon A_i\to A$ are nice embeddings so that they induce
algebra homomorphisms $\sigma_i\colon \clz(A)\to \clz(A_i)$. But on the other
hand any $e_{ji}\colon A_i\to A_j$ induces a homomorphism $\sigma_{ji}\colon\clz(A_j)\to\clz(A_i)$ and by Lemma \ref{cernicalo} we have
$\sigma_{ij}\sigma_{jk}=\sigma_{ik}$ when $i\le j\le k$. Let us prove now that
$\displaystyle \lim_\leftarrow\clz(A_i)\cong\clz(A)$. We know that $e_j e_{ij}=e_i$ whenever $i\le j$. This implies (by Lemma \ref{cernicalo}) that 
$\sigma_{ij}\sigma_j=\sigma_i$. Next, we have to prove that for
any $\K$-algebra $U$ and algebra homomorphisms $t_i\colon U\to\clz(A_i)$
satisfying $\sigma_{ji}t_i=t_j$ for $j\le i$, there is a unique algebra homomorphism $t\colon U\to\clz(A)$ such that $\sigma_i t=t_i$. So, in order to define $t$ take an arbitrary $u\in U$. Let us prove the commutativity of the diagrams:
\[
\xygraph{
!{<0cm,0cm>;<1cm,0cm>:<0cm,1cm>::}
!{(0,0)}*+{A_j}="a"
!{(1.5,0)}*+{A_i}="b"
!{(1.5,-1.3)}*+{A}="c"
"a":^{e_{ij}}"b"
"b":^{e_i t_i(u)}"c"
"a":_{e_jt_j(u)}"c"
}\]
when $j\le i$. We have to prove
$e_i t_i(u)e_{ij}=e_jt_j(u)$ and we
know that $t_j(u)=\pi_{ji}t_i(u)e_{ij}$ where 
$\pi_{ji}\colon A_i\to A_j$ satisfies $\pi_{ji}e_{ij}=1_{A_j}$. So 
$e_j t_j(u)=e_j\pi_{ji}t_i(u)e_{ij}$. But for any $x\in A_j$
$$e_jt_j(u)=e_j\pi_{ji}t_i(u)e_{ij}(x)=e_j\pi_{ji}e_{ij}(z)=e_j(z)$$
where $t_i(u)(e_{ij}(x))=e_{ij}(z)$ for some $z$ (because $t_i(u)$ is a centralizer and $e_{ij}$ a nice embedding). On the other hand
$$e_it_i(u)e_{ij}(x)=e_ie_{ij}(z)=e_j(z)$$
whence $e_jt_j(u)=e_it_i(u)e_{ij}$. Then, by the universal property of direct limits (taking into account that the underlying vector space of $A$ is the limit in the category of vector spaces of the  direct system of underlying vector spaces), there is a unique linear map $t(u)\colon A\to A$ such that 
$t(u)e_i=e_it_i(u)$ for any $i$. Next we prove that $t(u)\in\clz(A)$: taking $x,y\in A$ we know that there is some $i\in I$ such that 
$x,y\in \hbox{Im}(e_i)$. Thus $x=e_i(x')$ and $y=e_i(y')$ and so
$$t(u)(xy)=t(u)e_i(x'y')=e_i t_i(u) (x'y')=e_i[t_i(u)(x')y']=$$
$$e_i(t_i(u)(x'))e_i(y')=t(u)(e_i(x')) y=(t(u)x)y$$ and similarly
$t(u)(xy)=x t(u)(y)$ for any $x,y\in A$. Consequently $t(u)\in\clz(A)$.
Finally we prove that $\sigma_i t=t_i$ for any $i$. Since $e_i\colon A_i\to A$ is a nice embedding we have the Peirce decomposition of $A$ relative to $e_i(1)$, that is, $A=A_{11}\oplus A_{10}\oplus A_{01}\oplus A_{00}$ with $A_{11}=e_i(A_i)$.
Then $\pi_i\colon A\to A_i$ is the canonical epimorphism.  
If we take an $u\in U$,
then $\sigma_i(t(u))=\pi_i t(u) e_i=\pi_i e_i t_i(u)=t_i(u)$. 
This finishes the proof that $\clz(A)\cong\displaystyle \lim_{\leftarrow}\clz(A_i)$.
\end{proof}

Let us give now two examples that may illustrate the use of the formula (\ref{guarro}). Recall from \cite[Definition 3.2]{AAS1} that a row-finite graph $E$ is called a {\it comet} if it has exactly one cyle $c$, $T(v)\cap c^0\neq \emptyset$ for every $v\in E^0$, and every infinite path ends in the cycle $c$.

 
 For instance the Leavitt path algebra $A$ associated to the graph:

\[
\xygraph{
!{<0cm,0cm>;<1cm,0cm>:<0cm,1cm>::}
!{(-1,0)}*+{\cdots}="a"
!{(0,0)}*+{\bullet}="b"
!{(1,0)}*+{\bullet}="c"
!{(2,0)}*+{\cdots}="d"
!{(3,0)}*+{\bullet}="e"
"a":"b" "b":"c" "c":"d"
"d":"e" "e" :@(ru,rd) "e"
}\]

\noindent This algebra is not simple but it is prime. To compute its centroid, we know 
$$A\cong M_\infty(\K[x,x^{-1}])\cong\lim_{\to}M_n(\K[x,x^{-1}])$$
and $\clz(M_n(\K[x,x^{-1}])\cong \K[x,x^{-1}]$. So, by Lemma \ref{gorrino} we have

\begin{equation}\label{garrulo}
    \clz(A)=\lim_{\leftarrow} \clz(M_n(\K[x,x^{-1}])=
\lim_{\leftarrow}\K[x,x^{-1}]=\K[x,x^{-1}].
\end{equation}

Let us try with the centroid of $A=L_\K(E)$ when $E$ is the graph:
\[
\xygraph{
!{<0cm,0cm>;<1cm,0cm>:<0cm,1cm>::}
!{(-1,0)}*+{\cdots}="a"
!{(0,0)}*+{\bullet}="b"
!{(1,0)}*+{\bullet}="c"
!{(2,0)}*+{\cdots}="d"
!{(3,0)}*+{\bullet}="g"
!{(4,0)}*+{\bullet}="e"
"a":"b" "b":"c" "c":"d"
"d":"g" "e" :@(ru,rd) "e"
"g":"e"
"g" :@(ul,ur) "g"
}\]
This algebra is prime but not simple.
Roughly speaking $A=\lim_\to A_n$ where $A_n$ is the Leavitt path algebra of the finite graph $E_n$:
\[
\xygraph{
!{<0cm,0cm>;<1cm,0cm>:<0cm,1cm>::}
!{(0,0)}*+{\bullet}="b"
!{(0,-0.4)}*+{v_n}
!{(1,0)}*+{\bullet}="c"
!{(1,-0.4)}*+{v_{n-1}}
!{(2,0)}*+{\cdots}="d"
!{(3,0)}*+{\bullet}="g"
!{(3,-0.4)}*+{v_0}
!{(4,0)}*+{\bullet}="e"
!{(4,-0.4)}*+{v_{\tiny -1}}
"b":"c" "c":"d"
"d":"g" "e" :@(ru,rd) "e"
"g":"e"
"g" :@(ul,ur) "g"
}\]
and by the finiteness of $E_n$ we have $\clz(A_n)=Z(A_n)=\K 1$. So
$$\clz(A)=\lim_{\leftarrow}\clz(A_n)\cong \lim_{\leftarrow} \K\cong \K.$$ 

Consider finally the graph 
\[
\xygraph{
!{<0cm,0cm>;<1cm,0cm>:<0cm,1cm>::}
!{(-1,0)}*+{\cdots}="a"
!{(0,0)}*+{\bullet}="b"
!{(1,0)}*+{\bullet}="c"
!{(2,0)}*+{\cdots}="d"
!{(3,0)}*+{\bullet}="g"
"a":"b" "b":"c" "c":"d"
"d":"g" 
"g" :@(ul,ur) "g"
"g" :@(dl,dr) "g"
}\]
again with an infinite \lq\lq tail\rq\rq\ .
Its Leavitt path algebra $A$ is the direct limit of the Leavitt path algebras of finite graphs:
\[
\xygraph{
!{<0cm,0cm>;<1cm,0cm>:<0cm,1cm>::}
!{(0,0)}*+{\bullet}="b"
!{(1,0)}*+{\bullet}="c"
!{(2,0)}*+{\cdots}="d"
!{(3,0)}*+{\bullet}="g"
"b":"c" "c":"d"
"d":"g" 
"g" :@(ul,ur) "g"
"g" :@(dl,dr) "g"
}\]
with a \lq\lq finite\rq\rq\ tail. It is not difficult to realize that the centroid of these algebras (which agrees with their centers) is isomorphic to $\K$. Hence the centroid $\clz(A)$ is the inverse limit of a projective system in which all the algebras are $\K$. Thus $\clz(A)\cong \K$ again.

\begin{proposition}\label{gorrona}
Let $A=L_{\K}(E)$ be the Leavitt path algebra associated to a row-finite graph $E$ wich is a comet. Then $\clz(A)$ is isomorphic to $\K[x,x^{-1}]$.
\end{proposition}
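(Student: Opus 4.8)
The plan is to realize $A=L_\K(E)$ as a direct limit of matrix algebras over $\K[x,x^{-1}]$ joined by nice corner embeddings, and then to read off the centroid directly from Lemma~\ref{gorrino}. The point of passing through the matrix picture is that it trivializes the bookkeeping about which finite subgraphs one uses: the direct system will simply be the one indexing finite submatrix corners, exactly as in the first worked example before the statement, where $A\cong M_\infty(\K[x,x^{-1}])\cong\lim_\to M_n(\K[x,x^{-1}])$.

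First I would invoke the structure theorem for comets (\cite{AAS1}, \cite{AAS}): since $E$ is a row-finite comet, its unique cycle $c$ has no exits (an exit together with the comet property $T(v)\cap c^0\ne\emptyset$ would manufacture a second cycle), and consequently one has an isomorphism $A\cong M_\Lambda(\K[x,x^{-1}])$, where $\Lambda$ is the (possibly infinite) index set of paths ending at a fixed vertex of $c$, $M_\Lambda$ denotes the matrices over $\Lambda\times\Lambda$ with only finitely many nonzero entries, and the distinguished invertible generator $x$ corresponds to the class of the exit-free cycle $c$. The single-loop example preceding the statement is the case $\Lambda=\N$. I would then write $M_\Lambda(R)=\lim_\to M_F(R)$ with $R=\K[x,x^{-1}]$, the limit taken over the directed set of finite subsets $F\subseteq\Lambda$, with connecting maps the corner inclusions $a\mapsto\operatorname{diag}(a,0)$. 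Each such inclusion is a nice embedding in the sense of the text: its image is exactly $p\,M_{F'}(R)\,p$ for the idempotent $p=\sum_{i\in F}e_{ii}=i(1_{M_F(R)})$, i.e. the $(1,1)$-Peirce component relative to $p$. This is the higher analogue of the displayed embedding $M_n(\K)\to M_{n+1}(\K)$ in the preliminaries.

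With the direct system of unital algebras and nice embeddings in hand, Lemma~\ref{gorrino} yields $\clz(A)\cong\lim_\leftarrow\clz(M_F(R))$. Each $M_F(R)$ is unital and $R$ is commutative, so by the isomorphism $Z(\,\cdot\,)\cong\clz(\,\cdot\,)$ for unital algebras recalled in the preliminaries we obtain $\clz(M_F(R))\cong Z(M_F(R))=R\cdot I\cong\K[x,x^{-1}]$. It remains to identify the bonding maps. By the formula of Lemma~\ref{mulo}, the homomorphism $\sigma_{FF'}$ induced by the corner inclusion sends a centralizer $\T$ on $M_{F'}(R)$ to $\pi\T i$; applied to the centralizer ``multiply by $f(x)$'' it returns ``multiply by $f(x)$'' on the corner $M_F(R)$. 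Thus under the identifications $\clz(M_F(R))\cong\K[x,x^{-1}]$ every $\sigma_{FF'}$ is the identity, the inverse system is constant equal to $\K[x,x^{-1}]$ with identity bonding maps, and its inverse limit is $\K[x,x^{-1}]$. Therefore $\clz(A)\cong\K[x,x^{-1}]$.

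The main obstacle is the first step: producing the matrix realization $A\cong M_\Lambda(\K[x,x^{-1}])$ (equivalently, exhibiting $A$ as a direct limit of finite-comet algebras $\cong M_{d_n}(\K[x,x^{-1}])$ through nice embeddings), and, crucially, verifying that the copy of $\K[x,x^{-1}]$ generated by the exit-free cycle is carried \emph{identically} along the connecting maps. It is this last fact --- that the bonding maps of the inverse system are genuine identities rather than merely injections --- that forces the limit to be all of $\K[x,x^{-1}]$ and not some proper subalgebra; contrast this with the two intervening examples, where a second cycle or extra loop destroys the constancy and collapses the inverse limit to $\K$.
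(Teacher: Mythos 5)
Your proposal is correct and takes essentially the same route as the paper: it invokes the structure theorem $L_\K(E)\cong M_\Lambda(\K[x,x^{-1}])$ for row-finite comets, realizes this algebra as a direct limit of finite matrix corners via nice embeddings, and applies Lemma~\ref{gorrino} to identify $\clz(A)$ with the inverse limit of the centroids, exactly as the paper does through formula \eqref{garrulo} (the paper merely treats finite $\Lambda$ separately via formula \eqref{ugna}, which your uniform treatment over finite subsets $F\subseteq\Lambda$ subsumes). Your explicit verification that the bonding maps of the inverse system are identities under the identifications $\clz(M_F(\K[x,x^{-1}]))\cong\K[x,x^{-1}]$ is a detail the paper leaves implicit, but it is a refinement of the same argument rather than a different one.
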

\begin{proof}
By \cite[Proposition 3.4]{AAS}
we have $L_{\K}(E)\cong M_{\Lambda}(\K[x,x^{-1}])$ wiht $\Lambda $ finite or infinite. In the first case,  formula \eqref{ugna} gives that $\clz(A)=Z(A)=\K[x,x^{-1}]$.
In the second case, applying formula \eqref{garrulo} we have $\clz(A)=\K[x,x^{-1}]$.
\end{proof}

To finish this section we analyze the centroid of a graded simple, non-simple, Leavitt path algebra.

\begin{proposition}\label{PamDeQueijo}
Let $L_\K(E)$ be a row-finite, graded simple, non-simple, Leavitt path algebra. Then $E$ is a comet with $L_\K(E)\cong M_{\Lambda}(\K[x,x^{-1}])$ and its centroid is isomorphic to $\K[x, x^{-1}]$. 
\end{proposition}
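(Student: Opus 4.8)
The plan is to reduce everything to the already-proved Proposition~\ref{gorrona} by first showing that the graph $E$ must be a comet; once that is established, both the isomorphism $L_\K(E)\cong M_{\Lambda}(\K[x,x^{-1}])$ and the computation of the centroid follow immediately from results in hand. So the real content is to extract the comet structure from the hypotheses.

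First I would produce a cycle without exit. Since $E$ is row-finite it has no breaking vertices, so the graded ideals of $L_\K(E)$ are in bijection with the elements of $\mathcal{H}_E$ (see \cite{AAS}); hence graded simplicity says that $\emptyset$ and $E^0$ are the only hereditary and saturated subsets. If moreover every cycle had an exit, that is, if Condition~(L) held, then $L_\K(E)$ would be simple by \cite[Theorem 3.1.10]{AAS}. As it is not, there must exist a cycle $c$ with no exit.

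Next I would turn this single exitless cycle into the full comet data. Because $c$ has no exit, every edge leaving a vertex of $c^0$ is a cycle edge, so $T(c^0)=c^0$ and $c^0$ is hereditary; being nonempty, its hereditary and saturated closure is forced by graded simplicity to be all of $E^0$, whence $E^0=\overline{c^0}=\bigcup_{n\ge 0}\Lambda^n(c^0)$. I would then assign to each $v\in E^0$ its \emph{rank}, the least $n$ with $v\in\Lambda^n(c^0)$, and record two features of the saturation process: rank $0$ is exactly $c^0$, and a vertex of positive rank $n$ is regular with all of its out-edges landing at vertices of rank at most $n-1$. From these the three comet conditions read off directly: every vertex connects to $c$ (descending induction on rank, using that positive-rank vertices are regular hence emit edges); any cycle must have constant rank along its edges and therefore lie in $c^0$, where the only emitted edge is the $c$-edge, forcing it to coincide with $c$, so $c$ is the unique cycle; and every infinite path has strictly decreasing rank until it reaches $0$, after which only the cycle edge is available, so it traverses $c$ forever and thus ends in $c$. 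This shows $E$ is a comet.

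With $E$ identified as a row-finite comet, I would invoke \cite[Proposition 3.4]{AAS} to obtain $L_\K(E)\cong M_{\Lambda}(\K[x,x^{-1}])$ with $\Lambda$ finite or infinite, and then Proposition~\ref{gorrona} yields $\clz(L_\K(E))\cong\K[x,x^{-1}]$, completing the proof. The main obstacle is the third paragraph: converting the existence of one exitless cycle into the comet structure. The rank filtration coming from the saturation chain $\Lambda^n(c^0)$ is the essential device here, since it is precisely the uniqueness of the cycle and the claim that every infinite path ends in $c$ that require this bookkeeping rather than a one-line citation.
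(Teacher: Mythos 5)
Your proposal is correct, and its skeleton matches the paper's: graded simplicity forces $\emptyset$ and $E^0$ to be the only hereditary and saturated sets, non-simplicity together with the simplicity theorem \cite[Theorem 3.1.10]{AAS} produces a cycle $c$ without exits, and the saturation filtration $\Lambda^n(c^0)$ of $\overline{c^0}=E^0$ is the engine behind the comet structure in both arguments. The differences are in execution, and they are worth noting. The paper establishes cycle uniqueness in two separate steps --- first uniqueness of the exitless cycle via a cardinality claim about $\mathcal{H}_E$, then an induction showing that any further cycle $d$ (necessarily with exits) satisfies $d^0\cap \Lambda^n=\emptyset$ for all $n$ --- whereas your rank function treats all cycles uniformly: rank is non-increasing along every edge and strictly decreasing out of positive-rank vertices, so it is constant, hence identically zero, along any cycle, which pins the cycle inside $c^0$ and forces it to be a rotation of $c$. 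The same rank bookkeeping gives you the infinite-path condition essentially for free, where the paper instead argues via $\overline{\lambda^0}=E^0$ and \cite[Lemma 1.2]{CMMSS}; your version is more self-contained. Finally, for the algebra structure the paper applies \cite[Theorem 2.7.3]{AAS} to exhibit $L_\K(E)$ as a direct sum of matrix algebras over $\K[x,x^{-1}]$ and then invokes primeness (graded prime iff prime, \cite[Proposition II.1.4]{NO}) to collapse the sum to a single summand, while you pass directly through the comet structure to \cite[Proposition 3.4]{AAS} and then quote Proposition~\ref{gorrona} for the centroid; this bypasses the primeness detour entirely and is a legitimate reduction, since Proposition~\ref{gorrona} is proved before this statement. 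In short: same strategy, but your unified rank argument and the direct reduction to Proposition~\ref{gorrona} make for a cleaner write-up, while the paper's route displays the connection with the general structure theory of graded ideals.
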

\begin{proof}
By \cite[Corollary 2.5.15]{AAS}, the only hereditary and saturated subsets of $E^0$ are $\emptyset$ and $E^0$. Since $L_\K(E)$ is graded simple but not simple, there exists a  cycle without exits. There is only one cycle without exits because, otherwise, the cardinal of $\mathcal{H}_E$ would be strictly greater than 2, a contradiction. Denote by $c$ this unique cycle. We claim that there are no more cycles. Assume, on the contrary, that $d$ is another cycle, necessarily with an exit. Following the same notation as in \cite[Definition 2.0.6]{AAS}, for $X=c^0$, we consider the sets $X_n$.
We prove that $d^0\cap X_n=\emptyset$ for any $n>=0$. 
Let $u\in d^0$. This vertex cannot be in $X_0$ because $d^0\neq c^0$. Assume now $d^0\cap X_n=\emptyset$ and prove that $d^0\cap X_{n+1}=\emptyset$. If this is not the case, there is an $u\in d^0$ such that $u\in X_{n+1}$ hence $u$ is a regular vertex and $r(s^{-1}(u))\in X_n$. But for some $f\in s^{-1}(u)$
we have $r(f)\in d^0$ (and $r(f)\in X_n$ a contradiction).
So, there is only a cycle and any vertex connects with the cycle. 
To conclude that $E$ is a comet we need to 
prove that any infinite path ends in the cycle (see \cite[Definition 3.2]{AAS1}): indeed,
let $\l$ be an infinite path, since $\overline{\l^0}=E^0$ we have 
$\overline{\l^0}\cap c^0\ne\emptyset$ so that there is some 
$v\in c^0\in \overline{\l^0}$ which by \cite[Lemma 1.2]{CMMSS} gives that 
$v$ connects with $\l^0$ hence some vertex of $\l$ is in $c$ which implies
that $\l$ \lq\lq ends\rq\rq\ in the cycle. 
Then $E$ is a comet and 
applying \cite[Theorem 2.7.3]{AAS} we have an isomorphism of $L_\K(E)$ with a direct sum whose summands are of the type  $M_{\Lambda}(\K[x,x^{-1}])$ (take into account that, since the graph is row finite, in our case the ideal generated by vertices in cycles without exits is the whole algebra). Now, the primeness of $L_\K(E)$ (by \cite[Proposition II.1.4]{NO}, a graded $\Z$-algebra is graded prime if and only if it is prime)  implies that $L_\K(E)\cong M_\Lambda(\K[x,x^{-1}])$ for some possible infinite set $\L$. The fact that $\clz(L_\K(E))\cong \K[x,x^{-1}]$ is given in formula (\ref{garrulo}).


\end{proof}

In \cite{AAS1} the authors consider only row finite graphs and so comets were defined in this context. But the definition of a comet can be read for arbitrary graphs without change. In this more general setting we can not use the result above to compute the centroid of the algebra. For example, let $E$ be a graph with  two vertices, $u$ and $v$, where $u$ is an infinite emitter such that the range of each edge in $s^{-1}(u)$ is $v$, and $v$ is the base of cycle without exit. This is a (non row-finite) comet, but $L_\K(E)$ is not isomorphic to $ M_{\Lambda}(\K[x,x^{-1}])$. We will compute the centroid of this example in the next section.

\begin{remark}\label{repetido}

It is also possible to prove directly (without the use of direct limits) that $\clz(\LPA) \cong \K[x,x^{-1}]$ for $\LPA$ graded simple but non-simple. To see this, let $c$ be the unique cycle without exits  in $E$. Let $u\in c^0$ and define a map $\Omega: \clz(\LPA) \rightarrow C_u $ by $\Omega (\tau) = \tau (u)$. It is clear that $\Omega$ is a surjective homomorphism. To see that $\Omega$ is injective assume that $\tau(u)=0$. By Lemma~\ref{spesso}[i] $\tau$ vanishes on $T(u)$. 
We know that $\overline{\{u\}}=E^0=\cup_n\Lambda^n$ (for $X=\{u\}$ and following the terminology in section 2). Suppose by induction that $\tau$ vanishes at $\Lambda^n$. Let $v \in \Lambda^{n+1} \setminus \Lambda^n$. Then $v= \sum f_i f_i^*$, where $r(f_i) \in \Lambda^n$. Hence $\tau(v) = \sum f_i \tau (r(f_i)) f_i^* = 0$ and $\tau(\overline{\{u\}})=0$. Since $\overline{\{u\}} = E^0$ we obtain that $\tau = 0$. Therefore $ \clz(\LPA)$ is isomorphic to $C_u$. 
To see that $C_u$ is isomorphic to $\K[x,x^{-1}]$ define $\psi: C_u \rightarrow \K[x,x^{-1}]$ in the following way: given $\tau(u)\in C_u$, since $C_u \subseteq u \LPA u = \K[c, c^*]$, then $\tau(u)= p(c,c^*)$ for some polynomial $p\in \K[x,x^{-1}]$. Define $\psi(\tau(u)) = p(x, x^{-1})$. It is clear that $\psi$ is an injective homomorphism. To see that $\psi$ is surjective, let $p\in \K[x,x^{-1}]$. Define $\tau(u):= p(c,c^*)$. For any $v\in \Lambda^0=T(u) = c^0$, write $c=\sigma \sigma'$, where $s(\sigma')=v$. Let $\tau(v):=\sigma^*p(c,c^*) \sigma$.
By induction, suppose we have defined $\tau$ in $\Lambda^n$. Let $w\in \Lambda^{n+1}\setminus \Lambda^n$. Then $s^{-1}(w)=\{g_1,\ldots,g_k\}$, where $r(g_i)\in \Lambda^n$ for all $i$. Define $\tau(w):=\sum_{i=1}^k g_i \tau(r(g_i)) g_i^*$. Now, by Proposition~\ref{platano}, $\tau$ is uniquely extended to $\clz( \LPA)$ and we are done. 
\end{remark}

\section{The prime case}
As the title says in this section we will deal with prime Leavitt path algebras. Recall that a Leavitt path algebra $L_\K(E)$ is prime if and only if the graph satisfyes Condition~(MT3), which is also known as downward directedness (see \cite{Gene}, \cite{BLR}).
If $L_\K(E)$ is  prime, we will observe some cases in which the centroid is $\K$ and others in which it is $\K[x,x^{-1}]$. It is known that in general, the centroid of a prime algebra is a domain.

The scheme in Figure \ref{rfvfr} below explains the tree dichotomies that we have followed to consider all the possible cases. Observe for instance that if $E$ satifies MT3 and has cycles, the contrary predicate of \lq\lq $\exists$ cycle with exits\rq\rq\ is \lq\lq $\exists !$ cycle without exits\rq\rq, where the symbol $\exists !$ stands for \lq\lq exists a unique\rq\rq. Note that, under the  previous conditions, the predicate \lq\lq  $\exists !$ cycle without exits\rq\rq\ is equivalent to the assertion that there is a unique cycle and it has no exits.

\begin{center}
\begin{figure}[H]
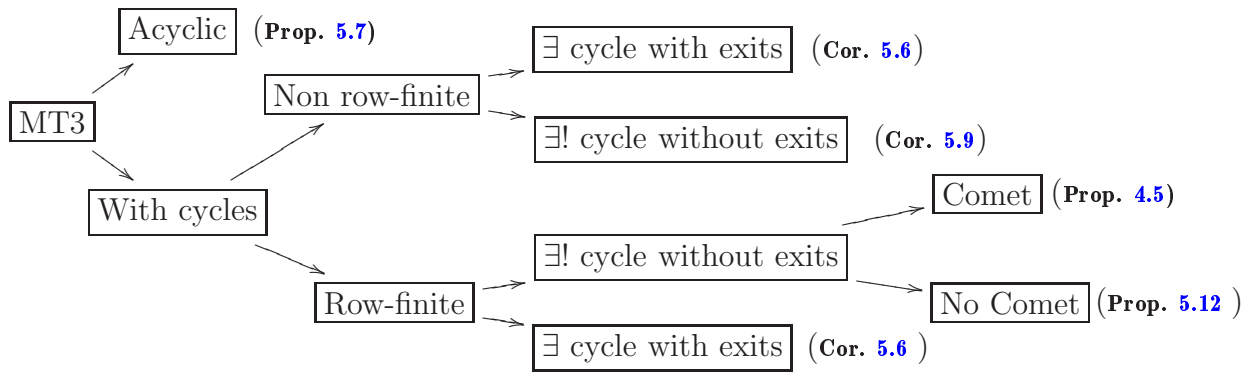

\xygraph{!{<0cm,0cm>;<1.5cm,0cm>:<0cm,1.2cm>::}
!{(0,0)}*+{{\fbox{MT3}}}="a"
!{(1.1,1)}*+{\fbox{Acyclic}}="b"
!{(2.3,1)}*+{(\hbox{\s Prop. \ref{sol})}}
!{(7.15,0.8)}*+{(\hbox{\s Cor. \ref{Billy}})}
!{(7.7,-0.2)}*+{(\hbox{\s Cor. \ref{infinite}})}
!{(9.3,-0.8)}*+{(\hbox{\s Prop. \ref{gorrona})}}
!{(9.7,-2)}*+{\ \ (\hbox{\s  Prop. \ref{kfield} })}
!{(7.15,-2.5)}*+{(\hbox{\s Cor. \ref{Billy} })}
!{(1.1,-1)}*+{\fbox{With cycles}}="c"
!{(2.8,0.3)}*+{\fbox{Non row-finite}}="d"
!{(5.35,0.8)}*+{\fbox{$\exists$ cycle with exits}}="i"
!{(5.6,-0.2)}*+{\fbox{$\exists !$ cycle without exits}}="j"
!{(3,-2)}*+{\fbox{Row-finite}}="e"
!{(5.6,-1.5)}*+{\fbox{$\exists !$ cycle without exits}}="f"
!{(8.2,-0.8)}*+{\fbox{Comet}}="k"
!{(8.4,-2)}*+{\fbox{No Comet}}="l"
!{(5.35,-2.5)}*+{\fbox{$\exists$ cycle with exits}}="g"
"a":"b"
"a":"c"
"c":"d"
"c":"e"
"e":"f"
"e":"g"
"d":"i"
"d":"j"
"f":"k"
"f":"l"
}
\medskip
\caption{Decision tree}\label{rfvfr}
\end{figure}
\end{center}

Before we proceed analysing each case we prove a few general auxiliary results.

\begin{lemma}\label{Charlie}
Let $A$ be a prime algebra and $u$ an idempotent in $A$. Then:
\begin{enumerate}
\item For any nonzero $\T\in\clz(A)$ we have that $\T$ is a monomorphism.
\item For any $0\ne x\in C_u$ and $0\ne y\in uAu$ we have $xy\ne 0$.
\item Let $A=L_\K(E)$ and $0\ne \T\in\clz(A)$. If $\gamma\in\path(E)$ and $\T(\gamma)=ku\ne 0$ with $u\in E^0$, then $\gamma\in Z(uAu)$.
\item If $\gamma\in Z(uAu)\cap\path(E)$ for a Leavitt path algebra $A=\LPA$ and $u$ is the base of a cycle with exits, then $\gamma$ is trivial. 
\end{enumerate}
\end{lemma}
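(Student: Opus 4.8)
The plan is to reduce each of the four items to the defining centralizer identities $\T(xy)=\T(x)y=x\T(y)$, combined with primeness of $A$, and to handle them in the stated order since (2) uses (1) and (3) uses (1).

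For (1) I would argue by contradiction. Recall from the preliminaries that $\ker\T$ is an ideal. If $\T\neq0$, choose $y$ with $\T(y)\neq0$ and take any $x\in\ker\T$. Applying $\T$ to $xay$ (for $a\in A$) in two ways gives $\T(xay)=\T(x)ay=0$ and $\T(xay)=x\T(ay)=xa\T(y)$, so $xA\T(y)=0$; primeness and $\T(y)\neq0$ force $x=0$, hence $\ker\T=0$. Item (2) is then the prime analogue of Lemma~\ref{peassomulo}(iii): write $x=\T(u)\neq0$, so $\T\neq0$ is injective by (1); if $xy=0$ with $y\in uAu$ then $0=\T(u)y=\T(uy)=\T(y)$, whence $y=0$, a contradiction.

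For (3), from $\T(\gamma)=ku\neq0$ we get $k\neq0$ and $\T$ injective. Sandwiching by $u$ gives $\T(u\gamma u)=u\T(\gamma)u=ku=\T(\gamma)$, so $u\gamma u=\gamma$, i.e. $\gamma\in uAu$. To see $\gamma$ is central, I would compute $\T(\gamma w\gamma)$ for $w\in uAu$ in two ways: as $\T(\gamma)\,w\gamma=ku\,w\gamma=kw\gamma$ and as $\gamma w\,\T(\gamma)=\gamma w\,ku=k\gamma w$ (using $uw=w=wu$); cancelling $k$ yields $w\gamma=\gamma w$. Thus $\gamma\in Z(uAu)$.

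For (4) the decisive observation is that centrality makes $\gamma$ a \emph{unit} of the corner: since $\gamma^\ast\in uAu$ and $\gamma^\ast\gamma=r(\gamma)=u$, centrality of $\gamma$ gives $\gamma\gamma^\ast=\gamma^\ast\gamma=u$. I would then prove the standard consequence that $\gamma\gamma^\ast=s(\gamma)$ forces each vertex $s(f_i)$ along $\gamma=f_1\cdots f_n$ to be regular and to emit $f_i$ as its only edge: peeling off $\gamma\gamma^\ast=f_1(\beta\beta^\ast)f_1^\ast\le f_1f_1^\ast$ shows $f_1f_1^\ast=u$ (which already excludes $u$ being a sink or infinite emitter), and recursion handles the remaining $f_i$. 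Finally, because $u=s(f_1)$ emits only $f_1$, the cycle $c$ based at $u$ is forced to read off $f_1,f_2,\dots$, so $c^0\subseteq\{s(f_1),\dots,s(f_n)\}$ (it must close up by step $n$, as cycle vertices are distinct); every such vertex emits a unique edge, so $c$ has no exit, contradicting the hypothesis. Hence $\gamma$ is trivial. I expect (4) to be the main obstacle: the nontrivial insight is that centrality \emph{by itself} yields $\gamma\gamma^\ast=u$, after which the bookkeeping identifying the cycle with the deterministic forward orbit determined by $\gamma$'s unique out-edges is the delicate point to get right.
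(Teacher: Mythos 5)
Your proofs of items (1)--(3) are correct and essentially the paper's own: for (1) you run the same primeness argument element-wise (showing $xA\tau(y)=0$ forces $x=0$) where the paper runs it ideal-wise ($\ker(\tau)\,\mathrm{im}(\tau)=0$, so one factor vanishes), and these are equivalent characterizations of primeness; (2) is identical; (3) is the paper's computation, except that you additionally verify $\gamma=u\gamma u$ via injectivity --- a point the paper leaves implicit --- and your evaluation of $\tau(\gamma w\gamma)$ two ways even avoids invoking injectivity in the commutation step. Item (4), however, is where you take a genuinely different route, and it works. The paper lets $c$ be the given cycle with an exit, uses the path identity $\gamma c=c\gamma$ to factor $\gamma=c\beta$, writes $c=\sigma\sigma'$ with an exit $f$ at $s(\sigma')$, and gets the contradiction $\gamma\sigma f=0$ by commuting $\gamma$ with the projection $\sigma ff^*\sigma^*$. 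You instead play centrality against $\gamma^*$ itself: since $\gamma^*\in uAu$, centrality yields $\gamma\gamma^*=\gamma^*\gamma=u$, and peeling this identity edge by edge ($f_1f_1^*=u$, then $\beta\beta^*=r(f_1)$, and so on) shows each vertex along $\gamma$ emits exactly one edge; consequently any cycle based at $u$ is forced to follow $\gamma$'s edges, must close within $n$ steps by distinctness of cycle vertices, and therefore has no exit, contradicting the hypothesis. Your approach buys two things: it sidesteps the paper's factorization claim ``$\gamma c=c\gamma$ and $c$ a cycle imply $\gamma=c\beta$,'' which the paper asserts tersely and which really needs the observation that a cycle cannot be a proper power of a shorter closed path; and it proves the stronger statement that a nontrivial path in $Z(uAu)$ makes every cycle at $u$ exit-free (indeed makes the out-degree along $\gamma$ identically one). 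What the paper's approach buys is brevity once the factorization is granted: the contradiction is localized in a single explicit element $\sigma ff^*\sigma^*$ rather than in an inductive analysis of the graph.
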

\begin{proof}
For the first assertion we know that $\ker(\T)$ and $\hbox{im}(\T)$ are ideals of $A$ and 
$\ker(\T)\hbox{im}(\T)=0$ (indeed, if $\T(x)=0$ then $x \T(y)=\T(x)y=0$).
So by primeness of $A$ we have $\ker(\T)=0$ (because $\T\ne 0$).
For the second assertion assume that $x$ and $y$ are nonzero elements in $C_u$ and $uAu$ respectively. So $x=\T(u)$ for some $\T\in\clz(A)$ and 
$xy=\T(u)y=\T(y)$. If $xy=0$ we deduce that $\T(y)=0$ and so $y=0$ a contradiction. To prove the third assertion
take an arbitrary $z\in uAu$, then $\T(\gamma)z=kuz=kz=z\T(\gamma)$. So $\T(\gamma z)=
\T(z\gamma)$ and since $\T$ is a monomorphism $\gamma z=z\gamma$ whence $\gamma\in Z(uAu)$. For the fourth item assume that $\gamma\ne u$, say $\gamma = g_1 \ldots g_m $, and let $c$ be the cycle with $s(c)=r(c)=u$. Write $c=\sigma \sigma'$ (with $\sigma'$ nontrivial) so that $v:=s(\sigma ')$ is an exit for $c$. Then, since $\gamma$ and $c:=c_1 \ldots c_n$ commute, we get that $$g_1 \ldots g_m c_1 \ldots c_n = c_1 \ldots c_n g_1 \ldots g_m.$$
Since 
$c$ is a cycle, we get that $\gamma = c \beta$ for some path $\beta$. Now, let $f$ be an edge such that $s(f)=v$ and $f$ is different from the first edge of $\sigma '$. Then,
since $\gamma \in Z(uAu)$, we obtain that $$\gamma (\sigma f f^* \sigma^*)=(\sigma f f^* \sigma^*) \gamma = (\sigma f f^* \sigma^*) c \beta = (\sigma f f^* \sigma^*) \sigma \sigma' \beta = \sigma f f^*  \sigma' \beta =0.$$ 
Hence, multiplying on the right by $\sigma f$, we have that $\gamma \sigma f = 0$, a contradiction.
\end{proof}

The idea on Remark~\ref{repetido} of identifying $\clz(A)\to C_u$, for some $u\in E^0$, is key in the sequel. So we make a precise statement for prime algebras below.

\begin{proposition}\label{Paul}
For a prime Leavitt path algebra $A=L_\K(E)$ and any $u\in E^0$, the map $\Omega\colon\clz(A)\to C_u$ such that $\Omega(\tau):=\tau(u)$ is an isomorphism.
\end{proposition}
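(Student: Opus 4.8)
The plan is to verify the three ingredients an isomorphism of $\K$-algebras requires: that $\Omega$ is a well-defined homomorphism, that it is surjective, and that it is injective. Surjectivity is immediate, since $C_u$ is \emph{by definition} the image of $\clz(A)$ under evaluation at $u$, so every element of $C_u$ is of the form $\Omega(\T)$. Thus the real content lies in the homomorphism property and, above all, in injectivity, where the primeness of $A$ will be the essential ingredient. This mirrors the identification $\clz(A)\cong C_u$ already exploited in Remark~\ref{repetido}, but here carried out under the weaker hypothesis of primeness.

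For the homomorphism property, $\K$-linearity of $\Omega$ is clear, as $\clz(A)$ is a $\K$-vector space and evaluation at $u$ is linear. For multiplicativity I recall that the product on $\clz(A)$ is composition and that $C_u$ is a subalgebra of $uAu$. Given $\T,\sigma\in\clz(A)$, the element $\sigma(u)$ lies in $C_u\subseteq uAu$, so $\sigma(u)=u\,\sigma(u)$; hence, using the centralizer identity $\T(xy)=\T(x)y$,
\[
\Omega(\T\sigma)=\T(\sigma(u))=\T(u\,\sigma(u))=\T(u)\,\sigma(u)=\Omega(\T)\,\Omega(\sigma).
\]
This shows $\Omega$ is a $\K$-algebra homomorphism, and the computation is routine once the placement of $u$ is observed.

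The key step, and the one I expect to be the only genuine obstacle, is injectivity; this is exactly where primeness enters (by contrast, the analogous statement in the graded simple case, Remark~\ref{repetido}, required an inductive propagation of $\T$ along the hereditary and saturated closure of $u$). Suppose $\Omega(\T)=\T(u)=0$. By Lemma~\ref{Charlie}(1), every nonzero centralizer of a prime algebra is a monomorphism; so if $\T$ were nonzero it would be injective, and $\T(u)=0$ would force the nonzero idempotent $u$ to vanish, a contradiction. Hence $\T=0$, and $\Omega$ is injective. Combining the three parts, $\Omega$ is an isomorphism of $\K$-algebras, as claimed.
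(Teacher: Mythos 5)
Your proof is correct and follows essentially the same route as the paper: multiplicativity via the centralizer identity (the paper computes $\Omega(\tau\sigma)=\tau(\sigma(u^2))=\tau(u\sigma(u))=\tau(u)\sigma(u)$, which is your computation with $u^2$ in place of $u$), surjectivity by the definition of $C_u$, and injectivity from Lemma~\ref{Charlie}(1). Your spelled-out injectivity argument (a nonzero centralizer is a monomorphism, so $\tau(u)=0$ with $u\neq 0$ forces $\tau=0$) is exactly the content behind the paper's terse ``by Lemma~\ref{Charlie} it is a monomorphism.''
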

\begin{proof}

Let $\tau,\sigma\in\clz(A)$. Then $$\Omega(\tau\sigma)=\tau(\sigma(u^2))=
\tau(u\sigma(u))=\tau(u)\sigma(u)=\Omega(\tau)\Omega(\sigma).$$ Also by construction $\Omega$ is surjective and by Lemma \ref{Charlie} it is a monomorphism. 
\end{proof}


Given the above proposition our next goal is to identify $\tau (u)$ when $u$ is the base of a cycle and $\tau$ is a centralizer. For this we need the two auxiliary lemmas below.

\begin{lemma}\label{vaiBrasil: Essa copa eh nossa!}
Let $c$ be a cycle of $\LPA$ based at $u$ and consider the map $S\colon uAu\to uAu$ given by $S(x)=c^*xc$. Assume that
$w\in uAu$ is such that $S^n(w)\ne 0$ for each $n\ge 1$ and that $S(w)\in \path(E)\cup\path(E)^*$. Then $S(w)=c^m$ for some $m\in\Z$.
\end{lemma}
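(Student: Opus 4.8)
The plan is to peel copies of the cycle off $y:=S(w)$ one at a time and show that nothing but copies of $c$ can survive the iteration. Since $S^{n}(w)=S^{n-1}(S(w))=S^{n-1}(y)$, the hypothesis ``$S^{n}(w)\neq 0$ for all $n\ge 1$'' is equivalent to ``$S^{m}(y)\neq 0$ for all $m\ge 0$''. Moreover $y\in uAu$, so if $y$ is a real path it is a \emph{closed} path based at $u$ (from $uyu=y$), while if $y=\gamma^\ast$ is a ghost path then $\gamma$ is a closed path at $u$; the degenerate possibility $y=u=c^{0}$ is already of the desired form. Thus the problem splits into a real-path case and a ghost-path case.

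First I would record two identities. As $c$ is a cycle its edges are pairwise distinct (their sources $s(c_i)$ are distinct) and $c^\ast c=u$, whence $(c^\ast)^{k}c=(c^{k-1})^\ast$ for $k\ge 1$. Secondly, the standard involution $\ast$ of $L_\K(E)$ satisfies $S(x)^\ast=(c^\ast x c)^\ast=c^\ast x^\ast c=S(x^\ast)$, so $S$ commutes with $\ast$; hence $S^{m}(\gamma)=\big(S^{m}(\gamma^\ast)\big)^\ast$ is nonzero exactly when $S^{m}(\gamma^\ast)$ is. Since $\ast$ is bijective, this reduces the ghost-path case $y=\gamma^\ast$ to the real-path case applied to the closed path $\gamma$, after which $y=(c^{j})^\ast=c^{-j}$.

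The core is the claim: \emph{if $\rho$ is a real closed path at $u$ with $S^{m}(\rho)\neq 0$ for all $m\ge 0$, then $\rho=c^{j}$ for some $j\ge 0$}, which I prove by strong induction on $|\rho|$. The base $|\rho|=0$ gives $\rho=u=c^{0}$. For $|\rho|\ge 1$ I first argue that $S(\rho)\neq 0$ forces $\rho=c\nu$ for some closed path $\nu$ at $u$: the product $c^\ast\rho$ is a single monomial, and $S(\rho)=(c^\ast\rho)c$ vanishes unless $c$ is a prefix of $\rho$ --- indeed, if $\rho$ is a proper prefix of $c$ then $c^\ast\rho=d^\ast$ for a nontrivial suffix $d$ of $c$ and $d^\ast c=0$ because the first edge of $d$ differs from $c_1$ (edge-distinctness), while in all other cases $c^\ast\rho=0$ outright. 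Here $\nu$ has length $|\rho|-|c|<|\rho|$. The key computation $S^{k}(\rho)=(c^\ast)^{k}c\nu c^{k}=(c^{k-1})^\ast\nu c^{k}=S^{k-1}(\nu)\,c$ then shows $S^{k-1}(\nu)\neq 0$ for all $k\ge 1$, so $\nu$ satisfies the same hypothesis and, by induction, $\nu=c^{j'}$; therefore $\rho=c^{\,j'+1}$.

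Applying this claim to $y$ in the real-path case and to $\gamma$ in the ghost-path case yields $S(w)=y=c^{m}$ for some $m\in\Z$, with the conventions $c^{0}=u$ and $c^{-j}=(c^\ast)^{j}$. I expect the only delicate point to be the prefix/suffix bookkeeping behind the implication ``$S(\rho)\neq 0\Rightarrow\rho=c\nu$'', which is precisely where the edge-distinctness of the cycle is used; the remaining ingredients --- the involution identity and the recursion $S^{k}(\rho)=S^{k-1}(\nu)\,c$ --- are short monomial computations that make the induction run.
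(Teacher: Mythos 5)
Your proof is correct and takes essentially the same approach as the paper's: both reduce the ghost-path case to the real-path case through the involution (since $S$ commutes with $\ast$), and both exploit the prefix analysis of $c^\ast\rho c\neq 0$ together with the edge-distinctness / no-revisiting property of the cycle to force $S(w)$ to be a power of $c$. The only difference is organizational: you peel off one copy of $c$ at a time by induction on length, using the recursion $S^{k}(c\nu)=S^{k-1}(\nu)\,c$, whereas the paper extracts the maximal power $c^{n}$ in one step and derives a contradiction from $S^{n+2}(w)=0$ when a nontrivial remainder survives --- the underlying monomial computations are identical.
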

\begin{proof}
Assume first that $\lambda:=S(w)\in\path(E)$. Then $c^*\lambda c\ne 0$ implies the following dichotomy:
\begin{enumerate}
\item There is a maximum natural $n\ge 1$ and a path $\mu$ with $\lambda=c^n \mu$.
\item There is a maximum natural $n\ge 1$ and a path $\mu$ with $c=\lambda^n\mu$.
\end{enumerate}
In the first possibility, if $\mu=u$ we have $\lambda=c^n$ and we are done. Thus we may assume $\mu\ne u$. Then  $c^*\mu=0$ since otherwise
$c=\mu\tau$ for some $\tau\in\path(E)$. But since $\mu,\tau\in uAu$ and $c$ is a cycle we have
$$\begin{cases} \mu=c, \tau=u \cr \text{ or }\cr
\mu=u, \tau=c\end{cases},$$
however both possibilities above have been already excluded.
Thus $c^*\mu=0$ and then $$S^{n+2}(w)={c^*}^{n+1}\lambda c^{n+1}={c^*}^{n+1} c^n\mu c^{n+1}=c^*\mu c^{n+1}=0,$$
a contradiction.

The second possibility of the dichotomy is that $c=\lambda^n\mu$ for a maximum $n$ and certain paths $\lambda,\mu\in uAu$. Since $c$ is a cycle and $n\ge 1$, we have $n=1$ and either $c=\lambda$ (in which case we are done) or $\lambda=u$ and we are also done.

Finally if $S(w)=\l^*$ with $\l\in\path(E)$,
we have $S(w^*)=S(w)^*=\l$ and, applying the previous discussion, we get again $S(w)=c^m$ for some integer $m$ (the powers of negative exponent as usual are powers of $c^*$ with positive exponent).
\end{proof}

\begin{lemma}\label{messinoestabien}
Let $c$ be a cycle of $\LPA$ based at $u$, $w=\alpha \beta^*$ be a walk in $u\LPA u$ and $S$ as in Lemma~\ref{vaiBrasil: Essa copa eh nossa!}. Suppose that $S^n(w)\neq 0$ for all $n\geq 1$. Then there exists an $m\in \N$ such that $S^m(w)$ belongs to $\text{Path}(E)\cup \text{Path}(E)^*$.
\end{lemma}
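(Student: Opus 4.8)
The plan is to follow the $S$-orbit $w_m:=S^m(w)$ and to show that, as long as it stays a \emph{genuine} walk (a monomial $\alpha\beta^*$ with both $\alpha$ and $\beta$ nontrivial), a single application of $S$ either already lands in $\path(E)\cup\path(E)^*$ or returns a genuine walk of strictly smaller length. Since the length is a nonnegative integer it cannot strictly decrease forever, so the orbit must leave the set of genuine walks, which is exactly the assertion. Concretely I would argue by induction on $N:=\abs{\alpha}+\abs{\beta}$, writing $c=c_1\cdots c_n$ and using the one structural feature of a cycle that matters here, namely that its vertices $s(c_1),\dots,s(c_n)$ are pairwise distinct. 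The base case is immediate: if $\alpha$ or $\beta$ is trivial then $w$ is itself a path or a ghost path, and $m=0$ works.

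For the inductive step assume $\alpha,\beta$ are both nontrivial. The hypothesis provides $S(w)=S^1(w)\neq0$, so $c^*\alpha\neq0$ and $\beta^*c\neq0$ (equivalently $c^*\beta\neq0$). Since $\alpha$ and $c$ share the source $u$, the relation $c^*\alpha\neq0$ says that $\alpha$ and $c$ are \emph{comparable}: either $\alpha=c\alpha'$ for some path $\alpha'$, or $\alpha$ is a proper initial subpath of $c$; the same holds for $\beta$. Using the identity $S(w)^*=S(w^*)$ with $w^*=\beta\alpha^*$ and the fact that $\path(E)\cup\path(E)^*$ is stable under $*$, I may assume $\abs{\alpha}\ge\abs{\beta}$. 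Then I compute $S(w)=(c^*\alpha)(\beta^*c)$ with (CK1) and $\lambda^*\lambda=r(\lambda)$ in the remaining cases. If $\abs{\beta}\ge n$ then both $\alpha,\beta$ begin with $c$, say $\alpha=c\alpha'$ and $\beta=c\beta'$, and $S(w)=\alpha'(\beta')^*$; should $\alpha'$ or $\beta'$ be trivial this is already a (ghost) path, and otherwise it is a genuine walk of length $N-2n<N$ whose $S$-orbit is a tail of that of $w$, so the induction hypothesis applies. If $\abs{\beta}<n\le\abs{\alpha}$ then $\alpha=c\alpha'$ while $c=\beta\delta$ with $\delta$ nontrivial, giving $S(w)=\alpha'\delta$, a genuine path, so $m=1$.

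The remaining case, $\abs{\alpha},\abs{\beta}<n$, is where the distinctness of the cycle's vertices is decisive and is the only genuine obstacle: here both $\alpha$ and $\beta$ are proper initial subpaths of $c$, and the walk condition $r(\alpha)=r(\beta)$ together with the distinctness of $s(c_1),\dots,s(c_n)$ forces $\abs{\alpha}=\abs{\beta}$, hence $\alpha=\beta$; writing $c=\alpha\delta$ then yields $S(w)=\delta^*\delta=r(\delta)=u\in\path(E)$. The point of this step is to exclude the a priori alarming possibility that $c^*\alpha$ and $\beta^*c$ are two \emph{mismatched} fragments of the cycle whose product is an irreducible mixed term lying outside $\path(E)\cup\path(E)^*$; the cycle's distinct-vertex property rules this out and collapses the product to the vertex $u$. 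With all cases settled the induction closes.
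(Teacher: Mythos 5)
Your proof is correct, and it reaches the conclusion by a genuinely different route than the paper. The paper avoids induction altogether: it extracts the maximal powers of $c$ in one stroke, writing $\alpha=c^k\alpha'$ and $\beta=c^q\beta'$ with $\alpha',\beta'$ not beginning with $c$, and then settles the four cases $q>k$, $q<k$, $q=k=0$, $q=k>0$ by computing the relevant iterate ($S^{k+1}(w)$, $S^{q+1}(w)$, $S(w)$, $S^k(w)$) explicitly, the only reduction being from $q=k>0$ to $q=k=0$. Your induction on $\abs{\alpha}+\abs{\beta}$, peeling off one copy of $c$ per application of $S$ and using the $*$-symmetry to assume $\abs{\alpha}\ge\abs{\beta}$, trades that exponent bookkeeping for a recursion; both arguments ultimately rest on the same comparability fact extracted from (CK1), namely that $c^*\alpha\ne 0$ forces $\alpha$ and $c$ to be initial segments of one another. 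The one substantive divergence is the case where neither $\alpha$ nor $\beta$ begins with $c$: you invoke the defining property of a cycle (the sources $s(c_1),\dots,s(c_n)$ are pairwise distinct) together with the walk condition $r(\alpha)=r(\beta)$ to force $\alpha=\beta$, hence $S(w)=u$; the paper instead writes $c=\alpha\mu=\beta\lambda$ and simply observes that $S(w)=\mu^*\alpha^*\alpha\beta^*\beta\lambda=\mu^*\lambda$ is nonzero, hence automatically a real or a ghost path, with no appeal to distinctness of vertices. So your argument yields a sharper conclusion at that point ($S(w)$ is the vertex $u$ itself), but the paper's version is more economical and would apply verbatim to any closed path; in particular the distinct-vertex property is not, as you suggest, indispensable there.
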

\begin{proof}
Let $w=\a \b^* \in u\LPA u$. If $w=u$ the result follows directly. Write $\a = c^k \a' $ and $\b = c^q \b'$ where $k$ and $q$ are non negative integers and $\a',\b'$ are paths such that $\a' \neq c \a''$ and $\b' \neq c \b''$, for all $\a'', \b'' \in \text{Path}(E)$. We have the following possibilities:
\begin{itemize}
    \item If $q>k$, then $S^k(w)=\a'\b'^* (c^{q-k})^*$ and $S^{k+1}(w)=c^*\a'\b'^*(c^{q-k-1})^*\ne 0$. Thus $c=\a'\mu$ for some path $\mu$ and consequently $S^{k+1}(w)=\mu^*(\b')^*(c^*)^{q-k-1}$ is a ghost path.
    \item If $q<k$, reasoning on $S^q(w)$ and 
    $S^{q+1}(w)$, we get that the latter is a real path.
    \item If $q=k=0$,  we have that $\a$ and $\b$ are not multiples of $c$. Since $0\ne S(w)$ we have $c=\a\mu=\b\lambda$ for some paths $\mu,\lambda$. But then $S(w)=\mu^*\a^*\a\b^*\b\lambda=\mu^*\lambda$ and this is a real or a ghost path (being nonzero).
    \item If $q=k>0$, then $w=c^k\a'\b'^*(c^*)^k$ and $S^k(w)=\a'\b'^*$ which proves that 
    $S^n(\a'\b'^*)\ne 0$ for any $n$. By  the previous item, applied to $\a'\b'^*$, we know that for some integer $m$, we have 
    $S^m(\a'\b'^*)$ is either a path or a ghost path. But $S^{k+m}(w)=S^m(\a'\b'^*)$.
\end{itemize}
\end{proof}

\begin{proposition}\label{Desmond}
Let $u\in E^0$ be the base of a cycle $c$ and
$\tau\in\clz(\LPA)$. Then $\tau(u)$ is a Laurent polynomial in $c$.
\end{proposition}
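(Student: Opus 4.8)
The plan is to reduce everything to the behaviour of the single map $S\colon u\LPA u\to u\LPA u$, $S(x)=c^\ast xc$, on which Lemmas~\ref{vaiBrasil: Essa copa eh nossa!} and~\ref{messinoestabien} already give complete control. Set $z:=\tau(u)$. Since $z\in C_u$, Lemma~\ref{peassomulo}(i) gives $z\in Z(u\LPA u)$ (although, as will be apparent, only the centralizer identities for $\tau$ are really used). The crucial first observation is that $z$ is a fixed point of $S$: using $c^\ast c=u$ together with $\tau(ab)=\tau(a)b=a\tau(b)$,
$$S(z)=c^\ast\tau(u)c=c^\ast\tau(uc)=c^\ast\tau(c)=\tau(c^\ast c)=\tau(u)=z.$$
Hence $S^n(z)=z$ for every $n\ge 1$.

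Next I would analyse the $S$-orbit of a single walk. Fix a basis $B$ as in Remark~\ref{basetipica} and write $z=\sum_i k_i w_i$ in normal form, where each $w_i=\alpha_i\beta_i^\ast$ is a walk with $s(\alpha_i)=s(\beta_i)=u$, i.e.\ a walk in $u\LPA u$. For each index $i$ there are two mutually exclusive possibilities. Either $S^N(w_i)=0$ for some $N$, whence $S^n(w_i)=0$ for all $n\ge N$; or $S^n(w_i)\ne 0$ for all $n\ge 1$. In the latter case I would apply Lemma~\ref{messinoestabien} to the walk $S(w_i)$ (note that $S(w_i)=(c^\ast\alpha_i)(\beta_i^\ast c)$ is again a single walk or $0$, since $c^\ast\alpha_i$ and $\beta_i^\ast c$ each collapse to a single path, ghost path, or $0$), obtaining an $m\ge 1$ with $S^m(w_i)\in\path(E)\cup\path(E)^\ast$; then Lemma~\ref{vaiBrasil: Essa copa eh nossa!} applied to $S^{m-1}(w_i)$ yields $S^m(w_i)=c^{\,j_i}$ for some $j_i\in\Z$. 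Finally, a direct computation gives $S(c^{\,j})=c^{\,j}$ for every $j\in\Z$, so once the orbit reaches a power of $c$ it is frozen there: $S^n(w_i)=c^{\,j_i}$ for all large $n$. I expect this index bookkeeping — passing from ``$S^m(w_i)$ is a (ghost) path'' to ``$S^n(w_i)$ is a fixed power of $c$'' — to be the only delicate point, and it is exactly what the two preceding lemmas, together with the freezing property $S(c^{\,j})=c^{\,j}$, are designed to handle.

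To conclude I would assemble the pieces by linearity. Choosing $n$ larger than all the (finitely many) vanishing thresholds and stabilisation times of the $w_i$, we obtain
$$z=S^n(z)=\sum_i k_i\,S^n(w_i)=\sum_{i\in I_\infty} k_i\,c^{\,j_i},$$
where $I_\infty$ denotes the set of indices whose orbit survives. The right-hand side is manifestly an element of $\K[c,c^\ast]$, so $\tau(u)=z$ is a Laurent polynomial in $c$, as claimed. Observe that no cancellation subtlety arises: each surviving walk individually collapses to a power of $c$, hence no term lying outside $\K[c,c^\ast]$ can persist, and the cases $z=0$ or $I_\infty=\emptyset$ trivially give $z\in\K[c,c^\ast]$.
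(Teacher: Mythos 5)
Your proof is correct and takes essentially the same route as the paper: both show that $\tau(u)$ is fixed by $S$, expand it as a linear combination of walks in $u\LPA u$, and use Lemma~\ref{messinoestabien} followed by Lemma~\ref{vaiBrasil: Essa copa eh nossa!} to force each surviving $S$-orbit onto a power of $c$. The only differences are cosmetic: you derive $S(\tau(u))=\tau(u)$ directly from the centralizer identities and let vanishing orbits die off (also making the freezing property $S(c^j)=c^j$ explicit), whereas the paper obtains the fixed-point property from $\tau(u)\in Z(u\LPA u)$ and uses a minimal-length walk expression to rule out vanishing terms.
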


\begin{proof}
Observe first that $\tau(u)\in Z(uAu)$ and hence
$\tau(u)\in\hbox{Fix}(S):=\{x\in\LPA\colon S(x)=x\}$. Of course $\tau(u)\in\hbox{Fix}(S^m)$ for any $m\ge 1$.
We write $\tau(u)=k_1w_1+\cdots+k_nw_n$ where $k_i\in\K^\times$, the $w_i$'s are walks and $n$ is minimum. Then for any $m$ we have $\tau(u)=S^m(\tau(u))=\sum_{i=1}^n k_i S^m(w_i)$ and hence
$S^m(w_i)\ne 0$ for each $i=1,\ldots,n$ (and for arbitrary $m$). Applying Lemma~\ref{messinoestabien} we get for each $i$ the existence of an exponent $q_i$ such that 
$S^{q_i}(w_i)\in\path(E)\cup\path(E)^*$. Thus,
taking $t\ge\max(q_i)$ we have that $\tau(u)=S^t(\tau(u))=\sum_i k_i S^t(w_i) $ is
a linear combination of path or ghost paths.
Applying now Lemma~\ref{vaiBrasil: Essa copa eh nossa!} we get that each $\tau(u)$ is a linear combination of  powers (possibly negative) of $c$.
\end{proof}

The above proposition allow us to identify the centroid of prime Leavitt path algebras associated to graphs that posses a cycle with exit. 

\begin{corollary}\label{Billy}
Under the hypothesis of the previous proposition if the cycle has an exit, then 
$\tau(u)\in\K u$. In particular, if $A=L_\K(E)$ is prime and there is a cycle with exits in $E$, we have 
$\clz(A)\cong\K$.
\end{corollary}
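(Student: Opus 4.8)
The plan is to combine Proposition~\ref{Desmond} with Lemma~\ref{Charlie}(4) to pin down $\tau(u)$, and then invoke Proposition~\ref{Paul} to transfer this to the whole centroid. By Proposition~\ref{Desmond}, since $u$ is the base of the cycle $c$, any $\tau\in\clz(A)$ satisfies $\tau(u)=\sum_{j} k_j c^j$ (a Laurent polynomial in $c$, where negative powers denote powers of $c^*$). The goal is to show that the only surviving term is the one with $j=0$, i.e.\ $\tau(u)=k_0 u\in\K u$.

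**First I would** argue that no nonzero power $c^j$ with $j\ne 0$ can appear. The key obstruction is provided by Lemma~\ref{Charlie}(4): a nontrivial path lying in $Z(uAu)$ cannot exist when $u$ is the base of a cycle \emph{with an exit}. Since $\tau(u)\in Z(uAu)$ (as noted at the start of the proof of Proposition~\ref{Desmond}, using Lemma~\ref{peassomulo}(i)), I want to leverage this. The cleanest route is to suppose $\tau(u)=\sum_j k_j c^j$ has a nonzero term of highest (or lowest) degree $j\ne 0$, and to multiply by an appropriate power of $c$ or $c^*$ to isolate a single nontrivial path (or ghost path) in the center of the corner, contradicting Lemma~\ref{Charlie}(4). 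Concretely, if $N$ is the largest index with $k_N\ne 0$ and $N>0$, then $\tau(u)(c^*)^{N-1}$ should, after using the orthogonality relations $c^*c^j=c^{j-1}$ for $j\ge 1$ and the vanishing of lower/negative terms multiplied suitably, reduce to $k_N c$ plus strictly lower-degree contributions; refining this to extract exactly $k_N c\in Z(uAu)$ then contradicts Lemma~\ref{Charlie}(4) since $c$ is a nontrivial closed path based at $u$. The symmetric argument with $c$-multiplication handles the most negative index. Hence all $k_j=0$ for $j\ne 0$, giving $\tau(u)=k_0 u\in\K u$.

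**The second assertion** then follows immediately: by Proposition~\ref{Paul}, the map $\Omega\colon\clz(A)\to C_u$, $\Omega(\tau)=\tau(u)$, is an isomorphism of $\K$-algebras. Since we have just shown $C_u=\{\tau(u):\tau\in\clz(A)\}\subseteq \K u$, and clearly $\K u\subseteq C_u$ (the identity-scaling centralizers $k\cdot 1_{L_\K(E)}$ realize every $ku$), we conclude $C_u=\K u\cong\K$. Therefore $\clz(A)\cong C_u\cong\K$.

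**The main obstacle** I anticipate is making the isolation argument in the second paragraph fully rigorous, rather than hand-waving "multiply by a power of $c^*$." The difficulty is that a general Laurent polynomial $\sum_j k_j c^j$ involves both positive powers of $c$ and positive powers of $c^*$, and the Cuntz--Krieger relations interact in a slightly delicate way when multiplying a mixed expression: one must carefully track which products survive so as to extract a \emph{single} nontrivial path term sitting in $Z(uAu)$. I would likely argue it by first separating the analytic-plus and analytic-minus parts and treating the highest positive power and lowest negative power independently; the cleanest formulation may be to note that the centrality already forces strong constraints, then apply Lemma~\ref{Charlie}(4) directly to the nonzero monomial of extremal degree after it has been exhibited as an element of $Z(uAu)\cap(\path(E)\cup\path(E)^*)$. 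Everything else—the transfer via $\Omega$ and the reverse inclusion $\K u\subseteq C_u$—is routine.
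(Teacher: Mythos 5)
Your outer framework coincides with the paper's: Proposition~\ref{Desmond} gives $\tau(u)=\sum_j k_jc^j$, and Proposition~\ref{Paul} transfers $C_u=\K u$ to $\clz(A)\cong\K$; those parts of your argument are correct. The gap is exactly the step you flagged, and it is a real one, not a matter of bookkeeping. First, right multiplication by $(c^*)^{N-1}$ does not perform the reduction you describe: the relation $c^*c=u$ only collapses products with $c^*$ on the \emph{left} of $c$, whereas on the other side $cc^*\ne u$ --- and this failure occurs precisely \emph{because} $c$ has an exit (if $f$ is an exit at a vertex $v$ of $c$, then $f^*$ annihilates the idempotent obtained by conjugating $cc^*$ to $v$, while $f^*v=f^*\ne 0$, so that idempotent is proper). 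Hence $c^N(c^*)^{N-1}$ is $c$ times a proper idempotent, not $c$. Second, and more fundamentally, even after multiplying on the correct side, $(c^*)^{N-1}\tau(u)$ need not lie in $Z(uAu)$: the element $(c^*)^{N-1}$ is not central in the corner, and $Z(uAu)$ is a subalgebra, not an ideal. So Lemma~\ref{Charlie}(4) cannot be applied to any monomial you ``extract'' this way, and no contradiction is reached as written.

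There is a clean repair, but it goes through the grading rather than through multiplication: $uAu$ is a graded subalgebra of the $\Z$-graded algebra $A$, and the center of a $\Z$-graded algebra is graded (compare homogeneous components on both sides of $\tau(u)a=a\tau(u)$ for homogeneous $a\in uAu$). Since the monomials $k_jc^j$ are the homogeneous components of $\tau(u)$, of pairwise distinct degrees $j|c|$, each $c^j$ with $k_j\ne 0$ lies in $Z(uAu)$; for $j>0$ this contradicts Lemma~\ref{Charlie}(4) directly, and for $j<0$ one applies the involution (which preserves $Z(uAu)$) to reduce to the case $j>0$. With that substitution your strategy closes. For comparison, the paper never invokes Lemma~\ref{Charlie}(4) here: it writes $c=\sigma\lambda$ with an exit $f$ satisfying $f^*\lambda=0$, splits $\tau(u)=k_0u+p+q$ into its positive and negative parts, checks $\sigma ff^*\sigma^*p=0=q\,\sigma ff^*\sigma^*$, and then equates terms of the same degree in the commutation of $\tau(u)$ with $\sigma ff^*\sigma^*\in uAu$ to force $k_j=0$ for all $j\ne 0$ --- a direct computation in the same degree-comparison spirit as the graded argument above.
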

\begin{proof}
By Proposition \ref{Desmond}, $\tau(u)=\sum_i k_i c^i$ a polynomial in $c$.
If $c$ has an exit we may write $c=\sigma\lambda$ with 
$\sigma,\lambda \in\path(E)$ and $\lambda$ nontrivial, in such a way that there is an edge $f$ which is an exit for $c$ and $s(f)=s(\lambda)$, but $f$ does not coincide with the first edge of $\lambda$, that is, $f^*\lambda=0$. Then by (i) of Lemma \ref{peassomulo}, imposing commutativity of 
$\sum_{i}k_i c^i$ with $\sigma f f^*\sigma^*$, we get $k_i=0$ for $i\ne 0$, that is, $\tau(u)\in\K u$.
Indeed: we can write $\tau(u)=\sum_{i}k_i c^i=k_0 u+p+q$ where $p$ is a polynomial in $c$ and $q$ a polynomial in $c^*$ both of positive degree. Observe that $q \sigma f f^*\sigma^*=0=\sigma f f^*\sigma^* p$ and consequently the commutativity of $\tau(u)$ and $\sigma f f^*\sigma^*$, equating terms of the same degree,  gives $k_i=0$ for $i\ne 0$.  For the second part of the Corollary consider a cycle with exits $c$ and apply Proposition~\ref{Paul}.
Then $\clz(A)\cong C_u=\K u\cong\K$.
\end{proof}

After Corollary \ref{Billy}, we must focus our attention of prime Leavitt path algebras associated to graphs $E$ in which every cycle (if any) is a no-exit cycle. If $L_\K(E)$ is prime and there is a cycle with no exits, there is only one such a cycle.  In the case of absence of cycles, or the existence of an infinite emitter, the centroid is isomorphic to $\K$, as we show below. 

\begin{proposition}\label{sol}
Consider a prime Leavitt path algebra $A=L_\K(E)$  with $E$ acyclic. Then $\clz(A)\cong\K$.
\end{proposition}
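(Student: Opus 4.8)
The plan is to exploit the isomorphism $\Omega\colon\clz(A)\to C_u$ from Proposition~\ref{Paul}, which reduces the computation of the centroid to understanding $C_u$ for a single conveniently chosen vertex $u$. Since $E$ is acyclic and $A=L_\K(E)$ is prime, the graph satisfies Condition (MT3), i.e. downward directedness. The key observation is that acyclicity lets me apply Proposition~\ref{criacaballar}, which states that for an acyclic graph satisfying (MT3), any element $z\in Z(uL_\K(E)u)$ must be a scalar multiple of $u$.

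First I would fix an arbitrary vertex $u\in E^0$ and consider the associated subalgebra $C_u=\{\tau(u)\colon\tau\in\clz(A)\}$. By part (i) of Lemma~\ref{peassomulo}, $C_u$ is contained in the center $Z(uL_\K(E)u)$ of the corner. Then I invoke Proposition~\ref{criacaballar}: because $E$ is acyclic and (MT3) holds (primeness being equivalent to (MT3)), every element of $Z(uL_\K(E)u)$ is of the form $k u$ for some $k\in\K$. Hence $\tau(u)\in\K u$ for every centralizer $\tau$, which shows $C_u\subseteq\K u$. The reverse inclusion is immediate since the identity map lies in $\clz(A)$, so $C_u=\K u\cong\K$.

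Finally, I would apply Proposition~\ref{Paul}, which gives $\clz(A)\cong C_u$ for the prime Leavitt path algebra $A$. Combining this with $C_u=\K u\cong\K$ yields $\clz(A)\cong\K$, completing the proof.

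I do not anticipate a serious obstacle here, since the genuinely hard structural work has already been front-loaded into Proposition~\ref{criacaballar} (the inductive argument on $\partial_B$ controlling centers of corners in the acyclic (MT3) setting) and Proposition~\ref{Paul} (the reduction of the full centroid to a single corner via $\Omega$). The only point requiring care is to make the logical dependencies explicit: one must verify that the hypotheses of Proposition~\ref{criacaballar} are met, namely that primeness of $A$ is exactly Condition (MT3), so that the acyclic prime case falls squarely under its scope. Everything else is a short chain of citations.
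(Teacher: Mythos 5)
Your proposal is correct and follows essentially the same route as the paper's own proof: Lemma~\ref{peassomulo}(i) to place $\tau(u)$ in $Z(uL_\K(E)u)$, Proposition~\ref{criacaballar} (using that primeness is Condition (MT3)) to force $\tau(u)\in\K u$, and Proposition~\ref{Paul} to identify $\clz(A)$ with $C_u\cong\K$. No gaps; the only addition over the paper's three-line proof is your explicit remark that $\K u\subseteq C_u$ via the identity centralizer, which the paper leaves implicit.
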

\begin{proof}
By Lemma \ref{peassomulo}, if $u \in E^0$, then $\tau(u)\in \Z(uAu)$. By Proposition \ref{criacaballar} we have $\tau(u)=ku$ for some $k \in \K$ and by Proposition \ref{Paul} we conclude $C_u\cong \clz(A)$, hence $\clz(A)\cong\K$.
\end{proof}

\begin{proposition}\label{infinite1}

Let $E$ be a graph satisfying MT3 and suppose that there is 
  an infinite emitter $v\in E^0$ which is not the base of a cycle. If $z\in Z(vL_{\K}(E)v)$ then  $z\in Kv$.
 \end{proposition}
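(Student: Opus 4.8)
The plan is to adapt the strategy already used in Proposition~\ref{criacaballar} (the acyclic, MT3 case), since the hypothesis here is nearly identical: $v$ is an infinite emitter that is not the base of a cycle, and $E$ satisfies MT3. The key point is that because $v$ is not the base of a cycle, Lemma~\ref{rap} applies to give a clean normal expression $z=kv+\sum_f f\xi_f f^*$ with the sum over $f\in s^{-1}(v)$ and each $\xi_f\in Z(r(f)L_\K(E)r(f))$, with no extra summands arising from returning paths. I would proceed by induction on $\partial_B(z)$ for a fixed basis $B$ as in Remark~\ref{basetipica}.

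The base case $\partial_B(z)=0$ forces $z=kv$ and is immediate. For the crucial case $\partial_B(z)=1$, I would write $z=kv+\sum_{i} k_i f_i f_i^*$ with some $k_i\neq 0$, and exploit the infinite emitter hypothesis exactly as in Proposition~\ref{criacaballar}: since $v$ emits infinitely many edges but $z$ involves only finitely many of them, one can pick an edge $f_j\in s^{-1}(v)$ not appearing in the expression of $z$. By MT3 the vertices $r(f_i)$ and $r(f_j)$ have a common descendant $w$, so there are paths $\alpha$ from $r(f_i)$ to $w$ and $\beta$ from $r(f_j)$ to $w$. Then $f_i\alpha\beta^* f_j^*\in vL_\K(E)v$, and imposing $z\,f_i\alpha\beta^* f_j^* = f_i\alpha\beta^* f_j^*\,z$ isolates the coefficient $k_i$: the left side produces a $k_i f_i\alpha\beta^* f_j^*$ term while the right side cannot (since $f_j$ does not occur in $z$), forcing $k_i=0$, a contradiction.

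For the inductive step with $\partial_B(z)=n>1$, I would again write the normal expression $z=kv+\sum_f f\xi_f f^*$ from Lemma~\ref{rap}, where each $\xi_f\in Z(r(f)L_\K(E)r(f))$ satisfies $\partial_B(\xi_f)<n$. The subtlety is that the ambient hypothesis on $v$ (infinite emitter, not a base of a cycle) need not transfer to the vertices $r(f)$; the range vertices may be regular, may be cycle-bases, or anything else. However, I expect to invoke the already-established results: since $E$ is MT3 and $\xi_f\in Z(r(f)L_\K(E)r(f))$, I would like to apply Proposition~\ref{criacaballar} (if $E$ is acyclic) or, more carefully, argue that each $\xi_f$ must reduce to a scalar multiple of $r(f)$. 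Concluding $\xi_f=k_f r(f)$ then yields $z=kv+\sum_f k_f f f^*$, so $\partial_B(z)\le 1$, and the degree-$1$ analysis above finishes the proof.

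The \textbf{main obstacle} is precisely this last reduction: unlike Proposition~\ref{criacaballar}, here $E$ is not assumed acyclic, so a range vertex $r(f)$ could be the base of a (necessarily no-exit, by the surrounding discussion) cycle, in which case $Z(r(f)L_\K(E)r(f))$ is not merely $\K\, r(f)$ but a Laurent polynomial ring in the cycle, by Proposition~\ref{Desmond}. I would need to rule out a nontrivial Laurent contribution, and the natural tool is again the infinite-emitter-plus-MT3 commutativity trick: any cycle reachable from $v$ via $f$ is also ``tested'' against the infinitely many other edges out of $v$, and the commutation relations force the cycle-component to vanish. Making this compatibility argument precise — showing that the degree bookkeeping in $\partial_B$ survives the presence of a Laurent subalgebra at some $r(f)$ — is the step that demands the most care.
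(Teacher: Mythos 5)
Your base case and your degree-one case are sound, and they reproduce the trick from Proposition~\ref{criacaballar} (an unused edge out of the infinite emitter, plus MT3). But the inductive step contains a genuine gap, which you name yourself without closing. Your induction is on $\partial_B(z)$ for $z\in Z(vL_\K(E)v)$, i.e.\ the statement under proof concerns only the distinguished vertex $v$. So when you decompose $z=kv+\sum_f f\xi_f f^*$ via Lemma~\ref{rap}, there is \emph{no} induction hypothesis applicable to the elements $\xi_f\in Z\left(r(f)L_\K(E)r(f)\right)$: they live in corners at vertices which need not be infinite emitters and may lie on cycles. Worse, the local statement you would need at those vertices is false: if $r(f)$ lies on a cycle $c$ without exits (the only case left open after Corollary~\ref{Billy}), then $r(f)L_\K(E)r(f)=\K[c,c^*]$ is commutative, so $Z\left(r(f)L_\K(E)r(f)\right)\cong\K[x,x^{-1}]$ is strictly larger than $\K\, r(f)$. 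Hence ``$\xi_f=k_f\, r(f)$'' can never follow from any property of the corner at $r(f)$ alone; it must be extracted from the global centrality of $z$ in $vL_\K(E)v$, and your proposal only gestures at this (``the commutation relations force the cycle-component to vanish'') without giving the argument. That missing argument is not a technical detail to be checked; it is the entire content of the proposition.

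The paper's proof avoids induction altogether, and this is worth internalizing as the fix. It writes $z$ in normal form $z=kv+\sum_{i=1}^n k_i\alpha_i\beta_i^*$, where the hypothesis that $v$ is not the base of a cycle guarantees that every $\alpha_i$ and every $\beta_i$ is a real path of length at least $1$ with source $v$. Since $v$ is an infinite emitter, there is an edge $f_j\in s^{-1}(v)$ with $\beta_i^*f_j=0$ for all $i$; among the indices with $k_i\ne 0$ one chooses $i_0$ with $|\alpha_{i_0}|$ maximal; MT3 provides paths $\lambda,\mu$ joining $r(f_j)$ and $r(\alpha_{i_0})$ to a common vertex; and commuting $z$ with $\eta:=f_j\lambda\mu^*\alpha_{i_0}^*$ gives $z\eta=k\eta$, while $\eta z=k\eta+\sum_i k_i f_j\lambda\mu^*\gamma_i^*\beta_i^*$ (the sum over those $i$ with $\alpha_{i_0}=\alpha_i\gamma_i$), and linear independence of ghost paths then forces $k_{i_0}=0$, a contradiction. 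The choice of a term of \emph{maximal length} is exactly what replaces the degree bookkeeping your induction cannot supply, and it handles uniformly the branches of $T(v)$ that run into no-exit cycles. If you want to salvage your outline, drop the recursion through Lemma~\ref{rap} and run this one-shot argument directly on the path-form normal expression of $z$.
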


 \begin{proof}
If we write $z$ in normal form relative to a basis $B$, then $z=kv+\sum_{i=1}^n k_i\a_i\b_i^*$ where $k, k_i\in K$ and the $\a_i$'s and the $\b_i$'s are real
paths of length $\ge 1$ whose source is $v$ and $r(\alpha_i)=r(\b_i)$ (recall that $v$ is not base of a cycle). Write also $s^{-1}(v)=\{f_j\}_{j\in J}$. Since $s^{-1}(v)$ is
infinite we can select $f_j\in s^{-1}(v)$ such that $\b_i^*f_j=0$ for  $i=1,\ldots,n$. 

Assume that some $k_i$ in the expression of $z$ is not zero. For the nonzero scalars $k_i$
select one of the $\a_i$'s of maximal length.
So we fix $i_0$ such that $\hbox{length}(\a_{i_0})\ge\hbox{length}(\a_i)$ for any $i$. Then $k_{i_0}\ne 0$ (this is important because we will get a contradiction to this in due course).
Since the graph satisfies MT3 there are paths $\lambda$ and $\mu$ such that
$s(\lambda)=r(f_j)$, $s(\mu)=r(\a_{i_0})$ and $r(\lambda)=r(\mu)$. So $\eta:=f_j\lambda\mu^*\a_{i_0}^*\in vLv$ hence $z$ commutes with $\eta$. But $z\eta=k\eta+\sum_i k_i\a_i\b_i^*f_j\lambda\mu^*\a_{i_0}^*=k\eta $ since $\b_i^*f_j=0$ for any $i$. On the other hand, 
$\eta z=k\eta+\sum_i k_i f_j\lambda\mu^*\a_{i_0}^*\a_i\b_i^*$ and given that the length of each $\a_i$ is less than or equal to the length of $\a_{i_0}$, we can write $\a_{i_0}=\a_{i}\gamma_i$ (otherwise $\a_{i_0}^*\a_i=0$). So 
$\eta z=k\eta +\sum_i k_i f_j\lambda\mu^*(\a_{i}\gamma_i)^*\a_i\b_i^*=k\eta +\sum_i k_i f_j\lambda\mu^*\gamma_i^*\b_i^*$. Consequently  $\sum_i k_i f_j\lambda\mu^*\gamma_i^*\b_i^*=0$ and observe that there is at least one 
nonzero $f_j\lambda\mu^*\gamma_i^*\b_i^*$, precisely  $f_j\lambda\mu^*\gamma_{i_0}^*\b_{i_0}^*$ (because if this element is zero then $f_j\lambda=0$ which is a contradicton). So 
$$0=\sum_i k_i f_j\lambda\mu^*\gamma_i^*\b_i^*=(f_j\lambda\mu^*)\sum_i k_i \gamma_i^*\b_i^*$$ 
thus $0=\lambda^* f_j^* (f_j\lambda\mu^*)\sum_i k_i \gamma_i^*\b_i^*= \sum_i k_i \mu^*\gamma_i^*\b_i^*$ implying $k_i=0$
because any collections of real or ghost paths is linearly independent. So far we have proved that 
$k_i=0$ if $\a_{i_0}=\a_i\gamma_i$, but this is the case precisely for $i_0$. Thus $k_{i_0}=0$ a contradiction.
\end{proof}

\begin{corollary}\label{infinite} Under the conditions of the previous proposition we have that $ \clz(L_\K(E))\cong \K$.

\end{corollary}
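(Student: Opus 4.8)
The plan is to deduce Corollary~\ref{infinite} directly from Proposition~\ref{infinite1} by invoking the isomorphism established in Proposition~\ref{Paul}. First I would note that the hypotheses of the corollary are exactly those of the previous proposition: $E$ satisfies Condition (MT3), so $L_\K(E)$ is prime, and there is an infinite emitter $v\in E^0$ which is not the base of a cycle. Since $L_\K(E)$ is prime, Proposition~\ref{Paul} applies and gives that the evaluation map $\Omega\colon\clz(L_\K(E))\to C_v$, $\Omega(\tau)=\tau(v)$, is an algebra isomorphism. Thus it suffices to identify the algebra $C_v$.

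Next I would compute $C_v$ using Proposition~\ref{infinite1}. By definition $C_v=\{\tau(v)\colon\tau\in\clz(L_\K(E))\}$, and by Lemma~\ref{peassomulo}(i) every such $\tau(v)$ lies in $Z\left(vL_\K(E)v\right)$. Proposition~\ref{infinite1} then tells us that any $z\in Z\left(vL_\K(E)v\right)$ is of the form $kv$ for some $k\in\K$; in particular each $\tau(v)$ is a scalar multiple of $v$, so $C_v\subseteq\K v$. Conversely, the identity map is a centralizer, so $v=1_{L_\K(E)}(v)\in C_v$ and hence $\K v\subseteq C_v$. Therefore $C_v=\K v$, which is clearly isomorphic to $\K$ as a $\K$-algebra.

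Combining these two steps, I would conclude
\begin{equation*}
\clz(L_\K(E))\cong C_v=\K v\cong\K,
\end{equation*}
which is the assertion of the corollary. There is essentially no obstacle here: all the substantive work has already been carried out in Proposition~\ref{infinite1} (the explicit normal-form computation forcing the higher-degree coefficients to vanish) and in Proposition~\ref{Paul} (the identification of the centroid with the corner algebra $C_v$). The corollary is a short bookkeeping step that assembles these two results, the only points worth stating explicitly being that $v$ itself witnesses $\K v\subseteq C_v$ and that $\K v\cong\K$ as $\K$-algebras.
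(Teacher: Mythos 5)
Your proof is correct and follows essentially the same route as the paper: identify $\clz(L_\K(E))$ with $C_v$ via Proposition~\ref{Paul}, then use Lemma~\ref{peassomulo}(i) and Proposition~\ref{infinite1} to see $C_v\subseteq\K v$. The only difference is that you spell out the reverse inclusion $\K v\subseteq C_v$ (via the identity centralizer), a detail the paper leaves implicit.
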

\begin{proof}
By Proposition~\ref{Paul} we have that $ \clz(L_\K(E))\cong C_v$, where $v$ can be choosen as the infinite emitter of Proposition~\ref{infinite1}. Since every element of $C_v$ is in the center of $v L_\K(E)v$ the results follows.
\end{proof}

So our task now is to consider row-finite prime Leavitt path algebras $A=L_\K(E)$ such that $E$ has a unique cycle with no exits $c$ and such that $E$ is not a comet. We give an example of this type of graph below.

\[
\xygraph{
!{<0cm,0cm>;<1cm,0cm>:<0cm,1cm>::}
!{(0,.3)}*+{u}
!{(1.9,-1)}*+{v}
!{(-1,0)}*+{\cdots}="a"
!{(0,0)}*+{\bullet}="b"
!{(1,0)}*+{\bullet}="c"
!{(2,0)}*+{\bullet}="d"
!{(3,0)}*+{\bullet}="g"
!{(4,0)}*+{\cdots}="e"
!{(1.5,-.9)}*+{\bullet}="f"
"a":"b" "b":"c" "c":"d"
"d":"g" "g":"e"
"b":"f" "c":"f" "d":"f" "g":"f"
"f" :@(dl,dr) "f"
}\]

Before we characterize the centroid of $A$ we need a lemma.



\begin{lemma}\label{todosLosCaminos}

 Let $E$ be an arbitrary graph, $H$ a hereditary set, and $v$ a vertex in the hereditary and saturated closure of $H$, but not in $H$. Then $$\Pi_{v,H}:=\{\lambda=f_1\cdots f_n\in\path(E)\vert s(\lambda)=v, r(\lambda)\in H, s(f_n)\notin H\}$$ is finite and hence we can write \begin{equation}\label{beepbeep} v=\sum_{\alpha\in\Pi_{v,H}}\alpha\alpha^\ast.
\end{equation}
\end{lemma}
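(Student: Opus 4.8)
Let $E$ be an arbitrary graph, $H$ a hereditary subset, and $v$ a vertex in $\overline{H}$ (the hereditary and saturated closure) but $v \notin H$. Define
$$\Pi_{v,H} = \{\lambda = f_1\cdots f_n \in \mathrm{Path}(E) \mid s(\lambda) = v,\ r(\lambda) \in H,\ s(f_n) \notin H\}.$$
We must show $\Pi_{v,H}$ is finite and that $v = \sum_{\alpha \in \Pi_{v,H}} \alpha\alpha^*$.

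The plan is to induct on the level at which $v$ enters the hereditary and saturated closure, using the filtration $\overline{H}=\bigcup_{n\ge 0}\Lambda^n$ recalled in Section~2. First I would observe that since $H$ is hereditary, $\Lambda^0=T(H)=H$; hence $v\in\overline{H}\setminus H$ means there is a minimal integer $N\ge 1$ with $v\in\Lambda^N$, and I will argue by induction on this $N$. The base case $N=1$ is the instance in which $v$ is regular with $r(s^{-1}(v))\subseteq\Lambda^0=H$, so writing $s^{-1}(v)=\{g_1,\dots,g_k\}$ one checks directly that $\Pi_{v,H}=\{g_1,\dots,g_k\}$ and $v=\sum_i g_ig_i^*$ by (CK2).

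For the key structural observation I would record that every $\lambda=f_1\cdots f_n\in\Pi_{v,H}$ is a \emph{first-passage} path into $H$: if some intermediate range $r(f_j)$ with $j<n$ were in $H$, hereditariness of $H$ would force $s(f_n)=r(f_{n-1})\in H$, contradicting the defining condition $s(f_n)\notin H$. Thus $s(f_i)\notin H$ for all $i\le n$, and only $r(f_n)$ lies in $H$. This is the fact that controls how paths in $\Pi_{v,H}$ can be cut at their first edge.

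In the inductive step, since $v\in\Lambda^N\setminus\Lambda^{N-1}$, the definition of $\Lambda^N$ forces $v$ to be a regular vertex with $r(s^{-1}(v))\subseteq\Lambda^{N-1}$; write $s^{-1}(v)=\{g_1,\dots,g_k\}$ (finite and nonempty) and $w_i:=r(g_i)\in\Lambda^{N-1}$. Decomposing each $\lambda\in\Pi_{v,H}$ according to its first edge $f_1=g_i$, I would establish the disjoint decomposition
\[\Pi_{v,H}=\{g_i:\ w_i\in H\}\ \sqcup\ \bigsqcup_{i:\,w_i\notin H}\{g_i\beta:\ \beta\in\Pi_{w_i,H}\},\]
where the first-passage property guarantees that $w_i\in H$ forces $\lambda=g_i$ of length one, while $w_i\notin H$ forces the tail $\beta=f_2\cdots f_n$ to be a first-passage path in $\Pi_{w_i,H}$. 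For $w_i\notin H$ we have $w_i\in\overline{H}\setminus H$ of level $\le N-1$, so the induction hypothesis gives both finiteness of $\Pi_{w_i,H}$ (hence of $\Pi_{v,H}$) and the expansion $w_i=\sum_{\beta\in\Pi_{w_i,H}}\beta\beta^*$.

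Finally I would assemble the identity. Using $(g_i\beta)(g_i\beta)^*=g_i\beta\beta^*g_i^*$ together with $g_iw_ig_i^*=g_ig_i^*$ (since $g_iw_i=g_i$), the decomposition yields
\[\sum_{\alpha\in\Pi_{v,H}}\alpha\alpha^*=\sum_{i:\,w_i\in H}g_ig_i^*+\sum_{i:\,w_i\notin H}g_i\Big(\sum_{\beta}\beta\beta^*\Big)g_i^*=\sum_{i=1}^k g_ig_i^*=v,\]
the last equality being (CK2). The main obstacle I anticipate is the careful bookkeeping in the decomposition of $\Pi_{v,H}$: one must verify that no longer path survives through an edge $g_i$ with $w_i\in H$, and that the tail of a genuine first-passage path is again a first-passage path at a strictly lower level. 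Both of these rest entirely on the hereditary first-passage observation of the second paragraph rather than on any deep computation, so once that observation is in place the rest is routine.
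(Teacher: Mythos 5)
Your proof is correct and follows essentially the same route as the paper: induction along the filtration $\Lambda^n$ of the hereditary and saturated closure, splitting the edges leaving $v$ according to whether their ranges lie in $H$, and applying (CK2). The only difference is that where the paper defers the identity $v=\sum_{\alpha\in\Pi_{v,H}}\alpha\alpha^*$ to \cite[Lemma 1.2]{CMMSS}, you derive it directly from your first-passage decomposition of $\Pi_{v,H}$, which makes the argument self-contained.
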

\begin{proof}

We give a proof by induction: since $\overline{H}=\cup_{n\in\N }\Lambda^n(H)$ and $v\in\overline{H}\setminus H$, if $v\in\Lambda^1(H)$ then
$s^{-1}(v)=\{f_1,\ldots,f_n\}$ is finite because $v$ is a regular vertex.
Thus the number of paths from $v$ to $H$ in this case is $n$ and Equation~\ref{beepbeep} clearly holds.
Now assume that for $w\in\Lambda^n(H) \setminus H$ the set $\Pi_{w,H}$ is finite and $w=\sum_{\alpha\in\Pi_{w,H}}\alpha\alpha^\ast$.
Taking $v\in\Lambda^{n+1}(H)\setminus H$ we have that $r(s^{-1}(v))$ is
a finite subset of $\Lambda^n(H)$. Writing $r(s^{-1}(v))=\{u_1,\ldots,u_q,\ldots, u_k\}$ we may assume that $u_1,\ldots,u_q\in H$ while  $u_{q+1},\ldots,u_k\notin H$.
Then we can apply the induction hypothesis to each of $u_{q+1},\ldots, u_k$ and the final conclusion is that $\Pi_{v,H}$ is finite. 
Proceeding as in the proof of \cite[Lemma 1.2]{CMMSS} we conclude that
\begin{equation*} v=\sum_{\alpha\in\Pi_{v,H}}\alpha\alpha^\ast.
\end{equation*}
\end{proof}

To prepare for our next result, let 
$E$ be a row-finite graph that has a unique cycle $c$; assume that it has no exits and that $E$ is not a comet. For $v\notin H:=\overline{c^0}$ 
define $$\Gamma_1(v):=\{f\in E^1 \ \vert \ s(f)=v, r(f)\notin H\}, \text{ and }$$
$$\Gamma_2(v):=\{g\in E^1 \ \vert \ s(g)=v, r(g)\in H\}.$$

\begin{lemma}\label{carita}
Let $A=L_\K(E)$ be a prime Leavitt path algebra such that $E$ has a unique cycle $c$; assume that it has no exits and that $E$ is not a comet. Let $\tau \in \clz(A)$. For $v\notin H:=\overline{c^0}$ write
$\tau(v)=kv+\sum_{f\in\Gamma_1(v)} f\xi_f f^*+\sum_{g\in\Gamma_2(v)}g\xi_g g^*$ as in Lemma~\ref{rap}. If $\xi_f=0$, for some $f\in \Gamma_1(v)$, then $\xi_g=0$ for every $g\in \Gamma_2(v)$.
\end{lemma}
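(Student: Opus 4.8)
The plan is to translate the vanishing of the $\xi$'s into statements about $\tau$ evaluated at the ranges of edges, and then exploit the no-exit hypothesis together with downward directedness (Condition (MT3), which holds because $A$ is prime). First I would record that, for any edge $e\in s^{-1}(v)$, one has $r(e)=e^\ast e$ and $e=ve$, whence $\tau(r(e))=\tau(e^\ast e)=e^\ast\tau(e)=e^\ast\tau(v)e$. Expanding $\tau(v)=kv+\sum_{f}f\xi_f f^\ast+\sum_{g}g\xi_g g^\ast$ and using (CK1) to kill every cross term $e^\ast f=\delta_{e,f}r(e)$, this collapses to $\tau(r(e))=k\,r(e)+\xi_e$. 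Hence the hypothesis $\xi_f=0$ is \emph{precisely} $\tau(r(f))=k\,r(f)$ with $r(f)\notin H$, while the desired conclusion $\xi_g=0$ is \emph{precisely} $\tau(r(g))=k\,r(g)$ for every $g\in\Gamma_2(v)$, where now $r(g)\in H$.

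Next I would use the geometry of the unique no-exit cycle $c$. Fix any vertex $p\in c^0$. Because $c$ has no exits, the only edge emitted by a cycle vertex is the next cycle edge, so $T(p)=c^0$ and consequently $\overline{\{p\}}=\overline{c^0}=H$. Since $A$ is prime, $E$ satisfies (MT3), so the vertices $r(f)$ and $p$ admit a common descendant $q$, i.e. $r(f)\ge q$ and $p\ge q$. The second relation forces $q\in T(p)=c^0$. This is the crucial point: it pulls the hypothesis concerning the vertex $r(f)$, which sits outside $H$, onto the cycle itself.

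To finish, I would feed these facts into Lemma~\ref{spesso}. From $\tau(r(f))=k\,r(f)$ and the path witnessing $r(f)\ge q$, part (i) of Lemma~\ref{spesso} yields $\tau(q)=kq$ with the same scalar $k$. Since $q\in c^0$ we again have $\overline{\{q\}}=H$, so $r(g)\in\overline{\{q\}}$ for every $g\in\Gamma_2(v)$; part (ii) of Lemma~\ref{spesso} then gives $\tau(r(g))=k\,r(g)$. By the first paragraph this is exactly $\xi_g=0$, which is the assertion.

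I expect the main obstacle to be the middle step: recognizing that the no-exit condition is exactly what forces the (MT3)-common descendant $q$ of $r(f)$ and a cycle vertex to land on the cycle, thereby bridging the information available outside $H$ (the hypothesis $\xi_f=0$) to the cycle vertices, and then observing that $\overline{\{q\}}=H$ so that the conclusion propagates back up through the saturation by means of Lemma~\ref{spesso}(ii). Once this connectivity picture is in place, the identity $\tau(r(e))=k\,r(e)+\xi_e$ and the two applications of Lemma~\ref{spesso} are routine, and in particular no explicit commutator computation in the corner $vAv$ is needed.
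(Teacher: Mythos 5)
Your proposal is correct, but it follows a genuinely different route from the paper's. The paper works by brute force inside the corner $vAv$: for each $h\in\Gamma_2(v)$ it builds a walk $\nu=f_0\lambda\mu^*h^*$ (with $\mu\in\Pi_{r(h),c^0}$, or simply $\nu=f_0\lambda h^*$ when $r(h)\in c^0$), imposes the commutation $\tau(v)\nu=\nu\tau(v)$, extracts $\mu^*\xi_h=0$, and then kills $\xi_h$ by summing over the decomposition $r(h)=\sum_{\mu\in\Pi_{r(h),c^0}}\mu\mu^*$ supplied by Lemma~\ref{todosLosCaminos}. You instead convert the vanishing of each $\xi_e$ into the vertex condition $\tau(r(e))=k\,r(e)$ via the corner identity $\tau(r(e))=e^*\tau(v)e=k\,r(e)+\xi_e$ --- which is exactly the computation appearing in the proof of Lemma~\ref{spesso}(iii) and again in Proposition~\ref{kfield} --- and then let Lemma~\ref{spesso} do all the propagation: Condition (MT3) together with the no-exit hypothesis forces a common descendant $q$ of $r(f_0)$ and a cycle vertex to lie on the cycle (since $T(p)=c^0$ for $p\in c^0$), part (i) pushes $\tau=k\,\cdot$ from $r(f_0)$ down to $q$, and since $\overline{\{q\}}=\overline{c^0}=H$, part (ii) pulls it back up to every $r(g)$ with $g\in\Gamma_2(v)$, which is precisely $\xi_g=0$. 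The two arguments rest on the same two underlying facts --- every vertex connects to the no-exit cycle, and vertices of $H\setminus c^0$ decompose as finite sums $\sum\mu\mu^*$ over paths into $c^0$ --- but in your version the second fact is hidden inside the proof of Lemma~\ref{spesso}(ii) rather than invoked explicitly, so you avoid both the case split on whether $r(g)\in c^0$ and any bespoke commutator manipulation; the price is that your argument leans on the earlier lemma where the paper's is self-contained at this point in the text. Both are equally rigorous.
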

\begin{proof}
Suppose that there exists $f_0\in \Gamma_1(v)$ such that $\xi_{f_0}=0$.

Given an arbitrary $h\in \Gamma_2(v)$, we know that $r(h)\in H$. We distinguish two possibilities:
\begin{enumerate}
    \item If $r(h)\in c^0$, since every vertex connects with $c^0$, there is a path $\l$ such that $s(\l)=r(f_0)$ and $r(\l)=r(h)$. Then defining $\nu:=f_0\l h^*$ and taking into account $\tau(v)\nu=\nu\tau(v)$ we get $\xi_h=0$.
    \item If $r(h)\notin c^0$, given any path $\mu\in\Pi_{r(h),c^0}$, since every vertex connects to $c$, there exists a path $\lambda$ such that $s(\lambda)= r(f_0)$,  and $r(\lambda)=r(\mu)$. Let $\nu:=f_0\lambda\mu^\ast h^\ast$. Then

$$k\nu = \tau(v)\nu=\nu\tau(v)= k\nu + f_0\lambda\mu^\ast\xi_hh^\ast.$$
This implies $\mu^\ast\xi_h=0$ and, consequently, $\mu\mu^\ast\xi_h=0$. Note that $\mu$ is an arbitrary path in $\Pi_{r(h),c^0}$. Since the graph is row finite, by Lemma \ref{todosLosCaminos}, we have that $r(h)=\sum_{\mu\in\Pi_{r(h),c^0}}\mu\mu^\ast$ and therefore
$$\xi_h=\sum_{\mu\in \Pi_{r(h),c^0}}\mu\mu^\ast\xi_h=0.$$
\end{enumerate}
\end{proof}

\begin{proposition}\label{kfield}
Let $A=L_\K(E)$ be a row-finite, prime Leavitt path algebra such that $E$ has a unique cycle $c$; assume that it has no exits and  that $E$ is not a comet. Then $\clz(A)\cong\K$.

\end{proposition}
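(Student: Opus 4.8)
The plan is to use Proposition~\ref{Paul} to reduce the computation of $\clz(A)$ to a single corner and then to run a descent on $\partial_B$ that the hypotheses force to remain off the cycle. Fix a vertex $u\in c^0$ and set $H:=\overline{c^0}$. I would first record two consequences of the standing hypotheses. Since $c$ has no exits we have $T(u)=c^0$, so Condition~(MT3) applied to an arbitrary $w$ and to $u$ yields a common descendant lying on $c$; hence $w\ge u$ for \emph{every} vertex $w$ (and in particular $E$ has no sinks). Secondly, because $E$ is not a comet one has $E^0\setminus H\neq\emptyset$: writing $H=\bigcup_n\Lambda^n(c^0)$, the $\Lambda$-level strictly decreases along any edge issuing from a vertex of $H\setminus c^0$, so if $H$ were all of $E^0$ then every infinite path would reach $c^0$ and $E$ would be a comet. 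By Proposition~\ref{Paul} it then suffices to show $\tau(u)\in\K u$ for every $\tau\in\clz(A)$, for then $\clz(A)\cong C_u=\K u\cong\K$.

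I would argue by contradiction, assuming $\tau(u)\notin\K u$. The key rigidity point is that, by Lemma~\ref{spesso}(i), if $\tau(w)=kw$ held for a single vertex $w$, then following a path $w\ge u$ would give $\tau(u)=ku\in\K u$. Thus the assumption propagates to \emph{all} vertices: $\tau(w)\notin\K w$, equivalently $\partial_B(\tau(w))\ge 1$, for every $w\in E^0$.

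Next I would construct an infinite strictly decreasing chain of these degrees, which is absurd. Start with any $w_0\in E^0\setminus H$. Given $w_i\notin H$, note that $w_i$ is regular and, $H$ being saturated, must emit at least one edge of $\Gamma_1(w_i)$ (an edge whose range lies outside $H$). Write $\tau(w_i)$ in the normal form of Lemma~\ref{rap} as in Lemma~\ref{carita}. If every coefficient $\xi_f$ with $f\in\Gamma_1(w_i)$ vanished, then choosing one such $f$ and invoking Lemma~\ref{carita} would force $\xi_g=0$ for all $g\in\Gamma_2(w_i)$ as well, whence $\tau(w_i)\in\K w_i$, contradicting the previous paragraph. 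Hence some $f\in\Gamma_1(w_i)$ genuinely appears in $\tau(w_i)$; put $w_{i+1}:=r(f)\in E^0\setminus H$. Since $w_i\notin c^0$ is not the base of a cycle and $\tau(w_i)\notin\K w_i$, Lemma~\ref{spesso}(iii) gives $\partial_B(\tau(w_{i+1}))<\partial_B(\tau(w_i))$. Iterating produces a strictly decreasing sequence of positive integers, the desired contradiction; therefore $\tau(u)\in\K u$ and $\clz(A)\cong\K$.

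The step I expect to be the main obstacle is guaranteeing that the descent can be kept inside $E^0\setminus H$. A priori the only edges occurring in $\tau(w_i)$ might all point into $H$, i.e.\ toward the cycle, where by Proposition~\ref{Desmond} the centralizer may legitimately be a nonconstant Laurent polynomial in $c$, so that no contradiction would arise. Lemma~\ref{carita} is precisely the device that forbids this: a single vanishing escaping coefficient would annihilate every cycle-ward coefficient, so a nonscalar $\tau(w_i)$ is compelled to exhibit an edge leaving $H$. The other delicate point is the translation of the non-comet hypothesis into the nonemptiness of $E^0\setminus H$, which is exactly the feature distinguishing this case from the comet case, where the centroid is instead $\K[x,x^{-1}]$.
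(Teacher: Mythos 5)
Your proof is correct and follows essentially the same route as the paper's: reduce to the corner $C_u$ via Proposition~\ref{Paul}, use the Lemma~\ref{rap}/Lemma~\ref{carita} decomposition at vertices outside $H=\overline{c^0}$ to force a nonvanishing coefficient $\xi_f$ on some edge $f\in\Gamma_1$, and reach a contradiction by infinite descent on $\partial_B$ along a path that never enters $H$. The differences are only organizational: the paper argues by dichotomy (some $\xi_f=0$, settled by Lemma~\ref{carita} and Proposition~\ref{Paul}, versus all $\xi_f\neq 0$, settled by the descent), whereas you run a single contradiction by first propagating nonscalarity of $\tau(u)$ to every vertex via Lemma~\ref{spesso}(i), and in doing so you make explicit two points the paper leaves implicit, namely that $E^0\setminus H\neq\emptyset$ when $E$ is not a comet and that the scalar behaviour found at a $\tau$-dependent vertex transfers back to the fixed cycle vertex $u$.
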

\begin{proof}
Let $\tau \in \clz(A)$, take $v\notin H=\overline{c^0}$ (so $\Gamma_1(v)\neq \emptyset$) and write $\tau(v)=kv+\sum_{f\in\Gamma_1(v)} f\xi_f f^*+\sum_{g\in\Gamma_2(v)}g\xi_g g^*$ as in Lemma~\ref{rap}. We then have the following dichotomy: 
\begin{enumerate}
    \item  $\xi_f\ne 0$ for all $f\in\Gamma_1(v)$, or
    \item There exists $f\in\Gamma_1(v)$ such that $\xi_f=0$. 
\end{enumerate}

In the second case above, by the previous lemma, we have that $\xi_g=0$ for all $g\in\Gamma_2(v)$. Let $f_1\in \Gamma_1(v)$ with $\xi_{f_1}=0$. Then 
$\tau(r(f_1))=f_1^* \tau(v)f_1=k r(f_1)$, and hence applying Proposition~\ref{Paul}, with $u=r(f_1)$, we have $\clz(A)\cong\K$.

  Let us analyze the possibility (1). If this happens,
we have again a dichotomy:
\begin{enumerate}
    \item[(a)] For all $v'\in T(v)$ the element $\tau(v')$ is in case (1) above.
    \item[(b)] $\exists \ v'\in T(v)$ such that
    $\tau(v')$ satisfies possibility (2) above.
\end{enumerate}

If Condition (b) above is satisfied then we proceed as in case (2) and obtain that $\clz(A)\cong\K$. So we are left with the alternative where for all $v'\in T(v)$ the element $\tau(v')$ is in case (1). Let $v_1=r(e_1)$ where $e_1\in \Gamma_1(v)$. Since $v_1\notin H$, $\Gamma_1(v_1)\neq \emptyset$. Proceed inductively: Once $e_n$ and $v_n$ are defined, let $v_{n+1}=r(e_{n+1})$ where $e_{n+1}\in \Gamma_1(v_{n})$. Notice that each $v_n$ falls under alternative (1) above (since $v_n\in T(v)$ for all $n$). Furthermore the infinite path $e_1 e_2 \ldots$ do not connect to $H$. Now, notice that $\partial (\tau(v_i)) > \partial (\tau(v_{i+1})$ for all $i$ (notice the strict inequality). Indeed, if $$\tau(v_i) = kv_i+\sum_{f\in\Gamma_1(v_i)} f\xi_f f^*+\sum_{g\in\Gamma_2(v_i)}g\xi_g g^*$$ then $$\tau(v_{i+1})= \tau(r(e_{i+1}))= \tau (e_{i+1}^* v_i e_{i+1})= e_{i+1}^* \tau(v_i) e_{i+1} = k v_{i+1} + \xi_{e_{i+1}}.$$ But then $\partial (\tau(v))$ has to be infinity, a contradiction.
This concludes the proof.
\end{proof}

\section{Main Theorem}

We summarize the results of our paper below.

\begin{theorem} Let $E$ be a graph, $L_\K(E)$ the associated Leavitt path algebra, and denote by $ \clz(L_\K(E))$ the centroid of $L_\K(E)$. Then

\begin{enumerate}
    \item If $L_\K(E)$ is simple, then $ \clz(L_\K(E))\cong \K$ (see Proposition~\ref{pasteldenata}).
    \item If $L_\K(E)$ is row-finite, graded simple and  non-simple, then the graph $E$ is a comet and $ \clz(L_\K(E))\cong\K[x, x^{-1}]$ (see Proposition~\ref{PamDeQueijo}). 
    \item If $L_\K(E)$ is prime and $E$ is acyclic, then $ \clz(L_\K(E))\cong \K$ (see Proposition~\ref{sol}).
    
    \item If $L_\K(E)$ is prime and there is a cycle with exits in $E$, then $ \clz(L_\K(E))\cong \K$ (see Corollary~\ref{Billy}). 
    
    \item If $L_\K(E)$ is prime and there is an infinite emitter in $E$ which is not the base of a cycle, then $ \clz(L_\K(E))\cong \K$ (see Corollary~\ref{infinite}).
    
    \item Suppose that $A=L_\K(E)$ is a row-finite prime Leavitt path algebra such that $E$ has a unique cycle (and this cycle has no exits). If $E$ is not a comet then $\clz(A)\cong\K$ (see Proposition~\ref{kfield} ), and if $E$ is a comet then $\clz(A)\cong \K[x, x^{-1}]$ (see Proposition~\ref{PamDeQueijo}). 
    
\end{enumerate}

\end{theorem}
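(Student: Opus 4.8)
The plan is to recognize that this theorem is a consolidation of the case-by-case computations carried out in Sections 3--5, so that the proof consists of two tasks: citing the appropriate earlier result for each of the six items, and verifying that the listed hypotheses genuinely exhaust all the graphs under consideration. Items (1) and (2) are nothing but Proposition~\ref{pasteldenata} and Proposition~\ref{PamDeQueijo}, so for these I would simply invoke the corresponding statements. Likewise, items (3), (4) and (5) restate Proposition~\ref{sol}, Corollary~\ref{Billy} and Corollary~\ref{infinite}, while item (6) combines Proposition~\ref{kfield} for the non-comet subcase with Proposition~\ref{gorrona} (equivalently the comet branch of Proposition~\ref{PamDeQueijo}) for the comet subcase. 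Thus the substantive work has already been done, and what remains is organizational.

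The genuinely structural part of the argument is the verification that the hypotheses in (1)--(6) cover every relevant graph, and for this I would follow the decision tree displayed in Figure~\ref{rfvfr}. A simple algebra falls under (1). For a prime but non-simple algebra I would use that primeness is equivalent to Condition~(MT3), and split first according to whether $E$ is acyclic, which yields (3), or carries at least one cycle. In the cyclic case I would split again according to whether some cycle has an exit, which yields (4), or every cycle is exit-free; under primeness the latter alternative forces a \emph{unique} cycle $c$ with no exits, exactly as recalled in the discussion following Corollary~\ref{Billy}.

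The subtle point, and the step I expect to require the most care, is the remaining branch in which the unique cycle $c$ is exit-free. Here I must argue that hypotheses (5) and (6) between them exhaust all possibilities. If $E$ is row-finite, then (6) applies directly, separating the comet and non-comet subcases. If $E$ is not row-finite, then there exists an infinite emitter $v$, and the key observation is that $v$ cannot lie on $c$: a vertex of $c$ emitting infinitely many edges would provide an edge distinct from the one used by $c$ at that vertex, hence an exit for $c$, contradicting that $c$ has no exits. Since $c$ is the only cycle, $v$ is therefore not the base of any cycle, and item (5) applies. Assembling these observations confirms that the case analysis is complete and that each branch returns precisely the centroid announced in the statement, which finishes the proof.
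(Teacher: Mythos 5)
Your proposal is correct and follows essentially the same route as the paper: the enumerated items are established by citing Propositions~\ref{pasteldenata}, \ref{PamDeQueijo}, \ref{sol}, \ref{kfield} and Corollaries~\ref{Billy}, \ref{infinite}, and the exhaustiveness check via the decision tree of Figure~\ref{rfvfr} is exactly how the paper proves its compact version of the theorem. Your only addition is to spell out why an infinite emitter cannot lie on the exit-free cycle (it would create an exit), a point the paper asserts without justification, so if anything your write-up is slightly more complete.
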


A more compact form of the above statement is the following:

\begin{theorem}
Let $E$ be a graph, $L_\K(E)$ the associated Leavitt path algebra, and denote by $ \clz(L_\K(E))$ the centroid of $L_\K(E)$. If $L_\K(E)$ is prime, then $ \clz(L_\K(E))\cong \K$ except if $E$ is a row-finite comet, in which case $ \clz(L_\K(E))\cong \K[x,x^{-1}]$. 
\end{theorem}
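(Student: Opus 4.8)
The plan is to treat this compact statement as a corollary of the case-by-case analysis already carried out, organized along the decision tree of Figure~\ref{rfvfr}. First I would invoke the characterization of primeness: $\clz$ aside, $L_\K(E)$ is prime exactly when $E$ satisfies Condition~(MT3) (see \cite{Gene}, \cite{BLR}), so throughout one may assume $E$ is downward directed. The entire task then reduces to walking through the leaves of the tree and quoting the corresponding result, checking finally that every leaf returns $\K$ except the single leaf ``row-finite comet'', which returns $\K[x,x^{-1}]$.

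Concretely, I would first split on whether $E$ has a cycle. If $E$ is acyclic, Proposition~\ref{sol} gives $\clz(\LPA)\cong\K$. If $E$ has at least one cycle, I would split on the existence of an exit: if some cycle has an exit, Corollary~\ref{Billy} yields $\clz(\LPA)\cong\K$, covering the row-finite and non-row-finite situations at once. Otherwise no cycle has an exit, and primeness forces this exit-free cycle to be \emph{unique} (as observed in the paragraph following Corollary~\ref{Billy}). Here I would split once more on row-finiteness. In the non-row-finite subcase, the base of an exit-free cycle emits a single edge and hence is not an infinite emitter; since $E$ is non-row-finite some infinite emitter exists, and as the only cycle is exit-free this emitter is not the base of any cycle, so Proposition~\ref{infinite1} and Corollary~\ref{infinite} give $\clz(\LPA)\cong\K$. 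In the row-finite subcase I would distinguish comet from non-comet: Proposition~\ref{kfield} handles the non-comet case with $\clz(\LPA)\cong\K$, and Proposition~\ref{PamDeQueijo} (equivalently Proposition~\ref{gorrona}) handles the comet case with $\clz(\LPA)\cong\K[x,x^{-1}]$. Collecting the leaves, the value $\K[x,x^{-1}]$ arises precisely from the row-finite comet, and all other prime graphs yield $\K$.

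The hard part will not be computational but structural: one must verify that the dichotomies are genuinely exhaustive and mutually exclusive, so that the leaves partition all prime graphs and the assignment of outputs is unambiguous. Two points deserve explicit care. First, that in a prime graph an exit-free cycle is unique, which places every ``no exit'' situation squarely in the unique-cycle branch. Second, that a comet's unique cycle automatically has \emph{no} exits: an exit at a vertex of $c^0$ would, using that every vertex connects to $c^0$ together with the ``every infinite path ends in $c$'' condition, produce a closed path off the cycle and hence a second cycle, contradicting the uniqueness built into the definition of comet. This guarantees that comets lie entirely inside the ``unique exit-free cycle'' branch and never collide with the ``cycle with exits'' branch. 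I would also flag that a \emph{non}-row-finite comet does not escape to $\K[x,x^{-1}]$: being non-row-finite it contains an infinite emitter off the cycle, so it is absorbed by Corollary~\ref{infinite} and still returns $\K$, in accordance with the worked example preceding Lemma~\ref{todosLosCaminos}. Once these compatibility checks are in place, the statement follows immediately.
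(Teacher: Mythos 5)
Your proposal is correct and takes essentially the same route as the paper's own proof: the identical decision-tree case analysis (acyclic via Proposition~\ref{sol}; cycle with exits via Corollary~\ref{Billy}; unique exit-free cycle, then infinite emitter via Corollary~\ref{infinite}, row-finite non-comet via Proposition~\ref{kfield}, and row-finite comet via Proposition~\ref{gorrona}/Proposition~\ref{PamDeQueijo}). Your extra verifications --- that a comet's unique cycle is necessarily exit-free and that an exit-free cycle in a prime graph is unique, so the leaves are exhaustive and mutually exclusive --- are sound refinements of points the paper leaves implicit, but they do not alter the argument.
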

\begin{proof}
If $E$ is acyclic, then $ \clz(L_\K(E))\cong \K$ by Proposition~\ref{sol}. Otherwise $E$ has cycles. If there are  cycles with exits,  then $ \clz(L_\K(E))\cong \K$ applying Corollary~\ref{Billy}. So, we consider the case that $E$ has a unique cycle and it has no exit. If there is an infinite emitter then this is not the base of the cycle and, applying Corollary~\ref{infinite}, we have again that $ \clz(L_\K(E))\cong \K$. Therefore we assume in the sequel that $E$ is row finite. If $E$ is a comet, then 
$ \clz(L_\K(E))\cong \K[x,x^{-1}]$ by the proof of Proposition~\ref{PamDeQueijo} and, if $E$ is not a comet, then $ \clz(L_\K(E))\cong \K$ by Proposition~\ref{kfield}.

\end{proof}

\bibliographystyle{amsplain}

\end{document}